\newtheorem{thm}{Theorem}[section]
\newtheorem{cor}[thm]{Corollary}
\newtheorem{lem}[thm]{Lemma}
\newtheorem{prop}[thm]{Proposition}
\theoremstyle{definition}
\newtheorem{defin}[thm]{Definition}
\newtheorem{fact}[thm]{Fact}
\newtheorem{rem}[thm]{Remark}
\newtheorem{example}[thm]{Example}
\numberwithin{equation}{section}
\newcommand{\bZ}{\mathbb{Z}}
\newcommand{\bR}{\mathbb{R}}
\newcommand{\lin}{\operatorname{span}}
\newcommand{\supp}{\operatorname{supp}}
\newcommand{\ccE}{\mathscr{E}}
\begin{document}
\title{An algebraic characterization of B-Splines}
\author[A. Kamont]{Anna Kamont}
\address{ Institute of Mathematics, Polish Academy of Sciences, ul. Abrahama 18, 81-825 Sopot, Poland }
\email{ Anna.Kamont@impan.pl }

\author[M. Passenbrunner]{Markus Passenbrunner}
\address{Institute of Analysis, Johannes Kepler University Linz, Austria, 4040 Linz, Alten\-berger Strasse 69}
\email{markus.passenbrunner@jku.at}


\keywords{Functional equations, B-splines}
\subjclass[2020]{39B22, 41A15}

\begin{abstract}
	B-splines of order $k$ can be viewed as a mapping $N$ taking a $(k+1)$-tuple of
	increasing real numbers $a_0 < \cdots < a_k$ and
	giving as a result a certain piecewise polynomial function. Looking at
	this mapping $N$ as a whole, basic properties of B-spline functions imply that it has
	the following algebraic properties: (1) $N(a_0,\ldots,a_k)$ has
	local support; (2) $N(a_0,\ldots,a_k)$ allows refinement, i.e. 
	for every $a\in \cup_{j=0}^{k-1} (a_j,a_{j+1})$ we have that if  
		$(\alpha_0,\ldots,\alpha_{k+1})$ is the increasing rearrangement
		of the points $\{a_0,\ldots,a_k,a\}$, the 'old' function
		$N(a_0,\ldots,a_k)$ is a linear combination of the 'new'
		functions $N(\alpha_0,\ldots,\alpha_k)$ and
		$N(\alpha_1,\ldots,\alpha_{k+1})$;
	(3) $N$ is translation and dilation invariant. It is easy to see that
	derivatives of $N(a_0,\ldots,a_k)$ satisfy properties (1)--(3) as well.
	
	In this paper we investigate if properties (1)--(3) are already sufficient to
	characterize B-splines and their derivatives. 
\end{abstract}

\maketitle

\section{Introduction}
For a positive integer $k$, let $a_0 < \cdots < a_k$ be an increasing sequence of real numbers. 
Recall the definition of B-spline functions.
The B-spline function $N(a_0, a_1 ; \cdot)$ of order~$1$ with respect to
the points $\{a_0,a_1\}$ is given by the characteristic
function of the interval $(a_0,a_1]$ and inductively, the B-spline function of
order $k$ with respect to the points $\{a_0,\ldots,a_k\}$ is 
\begin{equation}\label{eq:spline_rec}
	N(a_0,\ldots, a_k ; t) = \frac{ t - a_0 }{a_{k-1} - a_0}
	N(a_0,\ldots,a_{k-1} ; t) + \frac{ a_k - t}{a_k  -a_1}
	N(a_1,\ldots,a_{k};t).
\end{equation}
Then, the functions $N(a_0,\ldots,a_k) = N(a_0,\ldots,a_k ; \cdot)$ are
piecewise polynomials of order $k$ (degree $k-1$). 
If the distances $a_{j+1} - a_{j}$ between the points $\{a_0,\ldots,a_k\}$ are all equal,
$N(a_0,\ldots,a_k)$ is a \emph{cardinal} B-spline.

B-spline functions $N(a_0,\ldots,a_k)$ have the following algebraic properties.
\begin{enumerate}
	\item the support of $N(a_0,\ldots,a_k)$ equals $[a_0,a_k]$. 
	\item for every $a\in \cup_{j=0}^{k-1} (a_j,a_{j+1})$ we have that if  
		$(\alpha_0,\ldots,\alpha_{k+1})$ is the increasing rearrangement
		of the points $\{a_0,\ldots,a_k,a\}$,
		\[
			N(a_0,\ldots,a_k) \in \lin\{
				N(\alpha_0,\ldots,\alpha_k),
				N(\alpha_1,\ldots,\alpha_{k+1}) \}.
		\]

	\item Translation invariance 
		\[
			N(a_0 + \tau, \cdots, a_k+\tau)  \in \lin\{ N(a_0,\ldots,a_k;
			\cdot-\tau)\},\qquad \tau\in\mathbb R.
		\]
	\item Dilation invariance
		\[
			N(\delta a_0,\delta a_1,\ldots,\delta a_k) \in \lin  
			\{N(a_0,a_1,\ldots,a_k; \cdot/\delta)\},\qquad \delta >
			0.
		\]
\end{enumerate}
In fact, for the setting $N$ of B-splines as in \eqref{eq:spline_rec}, we actually
have equality in items (3) and (4). For more information on B-spline functions, see
e.g. \cite{Schumaker2007}.
We put $D_k = \{ (a_0,\ldots,a_k) \in \mathbb R^{k+1} : a_0 < \cdots < a_k\}$
and view the B-splines of order $k$ as a mapping $N : D_k \to \mathscr E'(\mathbb R)$, 
where $\mathscr E'(U)$ denotes the space of compactly supported distributions on
the open subset $U$ of $\mathbb R$, which is the dual of the space $\mathscr
E(U)$ of infinitely differentiable functions on $U$. 
Observe that the above conditions (1)--(4) make sense in $\mathscr E'(\mathbb
R)$ as well.
It is easily seen that distributional derivatives of
$N(a_0,\ldots,a_k)$ also satisfy (1)--(4).
In addition to properties (1)--(4), $N:D_k\to\mathscr E'(\mathbb R)$ is also a continuous mapping.
For notational purposes, define the B-spline $N(a)$ of order
$0$ at $a\in\mathbb R$ to be the Dirac distribution $\delta_a$ at point~$a$.

In this article, we show the distinctiveness of B-spline functions
\eqref{eq:spline_rec} by proving 
that the algebraic properties (1)--(4) of a continuous, non-vanishing mapping
$F: D_k \to
\mathscr E'(\mathbb R)$ in fact
characterize B-spline functions, i.e. for all $a_0 < \cdots < a_k$ we have that 
$F(a_0,\ldots,a_k)$ already is (a non-zero constant multiple of) the B-spline function
corresponding to the points $(a_0,\ldots,a_k)$ or some distributional derivative
thereof.
In other words, B-splines are characterized by their localization property (1),
their possibility of refinement (2) and their invariance (3), (4) with respect
to the two group
structures $+$ and $\cdot$ on $\mathbb R$ respectively.

In fact, we even show generalizations of this characterization in the following
sense. 
Firstly, we need not assume continuity of the mapping $F:D_k \to\mathscr
E'(\mathbb R)$ if we restrict ourselves to the dense subset $P_k\subset
D_k$ of tuples with rational coordinates. In particular, our main Theorems \ref{thm:main_large} and
\ref{thm:main_small} do not require continuity of $F$ as an assumption.
Therefore our characterization of B-splines by conditions (1)--(4) is
purely algebraic.
Secondly, instead of the support condition in  (1) we only assume its weaker version that
the support of $F(a_0,\ldots,a_k)$ is a \emph{subset} of $[a_0,a_k]$. 

Our motivation for considering this problem is twofold. Firstly, if we restrict
conditions (1)--(4) to equidistant partitions, mappings satisfying those
conditions are related to totally refinable distributions (see
Definition~\ref{defn:totally}), for which there are 
existing results about their form, cf. \cite{refinable_approx,  refinable_east, refinable_anal}.
Our results can be seen as an extension of those investigations to arbitrary
partitions.
Secondly, there are results 
about convergence properties of orthogonal projections onto an increasing
sequence of spline spaces  with respect to Lebesgue measure 
that are true for any sequence of partitions, cf. \cite{shadrin,uncond_franklin, ae,
uncond}.
Those results require as a crucial ingredient
that conditions (1) and (2) are true for general partitions. If we additionally assume that our
mappings $F$ are invariant with respect to the same group structures than
the Lebesgue measure, we arrive at mappings $F$ satisfying (1)--(4) and the 
results in this article show that in some sense, B-splines are uniquely
determined by those conditions.

The article is organized as follows. In Section~2 we describe our main results.
In Section~3 we investigate what can be said about a mapping $F$ satisfying
(1)--(4) by purely looking at equidistant partitions and using results about
totally refinable distributions.  In Section~4 we look at
general partitions and prove that $F(a_0,\ldots,a_k)$ is of the desired form for
a certain subset $P_{k,e}\subset P_k$. This statement can be extended to all of $P_k$ if the supports of
$F(a_0,\ldots,a_k)$ are sufficiently close to $[a_0,a_k]$, which will be seen in
Section~5.

\section{Main results}

Let $k$ be a non-negative integer and let 
\[
	P_k=\{(a_0,a_1,\ldots,a_k)\in\mathbb
	Q^{k+1} : a_0 <
a_1<\cdots <a_k\}.
\]
Let $F:P_k\to \mathscr E'(\mathbb R)$ be a mapping; for $(a_0,\ldots,a_k)\in P_k$
and $t\in\mathbb R$ we abbreviate $F(a_0,\ldots,a_k;t)= F(a_0,\ldots,a_k)(t)$. 
We assume that the mapping $F$ satisfies the following conditions for all
$(a_0,\ldots,a_k)\in P_k$:

\begin{enumerate}[(i)]
	\item \emph{Support condition:}
		\[
			\supp F(a_0,\ldots,a_k)\subseteq [a_0,a_k],
		\]
		where $\supp T$ denotes the \emph{support} of a distribution $T$ 
		given by the complement of the largest open subset of $\mathbb
		R$ on which $T$ vanishes.
	\item \emph{Span condition: } if $k\geq 1$, for all $a\in
		\cup_{j=0}^{k-1} (a_j,a_{j+1})\cap \mathbb Q$  we have that if 
		$(\alpha_0,\ldots,\alpha_{k+1})$ is the increasing rearrangement
		of the points $\{a_0,\ldots,a_k,a\}$,
		\[
			F(a_0,\ldots,a_k) \in \lin
			\{F(\alpha_0,\ldots,\alpha_k),
				F(\alpha_1,\ldots,\alpha_{k+1}) \}.
		\]

	\item \emph{Translation invariance: } 
		\[
			F(a_0 + \tau, \ldots, a_k+\tau) \in \lin \{F(a_0,\ldots,a_k;
			\cdot-\tau)\},\qquad \tau\in\mathbb Q.
		\]
	\item \emph{Dilation invariance: }
		\[
			F(\delta a_0,\delta a_1,\ldots,\delta a_k) \in \lin 
			\{F(a_0,a_1,\ldots,a_k; \cdot/\delta)\},\qquad \text{for
			positive }\delta\in \mathbb Q.
		\]
\end{enumerate}

In the case $k=0$, condition (ii) is empty and the mapping $F: P_0 \to
\ccE'(\mathbb R)$ is uniquely determined by conditions (i),(iii),(iv) if we know
$F(0)$. Thus, from now on we assume that $k\geq 1$.

Fixing $k_\ell \leq k_r$
with $k_\ell \geq 0$ and $k_r\leq k$ and setting
$F(a_0,\ldots,a_k)$ to be the B-spline corresponding to the points
$(a_{k_\ell},\ldots, a_{k_r})$ for all $(a_0,\ldots,a_k)\in P_k$ satisfies
(i)--(iv).

Moreover, it is easy to see that given a solution $F$ of (i)--(iv), 
derivatives
of $F$ are also solutions of (i)--(iv), i.e. 
we have the following proposition.

\begin{prop}\label{prop:Derivative}
	Suppose that $F:P_k\to \mathscr E'(\mathbb R)$ satisfies (i)-(iv).

	Then, the setting
	\[
		(a_0,\ldots,a_k)\mapsto DF(a_0,\ldots,a_k), \qquad
		(a_0,\ldots,a_k)\in P_k
	\]
	also satisfies (i)--(iv), where $D$ denotes the distributional
	derivative operator.
\end{prop}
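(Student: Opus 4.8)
The plan is to verify conditions (i)--(iv) for the mapping $DF$ one at a time, relying in each case on a compatibility property of the distributional derivative $D$ with the operation appearing in that condition. The entire argument reduces to three standard facts about $\mathscr E'(\mathbb R)$: that $D$ is linear, that it does not enlarge supports, and that it intertwines correctly with translation and dilation.

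For (i), I would use the inclusion $\supp DT\subseteq \supp T$, valid for every $T\in\mathscr E'(\mathbb R)$; combined with the hypothesis $\supp F(a_0,\ldots,a_k)\subseteq[a_0,a_k]$ this immediately yields $\supp DF(a_0,\ldots,a_k)\subseteq[a_0,a_k]$. For (ii), the span condition for $F$ provides scalars $c_0,c_1$ with $F(a_0,\ldots,a_k)=c_0 F(\alpha_0,\ldots,\alpha_k)+c_1F(\alpha_1,\ldots,\alpha_{k+1})$; applying the linear operator $D$ to both sides and reading off the result shows that $DF(a_0,\ldots,a_k)\in\lin\{DF(\alpha_0,\ldots,\alpha_k),DF(\alpha_1,\ldots,\alpha_{k+1})\}$.

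For (iii) and (iv) the only additional ingredient is that $D$ commutes with the two group actions in the appropriate sense. Writing $F(a_0+\tau,\ldots,a_k+\tau)=c\,F(a_0,\ldots,a_k;\cdot-\tau)$ for a suitable scalar $c$ and using the identity $D\bigl(S(\cdot-\tau)\bigr)=(DS)(\cdot-\tau)$ gives condition (iii) for $DF$. Similarly, starting from $F(\delta a_0,\ldots,\delta a_k)=c\,F(a_0,\ldots,a_k;\cdot/\delta)$ and invoking the distributional chain rule $D\bigl(S(\cdot/\delta)\bigr)=\delta^{-1}(DS)(\cdot/\delta)$ one obtains condition (iv), the factor $\delta^{-1}$ being harmless since the conclusion only asserts membership in a one-dimensional span.

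There is no substantial obstacle here: each step is a one-line consequence of a textbook property of distributions. The only points requiring a little care are to track the scalar factor $\delta^{-1}$ produced by differentiating a dilate, and to confirm that what is available for derivatives is the support \emph{inclusion} $\supp DT\subseteq\supp T$ rather than an equality, which is nonetheless exactly what the weakened support condition (i) demands.
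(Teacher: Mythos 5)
Your proof is correct and is exactly the routine verification the paper has in mind (the paper omits the proof, introducing the proposition with ``it is easy to see''). All four steps --- support inclusion under $D$, linearity of $D$ for the span condition, and the commutation identities $D\bigl(S(\cdot-\tau)\bigr)=(DS)(\cdot-\tau)$ and $D\bigl(S(\cdot/\delta)\bigr)=\delta^{-1}(DS)(\cdot/\delta)$ --- are the right ingredients, and your remark that the factor $\delta^{-1}$ is absorbed by the span formulation is the only point of care.
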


We will show that B-Splines and their derivatives are the only solutions to
(i)--(iv) in the sense of the theorems below.

Assume that $F: P_k\to \mathscr E'(\mathbb R)$ is a mapping satisfying (i)--(iv)
as above that is not identically zero. Consider the distribution $F(0,1,\ldots,k-1,k)$ for the equidistant
splitting. We know that $F(0,\ldots,k)$ is not identically zero, since if it
were and $n_0 < \cdots < n_k$ would be an arbitrary increasing sequence of integers, we
infer by the span condition
\[
	F(n_0,\ldots,n_k) = \sum_{j=n_0}^{n_k-k} \lambda_j F(j,\ldots,j+k)
\]
for some coefficients $(\lambda_j)$ and therefore, $F(n_0,\ldots,n_k)$ would be
identically zero for all choices of points in $P_k$ (by translation and dilation invariance of
$F$).

Thus, we define the numbers $k_\ell \leq  k_r$ by setting $k_\ell$ to be the
largest integer $i$ so that $F(0,\ldots,k)$ is identically zero on $(-\infty,i)$
and we set $k_r$ to be the smallest integer so that $F(0,\ldots,k)$ is
identically zero on $(i,\infty)$.
Those numbers $k_\ell$ and $k_r$ satisfy $k_\ell\geq 0$ and $k_r\leq k$ and 
will be crucial in our investigations.

If the support of $F(0,\ldots,k)$ is sufficiently wide, we have the
following theorem.
\begin{thm}[Wide Support]
	\label{thm:main_large}
	Suppose that $F$, not vanishing identically, satisfies conditions (i)-(iv)
	and 	
	we have $\max(k_\ell, k-k_r) \leq 2$.

	Then, for some non-negative integer $n$ and all $(a_0,\ldots,a_k) \in
	P_k$,
	$F(a_0,\ldots,a_k)$ is a constant, non-zero multiple of the $n$th distributional
	derivative of the B-spline function of order $k_r - k_\ell$ with respect
	to the points $a_{k_\ell}, a_{k_\ell+1} ,\ldots ,a_{k_r}$.
\end{thm}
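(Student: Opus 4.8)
The plan is to separate the argument into an \emph{equidistant} stage, which fixes the global shape of $F$, and a \emph{propagation} stage, which transports this shape to arbitrary rational knot tuples. In the equidistant stage I would first show that $\phi:=F(0,1,\dots,k)$ is totally refinable in the sense of Definition~\ref{defn:totally}, using only conditions (ii)--(iv) on integer and scaled-integer grids. Indeed, as observed above, the span condition already gives that for any increasing integer tuple $n_0<\dots<n_k$ one has $F(n_0,\dots,n_k)=\sum_j \lambda_j\,\phi(\cdot-j)$, i.e.\ every integer-grid value of $F$ is a finite linear combination of integer shifts of $\phi$. Applying this to the tuple $(0,M,2M,\dots,kM)$ and combining with dilation invariance (iv), which gives $F(0,M,\dots,kM)=c_M\,\phi(\cdot/M)$, yields $\phi(\cdot/M)\in\lin\{\phi(\cdot-j):j\in\mathbb Z\}$ for every integer $M\ge 1$. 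This is exactly total refinability, so the known classification of such distributions \cite{refinable_approx,refinable_east,refinable_anal} applies and forces $\phi=c\,D^{n}B$ for a nonzero constant $c$, an integer $n\ge 0$, and a cardinal B-spline $B$. Comparing supports with the very definition of $k_\ell$ and $k_r$ identifies $B$ as the cardinal B-spline of order $k_r-k_\ell$ supported on $[k_\ell,k_r]$, fixing the two integers $m:=k_r-k_\ell$ and $n$ once and for all.

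In the propagation stage, given a general $(a_0,\dots,a_k)\in P_k$, I would choose a rational $\varepsilon>0$ so fine that all $a_i$ lie on the grid $\varepsilon\mathbb Z$; then $(a_0,\dots,a_k)$ is a (generally non-consecutive) subsequence of $\varepsilon\mathbb Z$. Inserting the intermediate grid points one at a time and applying the span condition (ii) at each step expresses $F(a_0,\dots,a_k)$ as a finite linear combination of the values of $F$ on \emph{consecutive} $\varepsilon$-blocks, each of which, by the equidistant stage together with (iii),(iv), is a known multiple of a shift of the $n$th derivative of the fine cardinal B-spline of order $m$. Thus $F(a_0,\dots,a_k)$ lies in the span of this fine basis, and it remains to identify the coefficients. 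At each insertion the span relation $F(\text{old})=\lambda F(\text{left})+\mu F(\text{right})$ is, by the support condition (i), a relation between distributions with computable and typically distinct supports, which pins $(\lambda,\mu)$ uniquely; an induction on the number of inserted knots should then show that these coefficients coincide with the classical knot-insertion (Boehm) coefficients for B-splines, so that the accumulated combination is precisely a nonzero multiple of $D^{n}B(a_{k_\ell},\dots,a_{k_r})$. Carrying this out on a suitably spread subset $P_{k,e}\subset P_k$ (Section~4) and then removing the spreading restriction (Section~5) gives the claim for all rational tuples.

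I expect the genuine difficulty to lie entirely in the support bookkeeping at the two truncated ends of $[a_0,a_k]$, where $k_\ell$ knots are cut off on the left and $k-k_r$ on the right. When a knot is inserted near $a_{k_\ell}$ or $a_{k_r}$, the two candidate summands $F(\alpha_0,\dots,\alpha_k)$ and $F(\alpha_1,\dots,\alpha_{k+1})$ can share the same relevant support, so the $2\times2$ system for $(\lambda,\mu)$ degenerates and the coefficients, hence the transported shape, fail to be determined. The hypothesis $\max(k_\ell,k-k_r)\le 2$ is what I would use to guarantee that enough active knots remain next to the boundary of the support for the two summands to separate, and that the coefficients read off from different insertion orders are mutually consistent (a cocycle-type condition), so that the local data glue to a single globally defined B-spline. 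Establishing this consistency is the technical heart of the proof; the complementary narrow-support regime $\max(k_\ell,k-k_r)\ge 3$, where the separation can genuinely fail, is exactly what is deferred to Theorem~\ref{thm:main_small}.
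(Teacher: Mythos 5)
There is a genuine gap at the very first step, and it undermines everything that follows. You claim that once $\phi:=F(0,1,\dots,k)$ is known to be totally refinable, ``the known classification of such distributions applies and forces $\phi=c\,D^{n}B$'' for a single cardinal B-spline $B$. That is not what the classification says: Theorem~\ref{th.rep.1} (Theorem~3 of \cite{refinable_approx}) only gives $\phi=\sum_j\alpha_j D^m N(j,\dots,j+\rho)$, a finite linear combination of integer shifts of $D^mN(0,\dots,\rho)$, with $\rho$ not necessarily equal to $k_r-k_\ell$ and with possibly many nonzero $\alpha_j$. The paper's Section~\ref{ss.extension} exhibits explicit totally refinable distributions that are \emph{not} of the desired form and that nevertheless satisfy (i), (ii.e), (iii), (iv); for instance $\phi=N(0,\nu,\dots,\rho\nu)$ with $\nu\ge 2$ has $k_\ell=0$ and $k_r=k=\rho\nu$, so it even satisfies your hypothesis $\max(k_\ell,k-k_r)\le 2$, yet it is not a derivative of the cardinal B-spline of order $k$. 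Hence the equidistant data alone — which is all your first stage uses — cannot fix the shape of $F$, and your propagation stage then has nothing correct to propagate. Closing this gap is precisely the technical core of the paper: one must exploit the span condition (ii) for genuinely non-equidistant refinements to pass from Proposition~\ref{prop:solequi} to Proposition~\ref{prop:sol}, reduce to the Dirac case by differentiating (Proposition~\ref{prop:Derivative}), control the roots of the symbol polynomial (Proposition~\ref{prop:polyequi}), show all interior coefficients are nonzero (Lemma~\ref{lem:0}), and finally pin the coefficients down to $(-1)^i\binom{k'}{i}$ (Theorem~\ref{thm:endform}). None of this is replaceable by an appeal to total refinability.

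Your second stage is closer in spirit to what the paper actually does, but it is underdeveloped at the point you yourself identify as the heart of the matter. The paper does not verify a cocycle condition for Boehm-type insertion coefficients; instead it shows that under $\max(k_\ell,k-k_r)\le 2$ the support-endpoint functions $\ell$ and $r$ are \emph{constant} on all of $P_k$ (Lemmas~\ref{lem:ellzero}--\ref{lem:elltwo}, each a separate and nontrivial argument), and then a single application of the span condition to a refinement lying in $P_{k,e}$ forces all but one insertion coefficient to vanish (Corollary~\ref{cor:lconstant}), so that $F(a_0,\dots,a_k)=\lambda_0F(b_0,\dots,b_k)$ outright and Theorem~\ref{thm:main_small} finishes the job. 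Your intuition about why $\max(k_\ell,k-k_r)\le 2$ is needed (degeneration of the support bookkeeping at the truncated ends, with the narrow-support regime genuinely failing) is sound, but as written the proposal neither establishes the constancy of $\ell$ and $r$ nor supplies a substitute for it.
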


This result is a consequence of the more precise result of
Theorem~\ref{thm:main_large_refined}.

For narrower supports of $F(a_0,\ldots,a_k)$, we cannot expect such a general
result, as we will see in Example \ref{ex:counter}. Nevertheless, consider the
point class $Z$ consisting of all $(a_0,\ldots,a_k)\in\mathbb Z^{k+1}$ so that
$a_j-a_{j-1} = 1$ for all $j = 1,\ldots, k_\ell$ and $j=k_r
+1,\ldots, k$  and let 
\begin{equation}\label{eq:defPe}
	P_{k,e} := \{ \lambda (a_0,\ldots,a_k) : \lambda\in \mathbb
Q, (a_0,\ldots,a_k)\in Z\}\subset P_k.
\end{equation}

Then we have the following theorem.

\begin{thm}[Narrow Support]
	\label{thm:main_small}
Suppose that $F$, not vanishing identically, satisfies conditions (i)-(iv). 

	Then, for some non-negative integer $n$ and all $(a_0,\ldots,a_k)\in
	P_{k,e}$, $F(a_0,\ldots,a_k)$ is a
	constant, non-zero multiple of the $n$th distributional derivative of the B-spline 
	function of order $k_r - k_\ell$ with
	respect to the points $a_{k_\ell}, a_{k_\ell + 1},\ldots,a_{k_r}$.
\end{thm}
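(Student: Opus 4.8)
The plan is to reduce the statement on $P_{k,e}$ to the analysis already available for equidistant partitions and then propagate it along the flexible "inner" coordinates via the span condition. First I would use the support and span conditions to pin down the structure on the equidistant tuple $(0,1,\ldots,k)$. The definition of $k_\ell$ and $k_r$ already tells us that $F(0,\ldots,k)$ is supported in $[k_\ell,k_r]$ and genuinely reaches both endpoints. I expect that the arguments of Section~3 (the ones invoking known results on \tr{} distributions, cf.\ \cite{refinable_approx,refinable_east,refinable_anal}) identify $F(0,\ldots,k)$ on equidistant data as a nonzero multiple of the $n$th derivative of the cardinal B-spline of order $k_r-k_\ell$ on the points $k_\ell,k_\ell+1,\ldots,k_r$, for some fixed $n\ge 0$. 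The integer $n$ is determined once and for all by the equidistant case, and dilation/translation invariance (iii),(iv) then fix $F$ on every \emph{scaled} equidistant tuple $\lambda(0,1,\ldots,k)$.

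Next I would move from the single equidistant tuple to all of $Z$, i.e.\ to tuples where only the inner gaps $a_{k_\ell+1}-a_{k_\ell},\ldots,a_{k_r}-a_{k_r-1}$ are allowed to vary while the outer gaps stay equal to $1$. The key tool is the span condition (ii): inserting a rational knot $a$ into one of the inner intervals $(a_{j-1},a_j)$ with $k_\ell< j\le k_r$ expresses $F(a_0,\ldots,a_k)$ as a linear combination of two neighbouring tuples with one more inner subdivision. Because the support of $F(a_0,\ldots,a_k)$ lives in $[a_{k_\ell},a_{k_r}]$, inserting knots in the frozen outer intervals changes nothing relevant, so only the $k_r-k_\ell$ inner coordinates are truly active. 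I would therefore run an induction that starts from the equidistant tuple and reaches an arbitrary integer tuple in $Z$ by a finite sequence of such insertions and deletions of inner knots, at each step transporting the "B-spline derivative" form through the span relation. Here the recurrence \eqref{eq:spline_rec} for genuine B-splines is exactly what guarantees that the two-term span relation (ii) is \emph{consistent} with the claimed form: the coefficients forced on $F$ must match those of the B-spline refinement (up to the common derivative order $n$ and an overall nonzero scalar that is propagated coherently).

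Finally, I would combine the integer case with dilation invariance to cover all of $P_{k,e}$. By definition \eqref{eq:defPe} every point of $P_{k,e}$ is of the form $\lambda(a_0,\ldots,a_k)$ with $\lambda\in\mathbb Q$ and $(a_0,\ldots,a_k)\in Z$; condition (iv) transports the established identity on $Z$ to its rational dilate, and since B-splines themselves satisfy (iv) with equality the two sides scale identically, yielding the claim for the whole scaled class. The nonvanishing hypothesis is used to rule out the trivial solution and to ensure the scalar multiple is nonzero at every stage, which is what lets the induction start and never collapse.

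The main obstacle I anticipate is the second step: controlling the scalar multiples and the derivative order $n$ \emph{coherently} as one varies the inner knots. A priori each tuple in $Z$ could carry its own scalar, and the span condition only constrains neighbouring tuples up to linear combination; one must show that the derivative order $n$ is forced to be constant across $Z$ (it cannot jump, since a jump would violate the support/dimension bookkeeping forced by (i) together with (ii)) and that the scalars can be normalized consistently. Ruling out "mixed" solutions, where $F$ would be a B-spline derivative of one order on part of $Z$ and of another order elsewhere, is precisely the delicate point, and I expect it to rely on a rigidity argument comparing the local polynomial degrees and jump behaviour at the knots on both sides of each span relation — essentially matching the breakpoint structure of the two B-splines appearing in (ii) against that of $F(a_0,\ldots,a_k)$.
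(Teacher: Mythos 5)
Your first step contains a decisive gap: the equidistant analysis of Section~3 does \emph{not} identify $F(0,\ldots,k)$ as a single multiple of a derivative of the cardinal B-spline of order $k_r-k_\ell$. What total refinability yields (Corollary~\ref{cor:refin}, Proposition~\ref{prop:solequi}) is only that $F(0,\ldots,k)=\sum_j c_j D^m N(j,\ldots,j+\rho)$ for \emph{some} order $\rho$ and \emph{some} finitely supported coefficient sequence. The paper's own counterexamples in Section~\ref{ss.extension} (e.g.\ $F(0,1,\ldots,\rho\nu)=N(0,\nu,\ldots,\rho\nu)$, or differences and antiderivatives of two such splines) satisfy (i), (ii.e), (iii), (iv), have supports reaching both endpoints $k_\ell$ and $k_r$, and are not of the claimed form. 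So the integer $n$ and the B-spline structure are emphatically not ``determined once and for all by the equidistant case''; excluding these extra totally refinable solutions is the main content of the theorem and requires the genuinely non-equidistant refinements.

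Second, the step you yourself flag as ``the delicate point'' is exactly where the proof lives, and your sketch does not supply it. The paper's route is quite different from knot-by-knot transport of the B-spline form: it first differentiates $F$ enough times (Propositions~\ref{prop:Derivative} and~\ref{prop:sol}) so that the resulting map is a combination of Dirac derivatives $\sum_i f_i D^n\delta_{a_i}$ on all of $P_k$; it then proves the support-localization statements $\ell=k_\ell$, $r=k_r$ and $f_i\neq 0$ for $k_\ell\le i\le k_r$ on $P_{k,e}$ (Lemmas~\ref{lem:contraction}--\ref{lem:0}); and finally, in Theorem~\ref{thm:endform}, it combines the constraint on the zeros of the symbol polynomial $p(w)=\sum_i h_iw^i$ (Proposition~\ref{prop:polyequi}) with an invertibility argument for the matrices $(h_{s_i-j})$ coming from specially chosen non-equidistant integer partitions, forcing $p(w)=(1-w)^{k'}$. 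A two-term span relation between three a priori unknown distributions does not by itself propagate the B-spline form from a coarse tuple to its refinements, so the induction you describe cannot get off the ground without an independent rigidity input of this kind. Your final dilation step and the recovery of $F$ from its derivative (with the polynomial ambiguity killed by the support condition) are fine, but they rest entirely on the part that is missing.
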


\begin{rem}
	By subdividing for instance equidistantly and using the assertion of
Theorem \ref{thm:main_small} and the span condition we obtain for all 
	$(a_0,\ldots,a_k)\in P_k$ that $F(a_0,\ldots,a_k)$ is the $n$th derivative of
	some spline (but not necessarily a B-spline) of order $k_r-k_\ell$ with the same parameter $n$ as in
	Theorem~\ref{thm:main_small}.
\end{rem}

\begin{example}\label{ex:counter}
	We now give an explicit example for $F$ satisfying (i)--(iv) with $k=4$, $k_\ell
	= 3$, $k_r = 4$ that does not allow for the stronger conclusion in
	Theorem \ref{thm:main_large}. 

	Set 
	\begin{align*}
		A &:= \{ (a_0,\ldots,a_4) \in P_4 : (a_2 - a_1)/(a_1 - a_0) =2,
			(a_3 - a_2)/(a_1 - a_0)\leq
		1\}
	\end{align*}
	and define
	\begin{align*}
		F(a_0,\ldots,a_4) &= \delta_{a_2} - \delta_{a_4},\qquad
		(a_0,\ldots,a_4) \in A,	\\
		F(a_0,\ldots,a_4) &= \delta_{a_3} - \delta_{a_4},\qquad
		(a_0,\ldots,a_4) \in P_4\setminus A.
	\end{align*}
	It is apparent that this setting of $F$ satisfies conditions (i), (iii)
	and (iv) and an easy calculation shows that this $F$ also satisfies
	the span condition (ii). For this calculation and a generalization of
	this counterexample, we refer to Appendix \ref{ss.ext.counter}.
	For $(a_0,\ldots,a_4)\in A$, this mapping $F$ does not satisfy the
	conclusion of Theorem \ref{thm:main_large}.
	This choice of $F$ is not continuous on $P_4$ but note that continuity of
	$F$ is not part of the assumptions of both Theorem \ref{thm:main_large}
	and Theorem \ref{thm:main_small}.
\end{example}

\section{Equidistant partitions}\label{s.equi.new}

While working in the equidistant case, we conveniently define $E_k\subset P_k$ by 
\[
	E_k=\{(a_0,a_1,\ldots,a_k)\in P_k : a_{j+1}-a_j = a_{j}-a_{j-1}\text{
	for all $j=1,\ldots,k-1$}\}.
\]
Our aim in this section is to find out the form of $F: E_k \to \ccE'(\bR)$.

\subsection{Some facts about compactly supported distributions.}\label{equiv.s1}
For non-negative integers $j$, we define the function $u_j(x) := x^j$ for
$x\in\mathbb R$.
\begin{defin}\label{def:moment}
Let $m \geq 1$ be an integer. We say that $\phi \in \ccE'(\bR)$ satisfies moment conditions of
order $m$ if $\phi(u_j) = 0$ for all $j=0, \ldots, m-1$.
\end{defin}

Let us note that  if $\phi \not\equiv 0$, then there is $j \geq 0$ such that
$\phi (u_j) \neq 0$. Indeed, the Paley-Wiener theorem states that if $\phi \in
\ccE'(\bR)$, then $\hat{\phi}$ is an entire function, given by the formula
$\hat{\phi} (z) =  \phi_x ( e^{-i x z}).$
Expanding $\hat{\phi}$ into power series we find
$\hat{\phi}(z) = \sum_{j=0}^\infty a_j z^j,$
where
$a_j =  \frac{1}{j!} D^j \hat{\phi}(0) = \frac{ 1}{j!}   \phi ( (- i)^j u_j).$
As $\hat{\phi} \not \equiv 0$, it follows that there is $j \geq 0$ such that
$\phi (u_j) \neq 0$.

\begin{fact}\label{fact.2}
Let $m \geq 1$ be an integer, and let $\phi \in \ccE'(\bR)$ satisfy moment conditions of order $m$. Then there is a unique $\psi \in \ccE'(\bR)$ such that 
$\phi = D^{m} \psi$. In addition, if $\phi(u_m) \neq 0$, then $\hat{\psi}(0) \neq 0$.
\end{fact}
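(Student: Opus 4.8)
The plan is to transfer everything to the Fourier side, where the statement becomes a division problem, and then to return via the Paley--Wiener--Schwartz theorem. Recall from the discussion above that the moment conditions $\phi(u_j)=0$ for $j=0,\dots,m-1$ are equivalent to the vanishing of the first $m$ Taylor coefficients $a_0,\dots,a_{m-1}$ of $\hat\phi$ at the origin; hence $\hat\phi$ has a zero of order at least $m$ at $z=0$, and the quotient $g(z):=\hat\phi(z)/(iz)^m$ extends across $z=0$ to an entire function. Since $\widehat{D^m\psi}(z)=(iz)^m\hat\psi(z)$, finding $\psi$ with $D^m\psi=\phi$ is the same as realizing $g$ as the Fourier transform $\hat\psi$ of some $\psi\in\ccE'(\bR)$.

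First I would verify that $g$ satisfies the growth estimate required by the Paley--Wiener--Schwartz theorem, namely that there are constants $C,N$ and an $R>0$ with $|g(z)|\le C(1+|z|)^N e^{R|\operatorname{Im}z|}$ for all $z\in\bC$. This follows directly from the corresponding estimate for $\hat\phi$, which holds because $\phi$ is compactly supported (say in $[-R,R]$): for $|z|\ge 1$ dividing by $|z|^m$ only decreases the bound, while on the compact set $|z|\le 1$ the entire function $g$ is bounded and can be absorbed into $C$. The converse direction of Paley--Wiener--Schwartz then produces $\psi\in\ccE'(\bR)$ with $\hat\psi=g$, and computing $\widehat{D^m\psi}=(iz)^m g=\hat\phi$ together with the injectivity of the Fourier transform on $\ccE'(\bR)$ gives $D^m\psi=\phi$.

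For uniqueness, if $D^m\psi_1=D^m\psi_2=\phi$ then $\eta:=\psi_1-\psi_2\in\ccE'(\bR)$ satisfies $(iz)^m\hat\eta(z)\equiv 0$, so $\hat\eta$ vanishes off the origin and hence identically by continuity; injectivity of the Fourier transform forces $\eta=0$. For the additional claim I would evaluate $\hat\psi(0)=g(0)$ as the limit of $\hat\phi(z)/(iz)^m$ at $z=0$, which equals $a_m/i^m$; using $a_m=\frac{(-i)^m}{m!}\phi(u_m)$ from the power series expansion above, this simplifies to $\hat\psi(0)=\frac{(-1)^m}{m!}\phi(u_m)$, which is nonzero precisely when $\phi(u_m)\neq 0$.

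The main obstacle is the passage back from the Fourier side, i.e.\ the verification that the quotient $g$ lies in the image of the Fourier transform on $\ccE'(\bR)$; this is exactly where the converse direction of Paley--Wiener--Schwartz (the growth bound, not merely entireness) is needed, and where the moment conditions are indispensable, since without them $g$ would fail to be entire at the origin and no compactly supported primitive could exist.
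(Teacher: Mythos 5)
Your proof is correct, but it takes a genuinely different route from the paper's. You work entirely on the Fourier side: the moment conditions make $\hat\phi$ vanish to order at least $m$ at the origin, you divide by $(iz)^m$, check that the quotient still satisfies the Paley--Wiener--Schwartz growth bound, and invoke the \emph{converse} direction of that theorem to produce $\psi\in\ccE'(\bR)$; uniqueness and the value $\hat\psi(0)=(-1)^m\phi(u_m)/m!$ then drop out of the injectivity of the Fourier transform and the Taylor expansion of $\hat\phi$. The paper avoids the converse Paley--Wiener--Schwartz theorem altogether (only the elementary forward direction appears, and only in the preceding discussion): it defines $\psi$ directly by duality, setting $\psi(f):=(-1)^m\phi(g)$ for any $g$ with $D^m g=f$, observes that the moment conditions make this independent of the choice of antiderivative (two antiderivatives differ by a polynomial of degree $<m$, which $\phi$ annihilates), and obtains continuity --- hence membership in $\ccE'(\bR)$ --- by choosing the particular antiderivative $g_{x_0}$ normalized to vanish to order $m$ at a fixed point $x_0$, so that $f\mapsto g_{x_0}$ is linear and continuous on $\ccE(\bR)$. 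The paper's construction is more elementary and self-contained, and makes the identity $\hat\psi(0)=\psi(u_0)=(-1)^m\phi(u_m/m!)$ immediate; yours outsources the analytic work to the standard but nontrivial converse of Paley--Wiener--Schwartz and, as a small bonus, yields explicit control of $\supp\psi$ inside the convex hull of $\supp\phi$, which the paper does not record (and does not need). Both arguments establish existence, uniqueness, and the nonvanishing of $\hat\psi(0)$ correctly, and your sign bookkeeping $(-i)^m/i^m=(-1)^m$ agrees with the paper's formula.
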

\begin{proof}
Fix $f \in \ccE(\bR)$. Let $g \in \ccE(\bR)$ be such that $D^{m} g= f$.
The defining condition for $\psi$ states that we must have
$$
\phi(g) = D^m \psi (g) = (-1)^m \psi (D^m g) = (-1)^m \psi(f).
$$
Therefore, $\psi$ - if it exists - must be defined by the formula
$$
\psi(f) = (-1)^{m} \phi(g).
$$
Note that $\phi$ satisfies moment conditions of order $m$, and if $D^m g_1 = D^m g_2$, then $g_1 - g_2$ is a polynomial of order $m$.
It follows that $\psi$ is well-defined.
To see continuity of $\psi$ on $\ccE(\bR)$, fix $x_0 \in \bR$, and consider $g_{x_0}$ such that $D^m g_{x_0} = f$ on $\bR$ and $D^j g_{x_0} (x_0) = 0$
for $j=0, \ldots, m-1$. Note that the mapping $f \to g_{x_0}$ is linear and continuous on $\ccE(\bR)$, and we can take $\psi(f) = (-1)^m \phi(g_{x_0})$.

In addition, if $\phi(u_m) \neq 0$, then $\hat{\psi}(0) = \psi( u_0) = (-1)^m
\phi(u_m/m!) \neq 0$.
\end{proof}

For later convenience, if $\phi$ satisfies moment conditions of order $m$, let
$D^{-m} \phi$ be the distribution $\psi$ given by Fact \ref{fact.2}.

\subsection{Totally refinable  distributions.}\label{equiv.s2}

In the sequel, we refer to results from \cite{refinable_approx} on the 
characterization of totally refinable distributions.

Let us recall the definition of a totally refinable distribution, see e.g.
\cite{refinable_approx}. Please note some difference in the comparison with the
definition of $p$-refinable and totally refinable distribution in
\cite{refinable_approx} -- Definition \ref{defn:totally} does not require $\hat{\phi}(0) \neq 0$.
\begin{defin}\label{defn:totally} Let $p \geq 2$, $p \in \bZ$. A compactly
	supported distribution $\phi$, $\phi \not \equiv 0$, is called {\em
	$p$-refinable} if there exists a sequence of coefficients
$\{ c_j, j \in \bZ\}$, with only finitely many non-zero terms such that
\begin{equation}\label{eq:refin}
\phi  = \sum_{j \in \bZ} c_j \phi(p \cdot - j).
\end{equation}
A compactly supported distribution $\phi$ is called \emph{totally refinable} if it is $p$-refinable for all $p \in \bZ$, $p \geq 2$.
\end{defin}

Theorem 3 of \cite{refinable_approx} implies the following representation of  totally
refinable distributions:
\begin{thm}\label{th.rep.1}
Let $\phi$ be a totally refinable distribution with $\hat{\phi}(0)=1$. Then,
there is a non-negative integer $\rho$ and a sequence of coefficients
$\{\alpha_j , j \in \bZ\}$ with only finitely many non-zero terms, such that
$$\phi = \sum_{j \in \bZ} \alpha_j N(j, \ldots, j+\rho),$$
where for $\rho=0$  we denote $N(j) = \delta_j$.
\end{thm}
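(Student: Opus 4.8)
The plan is to translate the "totally refinable" hypothesis into a concrete structural statement about the Fourier transform $\hat\phi$ and then read off the B-spline representation by factoring $\hat\phi$. The cited Theorem 3 of \cite{refinable_approx} is presumably stated for refinable distributions with $\hat\phi(0)\neq 0$; since here we normalize $\hat\phi(0)=1$, the task is to show that this normalized totally refinable $\phi$ must be a finite integer combination of B-splines $N(j,\ldots,j+\rho)$ of a \emph{single} fixed order $\rho+1$. The main conceptual ingredient is the classical fact that, on the Fourier side, the $p$-scaling relation \eqref{eq:refin} forces a fixed factorization: writing the relation as $\hat\phi(\xi)=m_p(\xi/p)\hat\phi(\xi/p)$ with a trigonometric polynomial symbol $m_p$, and doing this simultaneously for every integer $p\geq 2$, the only entire functions of exponential type (which $\hat\phi$ must be, by Paley--Wiener) that are compatible with \emph{all} these scalings are products of the B-spline symbol factors. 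The B-spline $N(0,\ldots,\rho)$ has Fourier transform $\big((1-e^{-i\xi})/(i\xi)\big)^{\rho}$ up to normalization, so its $p$-scaling symbol is $\big((1-e^{-i\xi})/(1-e^{-i\xi/p})\big)^{\rho}\cdot p^{-\rho}$ or the analogous expression, and these are exactly the factors that survive simultaneous refinability.

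First I would invoke Theorem 3 of \cite{refinable_approx} in its original form to obtain the structural output for $\phi$ under the extra assumption $\hat\phi(0)\neq 0$, which is guaranteed here since $\hat\phi(0)=1$. Concretely, that theorem should give that $\hat\phi(\xi)$ equals a trigonometric polynomial times a fixed power $\big((1-e^{-i\xi})/(i\xi)\big)^{\rho}$, or equivalently that $\phi$ is a finite combination $\sum_j \alpha_j N(j,\ldots,j+\rho)$ of integer-shifted B-splines of one order. The second step is to verify the normalization and the degenerate case: for $\rho=0$ one has $N(j)=\delta_j$, and the assertion reduces to $\phi=\sum_j\alpha_j\delta_j$, which is exactly the class of totally refinable distributions supported on integers with $\hat\phi(0)=1$; this boundary case should be handled separately but is elementary since a combination of Dirac masses at integers is $p$-refinable for all $p$ precisely when it is a (finite) sum over an integer lattice that is invariant under the dilations, and the normalization $\sum_j\alpha_j=\hat\phi(0)=1$ fixes the total mass.

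The heart of the argument, which I expect to be where the work from \cite{refinable_approx} is genuinely used rather than merely cited, is the simultaneous-$p$ compatibility: a distribution refinable for one particular $p$ need not have such a clean form, so one must exploit refinability for \emph{all} $p\geq 2$ to force the symbol $m_p$ to be the B-spline symbol for every $p$ with a common exponent $\rho$. The standard mechanism is that the zero set of $\hat\phi$, together with the functional equations $\hat\phi(p\xi)=m_p(\xi)\hat\phi(\xi)$, constrains the location and multiplicities of zeros so rigidly that only the uniform B-spline factor is admissible; pinning down that the exponent $\rho$ is the same for all $p$ and that no extra trigonometric-polynomial factor with a nontrivial zero can persist under all dilations is the delicate point. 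Since the statement as phrased is essentially a direct reformulation of the cited result under the normalization $\hat\phi(0)=1$, I would present the proof as a reduction: cite \cite[Theorem~3]{refinable_approx} to produce the representation, then translate its conclusion into the B-spline language of the present paper and check the $\rho=0$ convention, flagging the simultaneous-$p$ rigidity as the content imported from the reference rather than reproved here.
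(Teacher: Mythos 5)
Your proposal takes essentially the same route as the paper: the paper offers no independent proof of this statement, but simply records it as a consequence of Theorem~3 of \cite{refinable_approx} (whose hypothesis $\hat{\phi}(0)\neq 0$ is supplied here by the normalization $\hat{\phi}(0)=1$), which is exactly the reduction you describe. Your additional Fourier-analytic discussion of the symbols $m_p$ and the simultaneous-$p$ rigidity is reasonable background on how the cited result is proved, but it plays no role in the paper's argument, which imports that content wholesale from the reference.
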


However, there is the following:
\begin{prop}\label{prop:deriv}
Let $\phi \in \ccE'(\bR)$ be totally refinable. Then there exist an integer $m \geq
0$ and 
$\psi \in  \ccE'(\bR)$, which is totally refinable and  $\hat{\psi}(0) \neq 0$,
such that $\phi = D^m \psi$.
\end{prop}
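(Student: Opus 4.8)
The plan is to take $m$ to be the smallest non-negative integer for which $\phi(u_m)\neq 0$, to set $\psi:=D^{-m}\phi$ by means of Fact~\ref{fact.2}, and then to verify that this $\psi$ is again totally refinable. The equality $\phi=D^m\psi$ and the non-vanishing $\hat\psi(0)\neq 0$ will be immediate from Fact~\ref{fact.2}, so the whole weight of the argument rests on establishing the refinability of $\psi$.

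First I would justify the construction. Since a totally refinable distribution is by definition not identically zero, the Paley--Wiener argument recalled after Definition~\ref{def:moment} shows that the set of $j\geq 0$ with $\phi(u_j)\neq 0$ is non-empty; let $m$ be its minimum. If $m=0$ then $\hat\phi(0)=\phi(u_0)\neq 0$ and we may take $\psi=\phi$, so assume $m\geq 1$. By minimality $\phi$ satisfies moment conditions of order $m$, and $\phi(u_m)\neq 0$, so Fact~\ref{fact.2} produces a unique $\psi=D^{-m}\phi\in\ccE'(\bR)$ with $\phi=D^m\psi$ and $\hat\psi(0)\neq 0$; in particular $\psi\not\equiv 0$.

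Next I would transfer refinability from $\phi$ to $\psi$ through the Fourier transform. Fix an integer $p\geq 2$ and write the refinement relation $\phi=\sum_{j}c_j\,\phi(p\cdot-j)$ with only finitely many non-zero $c_j$. Taking Fourier transforms and using $\widehat{\phi(p\cdot-j)}(z)=p^{-1}e^{-ijz/p}\hat\phi(z/p)$ gives the mask identity
\[
	\hat\phi(z)=\frac1p\Big(\sum_j c_j e^{-ijz/p}\Big)\hat\phi(z/p).
\]
Since $\phi=D^m\psi$ forces $\hat\phi(z)=(iz)^m\hat\psi(z)$, I would substitute this relation on both sides; the common factor $(iz)^m$ cancels and, after accounting for the dilation $z\mapsto z/p$, one is left with
\[
	\hat\psi(z)=\frac{1}{p^{m+1}}\Big(\sum_j c_j e^{-ijz/p}\Big)\hat\psi(z/p).
\]
By injectivity of the Fourier transform on $\ccE'(\bR)$ this is exactly the Fourier form of $\psi=\sum_j p^{-m}c_j\,\psi(p\cdot-j)$, again a finite sum, so $\psi$ is $p$-refinable. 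As $p\geq 2$ was arbitrary, $\psi$ is totally refinable, which completes the proof.

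The argument is short once Fact~\ref{fact.2} is in hand, and the only point requiring care is the passage between the distributional refinement equation and its Fourier mask form. Because $\phi=D^m\psi$ already supplies $\hat\phi(z)=(iz)^m\hat\psi(z)$ with $\hat\psi(0)\neq 0$, the factor $(iz)^m$ is genuinely common to both sides and cancels without any division of entire functions; what then remains is merely to observe that the transformed mask $p^{-m-1}\sum_j c_j e^{-ijz/p}$ still has only finitely many coefficients, so that the recovered relation is a legitimate refinement equation in the sense of Definition~\ref{defn:totally}. I would expect the bookkeeping of these finiteness and normalization constants to be the most error-prone part of writing up the details, rather than a conceptual obstacle.
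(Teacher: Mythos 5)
Your proposal is correct and follows essentially the same route as the paper: take $m$ to be the order of the first non-vanishing moment, set $\psi = D^{-m}\phi$ via Fact~\ref{fact.2}, and transfer the refinement equation to $\psi$ with the rescaled coefficients $p^{-m}c_j$. The only cosmetic difference is that you verify this transfer on the Fourier side, cancelling the common factor $(iz)^m$ between entire functions, whereas the paper works directly on the distributional level via the identity $D^{-m}\bigl(\phi(\lambda\cdot-\eta)\bigr)=\lambda^{-m}(D^{-m}\phi)(\lambda\cdot-\eta)$; both are valid.
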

\begin{proof} If $\hat{\phi}(0) \neq 0$, then it is enough to take $m=0$ and $\psi = \phi$.

Assume that $\phi$ satisfies moment conditions of order $m \geq 1$. Then $D^{-m} \phi \in \ccE'(\bR)$ is well defined (cf. Fact \ref{fact.2}).
To see that $D^{-m}\phi$ is totally refinable, observe the following:
\begin{itemize}
\item Let $\lambda, \eta \in \bR$, $\lambda \neq 0$. Then $\phi( \lambda \cdot - \eta)$ satisfies moment condition of order $m$, and 
$ D^{-m} \big( \phi( \lambda \cdot - \eta) \big) = \lambda^{-m} ( D^{-m} \phi ) (\lambda \cdot - \eta)$.
\end{itemize}
Fix $p \geq 2$, and let $\phi$ satisfy \eqref{eq:refin}. Then we have
\begin{eqnarray*}
 D^{-m} \phi & = & D^{-m} \Big( \sum_{j \in \bZ} c_j  \phi(p \cdot - j) \Big)
\\ & = & \sum_{j \in \bZ} c_j D^{-m}\big( \phi(p \cdot - j) \big) 
=  \sum_{j \in \bZ} c_j p^{-m} ( D^{-m} \phi )(p \cdot - j).
\end{eqnarray*}
It follows that $D^{-m} \phi$ is totally refinable. 

Finally, if we choose $m \geq 1$ so that  $\phi$ satisfies moment condition of order $m$, but not of order $m+1$, then
we have $\widehat{D^{-m} \phi}(0) \neq 0$, see Fact \ref{fact.2}.
\end{proof}

Combining Theorem \ref{th.rep.1} and Proposition \ref{prop:deriv}, we get the following:

\begin{cor}\label{cor:refin}
Let $\phi \in \ccE'(\bR)$ be totally refinable. 
Then there are two integers  $m,\rho\geq 0$ and a sequence of coefficients
$\{\alpha_j , j \in \bZ\}$ with only finitely many non-zero terms, such that
$$\phi = \sum_{j \in \bZ} \alpha_j D^mN(j, \ldots, j+\rho).$$
\end{cor}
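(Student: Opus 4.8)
The plan is to combine the two preceding results in the obvious way, treating the derivative operator $D^{-m}$ from Fact~\ref{fact.2} as the bookkeeping device that lets us pass between Proposition~\ref{prop:deriv} and Theorem~\ref{th.rep.1}. Concretely, given a totally refinable $\phi\in\ccE'(\bR)$, I would first invoke Proposition~\ref{prop:deriv} to produce an integer $m\geq 0$ and a totally refinable $\psi\in\ccE'(\bR)$ with $\hat\psi(0)\neq 0$ such that $\phi = D^m\psi$. Having secured $\hat\psi(0)\neq 0$ is the crucial gain, because it is exactly the hypothesis needed to apply the representation theorem.

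Next I would normalize $\psi$ and apply Theorem~\ref{th.rep.1}. Since $\hat\psi(0)\neq 0$, set $c := \hat\psi(0)$ and consider $\psi_0 := \psi/c$, which is again totally refinable (refinability is clearly invariant under scaling by a nonzero constant, as \eqref{eq:refin} is linear in $\phi$) and satisfies $\hat{\psi_0}(0)=1$. Theorem~\ref{th.rep.1} then furnishes a non-negative integer $\rho$ and finitely-supported coefficients $\{\beta_j : j\in\bZ\}$ with
\[
	\psi_0 = \sum_{j\in\bZ} \beta_j N(j,\ldots,j+\rho).
\]
Multiplying back by $c$ gives $\psi = \sum_{j} (c\beta_j) N(j,\ldots,j+\rho)$.

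Finally I would apply the distributional derivative $D^m$ to this identity. Since $D^m$ is linear and continuous on $\ccE'(\bR)$, it commutes with the finite sum, yielding
\[
	\phi = D^m\psi = \sum_{j\in\bZ} (c\beta_j)\, D^m N(j,\ldots,j+\rho).
\]
Setting $\alpha_j := c\beta_j$, which again has only finitely many nonzero terms, produces exactly the claimed representation with the two non-negative integers $m$ and $\rho$.

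I do not expect any genuine obstacle here, since the corollary is a formal consequence of the two results it cites; the only point requiring a moment's care is confirming that scaling by the nonzero constant $c$ preserves total refinability so that Theorem~\ref{th.rep.1} applies to the normalized distribution $\psi_0$, and that $D^m$ may be moved inside the finite sum. Both are immediate from linearity. One could equally absorb the normalization into the coefficients from the start, so the argument is essentially just concatenating Proposition~\ref{prop:deriv} and Theorem~\ref{th.rep.1}.
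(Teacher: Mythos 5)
Your argument is correct and is exactly the paper's intended proof: the paper simply states that the corollary follows by ``combining'' Theorem~\ref{th.rep.1} and Proposition~\ref{prop:deriv}, which is precisely your concatenation of the two results. Your added care about normalizing $\psi$ so that $\hat{\psi}(0)=1$ before invoking Theorem~\ref{th.rep.1}, and about moving $D^m$ through the finite sum, fills in the only details the paper leaves implicit.
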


For further reference, we make the following observation:
\begin{fact}\label{fact.support}
Fix  integers $m,\rho \geq 0$,  and a sequence of coefficients $\{c_j , j_{\min}
\leq j \leq j_{\max}\}$, with $c_{j_{\min}} \neq 0$, $c_{j_{\max}} \neq 0$.
For an increasing sequence $(a_j)$ of rational numbers,
consider $\psi = \sum_{j = j_{\min}}^{j_{\max}} c_j D^m N(a_j, \ldots,
a_{j+\rho})$.
Then
$ \min \supp \psi = a_{j_{\min}}$ and $ \max \supp \psi = a_{j_{\max} + \rho}$.
\end{fact}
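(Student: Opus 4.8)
The plan is to bound $\supp\psi$ from the outside first and then pin down its two endpoints exactly by a localization argument. Since the B-spline $N(a_i,\ldots,a_{i+\rho})$ has support $[a_i,a_{i+\rho}]$ (with the convention $\supp N(a_i)=\{a_i\}$ when $\rho=0$) and distributional differentiation does not enlarge supports, each summand $c_i D^m N(a_i,\ldots,a_{i+\rho})$ is supported in $[a_i,a_{i+\rho}]$. Hence $\supp\psi\subseteq[a_{j_{\min}},a_{j_{\max}+\rho}]$, which already gives $\min\supp\psi\geq a_{j_{\min}}$ and $\max\supp\psi\leq a_{j_{\max}+\rho}$. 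It then remains to show that both endpoints actually lie in $\supp\psi$.

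The heart of the matter is the following claim about a single term: for every $i$ and every $m\geq0$, the left endpoint $a_i$ belongs to $\supp D^m N(a_i,\ldots,a_{i+\rho})$, and symmetrically so does $a_{i+\rho}$. For $\rho=0$ this is immediate, since $N(a_i)=\delta_{a_i}$ and $D^m\delta_{a_i}=\delta_{a_i}^{(m)}$ has support $\{a_i\}$. For $\rho\geq1$ I would use that on the open interval $(-\infty,a_{i+1})$ the B-spline coincides with $c\,(\cdot-a_i)_+^{\rho-1}$ for a constant $c\neq0$ (with $(\cdot-a_i)_+^0=\charfun_{(a_i,\infty)}$); this follows by induction on $\rho$ from the recursion \eqref{eq:spline_rec}, since on $(a_i,a_{i+1})$ the second summand vanishes and the first is, inductively, a nonzero multiple of $(\cdot-a_i)^{\rho-2}$, the base case $\rho=1$ being $N(a_i,a_{i+1})=\charfun_{(a_i,a_{i+1}]}$. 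A direct distributional computation of the derivative of the truncated power then yields, on $(-\infty,a_{i+1})$,
\[
	D^m N(a_i,\ldots,a_{i+\rho})=
	\begin{cases}
		c\,\dfrac{(\rho-1)!}{(\rho-1-m)!}\,(\cdot-a_i)_+^{\rho-1-m}, & 0\leq m\leq\rho-1,\\[2ex]
		c\,(\rho-1)!\,\delta_{a_i}^{(m-\rho)}, & m\geq\rho,
	\end{cases}
\]
with $\delta_{a_i}^{(0)}=\delta_{a_i}$. In every case this distribution is nonzero on each neighborhood of $a_i$: as a nonzero truncated power (or Heaviside function) when $m<\rho$, and as a derivative of a Dirac mass concentrated at $a_i$ when $m\geq\rho$. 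Thus $a_i\in\supp D^m N(a_i,\ldots,a_{i+\rho})$, and the assertion for $a_{i+\rho}$ follows by the mirror-image argument on $(a_{i+\rho-1},\infty)$.

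With the claim in hand, the endpoints of $\supp\psi$ are identified by localizing. On the open set $(-\infty,a_{j_{\min}+1})$ every summand with $i\geq j_{\min}+1$ vanishes, because its support lies in $[a_i,\infty)\subseteq[a_{j_{\min}+1},\infty)$; hence there $\psi$ coincides with $c_{j_{\min}}D^m N(a_{j_{\min}},\ldots,a_{j_{\min}+\rho})$. Since $c_{j_{\min}}\neq0$ and the proof of the claim in fact exhibits $a_{j_{\min}}$ in the support of the restriction of $D^m N(a_{j_{\min}},\ldots,a_{j_{\min}+\rho})$ to $(-\infty,a_{j_{\min}+1})$, the identity $\supp(T|_U)=U\cap\supp T$ for open $U$ gives $a_{j_{\min}}\in\supp\psi$, so $\min\supp\psi=a_{j_{\min}}$. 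Symmetrically, on $(a_{j_{\max}+\rho-1},\infty)$ every summand with $i\leq j_{\max}-1$ vanishes, $\psi$ equals $c_{j_{\max}}D^m N(a_{j_{\max}},\ldots,a_{j_{\max}+\rho})$ there, and $c_{j_{\max}}\neq0$ together with the claim yields $a_{j_{\max}+\rho}\in\supp\psi$, whence $\max\supp\psi=a_{j_{\max}+\rho}$.

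The routine bookkeeping (supports of B-splines, restriction of supports to open sets, the elementary derivative formula for the truncated power) is standard. The one genuinely delicate point is the claim when $m\geq\rho$: there the function part of $D^m N$ vanishes on the open interval $(a_i,a_{i+1})$, so the endpoint survives in the support \emph{purely} through the singular contribution, namely the Dirac mass created at $a_i$ by differentiating across the corner of the truncated power. This is precisely why one must work with the distributional rather than the pointwise derivative and carefully track the jump at $a_i$, and it is the step I would write out in most detail.
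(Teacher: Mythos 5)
Your proof is correct. The paper states this Fact without proof (it is introduced merely as an observation), so there is no argument to compare against; your write-up supplies exactly the standard justification one would expect: the outer inclusion $\supp\psi\subseteq[a_{j_{\min}},a_{j_{\max}+\rho}]$ from the supports of the individual summands, the identification of the exact leading behaviour $c\,(\cdot-a_i)_+^{\rho-1}$ of $N(a_i,\ldots,a_{i+\rho})$ on $(-\infty,a_{i+1})$ via the recursion \eqref{eq:spline_rec}, and the localization to $(-\infty,a_{j_{\min}+1})$ (respectively $(a_{j_{\max}+\rho-1},\infty)$) where only the extreme summand survives. You are also right to flag the case $m\geq\rho$ as the one point needing care: there the endpoint remains in the support only through the Dirac contributions $\delta_{a_i}^{(m-\rho)}$ produced by differentiating the truncated power across the knot, and your computation handles this correctly, including the degenerate case $\rho=0$.
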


\subsubsection{Correspondence between  totally refinable distributions $\phi$ and mappings $K :
E_k \to \ccE'(\bR)$.}\label{sss.v2}
Let us consider $K : E_k \to \ccE'(\bR)$. Conditions (i), (iii) and (iv) make
sense in this setting, but we need to replace the span condition (ii) by its equidistant version: 
\begin{itemize}
\item[(ii.e)] For $k\geq 1$,  fix $(a_0, \ldots, a_{{k}}) \in  E_{{k}}$ and $p \geq 2$, $p \in \bZ$. 
Let $b_0, \ldots, b_{p {k}}$ be a $p$-refinement of $(a_0, \ldots, a_{{k}})$, i.e. a   sequence of points such that $b_{pj} = a_j$ for
all $j=0, \ldots, {k}$, and $(b_s, \ldots, b_{s+ {k}} ) \in E_{{k}}$ for all $s = 0, \ldots , (p-1){k}$.
Then
$$
K(a_0, \ldots, a_{{k}}) \in \lin\{ K(b_s, \ldots, b_{s+ {k}} ) : s = 0, \ldots , (p-1){k} \}.
$$ 
\end{itemize}
Note that for the choice $k=0$, condition (ii.e) is empty.

There is the following  correspondence between $K$ satisfying (i), (ii.e),
(iii), (iv) and $\phi$ which is totally refinable.
Clearly, if $K: E_k \to \ccE'(\bR)$ satisfies (i), (ii.e), (iii), (iv), then
$K(0,\ldots, k)$ is totally refinable.

On the other hand, let $\phi$ be totally refinable. 
Let
$$
k_{\min} = \min \supp \phi, \quad k_{\max} = \max \supp \phi.
$$
Corollary \ref{cor:refin} implies -- via Fact \ref{fact.support} --  that
$k_{\min}, k_{\max } \in \bZ$.
Let $\kappa = k_{\max} - k_{\min}$. 
 Consider $K_\phi: E_\kappa \to \ccE'(\bR)$ given by
$$
K_\phi (k_{\min}, \ldots, k_{\max}) = \phi,
$$
and extend it to $E_\kappa$ by translations and dilations. That is, 
 if $(a_0, \ldots, a_\kappa) \in E_\kappa$, then there are $\alpha, \beta \in
 \mathbb Q$, $\alpha > 0$
such that
$(a_0, \ldots, a_\kappa) = \alpha \cdot( k_{\min}, \ldots, k_{\max} )  + \beta$, and $K_\phi$ is given by
 the formula 
\begin{equation}\label{eq:307}
K_\phi( \alpha k_{\min} + \beta, \ldots, \alpha k_{\max} + \beta )(\cdot) = \phi \Big( \frac{\cdot - \beta} {\alpha} \Big).
\end{equation}
Then $K_\phi$ satisfies (i), (ii.e), (iii), (iv).

\subsection{On the extension of $F:E_k \to \ccE'(\bR)$ to $F:P_k \to \ccE'(\bR)$.}\label{ss.extension}
Let $\phi$ be totally refinable, with $k_{\min} = \min \supp \phi$ and $k_{\max}
= \max \supp \phi$. Recall that $k_{\min}, k_{\max} \in \bZ$.
Fix $\tilde{k}_{\min}\leq k_{\min}$ and $\tilde{k}_{\max} \geq k_{\max}$, and set
$\tilde{k} = \tilde{k}_{\max} - \tilde{k}_{\min}$.
	Combining arguments from sections \ref{sss.v2} and \ref{sss.dim}, we see that the setting
$$
\tilde{K}_\phi(\tilde{k}_{\min}, \ldots ,\tilde{k}_{\max}) = \phi
$$
can be extended to $\tilde{K}_\phi:E_{\tilde{k}} \to \ccE'(\bR)$, which satisfies (i), (ii.e), (iii) and (iv).

The following questions seem natural:
\begin{itemize}
	\item[(Q.1)] Does $\tilde{K}_\phi$ admit an extension to $P_{\tilde{k}}$, which satisfies (i)-(iv)?
	\item[(Q.2)] If $\tilde{K}_\phi$ admits an extension to $P_{\tilde{k}}$, which satisfies (i)-(iv), is this extension unique?
\end{itemize}

Theorem \ref{thm:main_small} can be interpreted as follows.
Let $\tilde{K}_\phi:E_k \to \ccE'(\bR)$ satisfy (i), (ii.e), (iii), (iv).  
If $\tilde{K}_\phi$ admits an extension $\tilde{K}_\phi:P_k \to \ccE'(\bR)$ satisfying (i)-(iv), then there are $0 \leq k_\ell \leq k_r \leq k$ and $m \geq 0$
such that $\tilde{K}_\phi(0, \ldots, k) = D^m N(k_\ell, \ldots,
k_r)$. Clearly, this $\tilde{K}_\phi$ has at least one extension to $P_k$, which satisfies (i)-(iv): 
$\tilde{K}_\phi(a_0, \ldots, a_k) = D^m N(a_{k_\ell}, \ldots, a_{k_r})$.

We give examples of $\tilde{K}_\phi$ which are not of the form required by Theorem \ref{thm:main_small}. 
This leads to a negative answer to question (Q.1).
First, observe the  following:
\begin{itemize}
\item Each cardinal $B$-spline $N(0,\ldots, k)$ is totally refinable.
\item If $\phi$ is totally refinable and $p\geq 2$, $p \in \bZ$, then
	$\phi(\cdot/p)$ is also totally refinable.
\item If $\phi$ is totally refinable and $\mu \in \bZ$, then $\Delta_\mu \phi
	(\cdot)  = \phi(\cdot + \mu) - \phi(\cdot)$ is also totally refinable.
	If $\phi$ satisfies moment conditions of order $m$, then $\Delta_\mu
	\phi$ satisfies moment conditions of order $m+1$.
\item If $\phi$ is totally refinable and satisfies moment conditions of order $m$, $m \geq 1$, then
$D^{-m} \phi$  is totally refinable.
\end{itemize}
Applying these observations, it is possible to give examples of $F$   which satisfy (i), (ii.e), (iii) and (iv), but which are  not of the form required by
Theorem  \ref{thm:main_small}. 
\begin{itemize}
\item Fix $\rho \geq 1$ and $\nu \geq  2$. Put $k  = \rho \nu$, and define $F$
	on $E_k$ by translations and dilations of 
$F(0,1,\ldots, \rho \nu) = N(0, \nu, \ldots, \rho \nu)$. 
\item Fix $\rho \geq 1$, $\nu \geq 2$ and $\mu \in \bZ$.  Put $k  = \rho \nu + \mu$, and define $F$ on $E_k$ 
	by translations and dilations of 
$F(0,1,\ldots, \rho \nu + \mu) = N(\mu, \mu+ \nu, \ldots, \mu+ \rho \nu) - N(0, \nu, \ldots, \rho \nu)$. 
\item Fix $\rho \geq 1$, $\nu \geq  2$ and $\mu \in \bZ$.  Put $k  = \rho \nu + \mu$, and define $F$ on $E_k$ 
	by translations and dilations of 
$$F(0,1,\ldots, \rho \nu + \mu) (t) = \int_{-\infty}^t \big( N(\mu, \mu+ \nu, \ldots, \mu+ \rho \nu)(u) - N(0, \nu, \ldots, \rho \nu)(u) \big) du.$$
\end{itemize}

These (counter)-examples imply the negative answer to question (Q.1).

The answer to question (Q.2) is more complicated. Let $\tilde{K}_\phi(0, \ldots, k)$ be of the form required by Theorem \ref{thm:main_small}.
Then it admits an extension to $P_k$, which satisfies (i)-(iv). 
Theorem \ref{thm:main_small} identifies some 
subset $P_{k,e} = P_{k,e}(\tilde{K}_\phi)$, $E_k \subset P_{k,e}(\tilde{K}_\phi) \subset P_k$, such
that the extension of $\tilde{K}_\phi$ to $P_{k,e}(\tilde{K}_\phi)$ is unique.
Moreover,
Theorem \ref{thm:main_large} guarantees that if $\max(k_\ell , k - k_r) \leq 2$,
then the extension of $\tilde{K}_\phi$ to $P_k$ is unique. Now, we state the following:

\begin{prop}\label{prop.non.uniq}
Let the mapping $F:E_k \to \ccE'(\bR)$ be given by the formula  $F(a_0, \ldots, a_k) = D^m N(a_{k_\ell}, \ldots, a_{k_r})$.
If $k \geq 4$,  $ k_\ell < k_r$ and $\max(k_\ell, k-k_r) \geq 3$, then $F$ admits several extensions to $P_k$ satisfying (i)-(iv).
\end{prop}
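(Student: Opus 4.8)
The plan is to exhibit, besides the obvious extension $(a_0,\ldots,a_k)\mapsto D^mN(a_{k_\ell},\ldots,a_{k_r})$ on all of $P_k$, a second extension that agrees with $F$ on $E_k$ but differs elsewhere, thereby generalizing Example~\ref{ex:counter}. First I would reduce to the case $m=0$. Indeed, by Proposition~\ref{prop:Derivative} the operator $D$ maps solutions of (i)--(iv) to solutions, and by Fact~\ref{fact.2} it is injective on the compactly supported distributions occurring here (if $D^m(\mu-\nu)=0$ with $\mu-\nu$ compactly supported, then $\mu-\nu$ is a compactly supported polynomial, hence $0$). Thus two distinct extensions of $N(a_{k_\ell},\ldots,a_{k_r})$ yield, after applying $D^m$, two distinct extensions of $D^mN(a_{k_\ell},\ldots,a_{k_r})$; they still agree with $F$ on $E_k$ and satisfy (i). Moreover the system (i)--(iv) is preserved under the reflection $t\mapsto -t$, which interchanges the roles of $k_\ell$ and $k-k_r$ and commutes with translations and positive dilations; hence I may assume $k_\ell\geq 3$. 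This is exactly the statement that the three gaps $a_{k_\ell}-a_{k_\ell-1}$, $a_{k_\ell-1}-a_{k_\ell-2}$, $a_{k_\ell-2}-a_{k_\ell-3}$ immediately to the left of the support $[a_{k_\ell},a_{k_r}]$ all exist.

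Next I construct the second extension. Let $F_0(a_0,\ldots,a_k)=N(a_{k_\ell},\ldots,a_{k_r})$ be the standard extension. I choose a nonempty region $A\subset P_k$ that is invariant under translations and positive dilations and disjoint from $E_k$, defined by a relation among the three left gaps above, namely the direct generalization of the defining conditions of Example~\ref{ex:counter}: $a_{k_\ell-1}-a_{k_\ell-2}=2(a_{k_\ell-2}-a_{k_\ell-3})$ together with $a_{k_\ell}-a_{k_\ell-1}\leq a_{k_\ell-2}-a_{k_\ell-3}$. I then set $\tilde F(a_0,\ldots,a_k)=N(a_{k_\ell-1},a_{k_\ell+1},\ldots,a_{k_r})$ for $(a_0,\ldots,a_k)\in A$ (obtained from the standard spline by dropping the knot $a_{k_\ell}$ and moving the left support endpoint to $a_{k_\ell-1}$), and $\tilde F=F_0$ on $P_k\setminus A$. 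Since $A\neq\emptyset$ and on $A$ the supports $[a_{k_\ell-1},a_{k_r}]$ and $[a_{k_\ell},a_{k_r}]$ differ, we have $\tilde F\neq F_0$, so applying $D^m$ produces an extension genuinely different from the obvious one. Conditions (i), (iii), (iv) are immediate: the support is contained in $[a_0,a_k]$ in both cases, $A$ is translation and dilation invariant, and the B-splines involved are equivariant; and $\tilde F=F$ on $E_k$ because $A\cap E_k=\emptyset$.

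The real content is the span condition (ii). I would split the refinements into two types according to the position of the inserted point $a$. When $a$ lies in the support $(a_{k_\ell},a_{k_r})$ or is otherwise away from the switching window, $\tilde F$ coincides, on the old tuple and on both tuples $(\alpha_0,\ldots,\alpha_k)$, $(\alpha_1,\ldots,\alpha_{k+1})$ arising after rearrangement, with a single genuine B-spline family, so (ii) follows from the ordinary two-term B-spline refinement attached to the recurrence~\eqref{eq:spline_rec}. The delicate refinements are those whose added point falls in the three-gap window to the left of $a_{k_\ell}$, since these can toggle membership in $A$ and shift indices across the switch. For these I would verify directly that the old value still lies in $\lin\{\tilde F(\alpha_0,\ldots,\alpha_k),\tilde F(\alpha_1,\ldots,\alpha_{k+1})\}$; this is a finite case analysis according to which of the three tuples lie in $A$, and it is precisely here that the relation defining $A$ must be calibrated.

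I expect this boundary verification to be the main obstacle. The availability of three free gaps (that is, $k_\ell\geq 3$) is exactly what gives enough room to calibrate $A$ so that all cases close: with only two gaps the same analysis forces agreement with $F_0$, consistently with the uniqueness asserted in Theorem~\ref{thm:main_large} for $\max(k_\ell,k-k_r)\leq 2$. The concrete computation is the direct generalization of the one for Example~\ref{ex:counter} carried out in Appendix~\ref{ss.ext.counter}.
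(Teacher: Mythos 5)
Your proposal follows essentially the same route as the paper: reduce to $m=0$ by differentiating, use reflection to pass between $k_\ell\geq 3$ and $k-k_r\geq 3$, and generalize Example~\ref{ex:counter} by switching, on a translation- and dilation-invariant set $A$ determined by the three gaps to the left of $a_{k_\ell}$, to the spline $N(a_{k_\ell-1},a_{k_\ell+1},\ldots,a_{k_r})$ --- which is exactly the construction the paper obtains by combining the appendix counterexample (with $k_\ell=3$, $k_r=k$) with its ``increasing of dimension'' operation. The span-condition verification you defer is the same finite case analysis carried out in Appendix~\ref{ss.ext.counter}, and it does close with your choice of $A$.
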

The proof of Proposition \ref{prop.non.uniq} is based on an extension of Example
\ref{ex:counter}, which is given below in Appendix \ref{ss.ext.counter}. 
The proof of Proposition~\ref{prop.non.uniq} is completed in section \ref{ss.proof}.

\subsection{The form of $F:E_k \to \ccE'(\bR)$.}\label{equiv.s3}
 Let $F: P_k \to \ccE'(\bR)$ satisfy (i)-(iv).  Then $F(0, \ldots, k)$ is a
 compactly supported distribution, which is totally refinable.
Therefore, by Corollary~\ref{cor:refin},  there are $m \geq 0$, $\rho \geq 0$,
$j_{\min} \leq j_{\max}$ and coefficients $\{c_j, j_{\min} \leq j \leq j_{\max}\}$
with $c_{j_{\min}} \neq 0$ and $c_{j_{\max}} \neq 0$ such that
$$
F(0, \ldots , k) = \sum_{j= j_{\min}} ^{j_{\max}} c_j D^m N(j,\ldots, j+\rho).
$$
By Fact~\ref{fact.support},
$$
k_\ell = \min \supp F(0, \ldots, k) = j_{\min} , \quad k_r = \max \supp F(0,
\ldots, k) = j_{\max} + \rho.
$$
Setting $k' = k_r - k_\ell$, we see that
$$
\rho \leq \rho + j_{\max} - j_{\min} = k' \leq k.
$$
Now, define $G :E_{k'} \to \ccE'(\bR)$ by setting
$$
 G (k_\ell, \ldots, k_r) =  F(0, \ldots, k),
$$
and extend it to $E_{k'}$ by translations and  dilations, as in formula \eqref{eq:307}. Note that $G$ satisfies (i), (ii.e), (iii), (iv), cf. Section \ref{sss.v2}.
Observe that -- by definition of $G$ -- we have 
$$
k_\ell  =   \min \supp G(k_\ell, \ldots, k_r),
\quad
k_r  =  \max \supp G(k_\ell, \ldots, k_r).
$$

To summarize these considerations, we formulate the following:
\begin{prop}\label{prop:solequi}
 $G (0, \ldots, k') $ has one of the following forms:
		\begin{enumerate}
			\item 	There exist an integer $\rho$ with $1 \leq \rho \leq k'$ and coefficients  $\{\alpha_j, 0 \leq j \leq k' - \rho\}$, 
with $\alpha_0 \neq 0$ and $\alpha_{k' - \rho } \neq 0$ such that
$
G (0, \ldots, k') = \sum_{j=0}^{k' - \rho} \alpha_j N(j, \ldots, j + \rho).
$
			\item	There exist an integer $n \geq 0$ and coefficients $\{ h_j, 0 \leq j \leq k'\}$, with $h_0 \neq 0$ and $h_{k'} \neq 0$ such that 
$G(0,\ldots,k') = \sum_{j=0}^{k'}
				h_j D^{n} \delta_j$.
		\end{enumerate}
\end{prop}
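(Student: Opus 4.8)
The plan is to start from the representation of the totally refinable distribution $G(0,\ldots,k')$ furnished by Corollary~\ref{cor:refin}. Since $G(0,\ldots,k')$ is $F(0,\ldots,k)$ translated so that its support begins at $0$, translation invariance and the fact that $D$ commutes with translation turn the cited representation, after reindexing $i = j - j_{\min}$, into
\[
	G(0,\ldots,k') = \sum_{i=0}^{J} c_i\, D^m N(i,\ldots,i+\rho), \qquad c_0\neq 0,\ c_J\neq 0,
\]
where $J = j_{\max}-j_{\min}$ and $J+\rho = k'$. The whole argument then reduces to rewriting each building block $D^m N(i,\ldots,i+\rho)$ in one of two ways according to whether $m<\rho$ or $m\geq\rho$, and collecting terms.

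The first step is the cardinal derivative identity. From the recursion~\eqref{eq:spline_rec}, specialized to unit-spaced knots, one obtains
\[
	D N(i,\ldots,i+\rho) = N(i,\ldots,i+\rho-1) - N(i+1,\ldots,i+\rho), \qquad \rho\geq 1,
\]
where for $\rho=1$ this reads $D N(i,i+1) = \delta_i - \delta_{i+1}$, consistent with the conventions $N(i,i+1)=\charfun_{(i,i+1]}$ and $N(j)=\delta_j$. Iterating this difference-and-lower-order operation, for $m\leq\rho$ I get
\[
	D^m N(i,\ldots,i+\rho) = \sum_{l=0}^{m} (-1)^l \binom{m}{l} N(i+l,\ldots,i+l+\rho-m),
\]
and for $m>\rho$, applying $D^{m-\rho}$ to the case $m=\rho$ yields $D^m N(i,\ldots,i+\rho) = \sum_{l=0}^{\rho}(-1)^l\binom{\rho}{l} D^{m-\rho}\delta_{i+l}$.

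I then split into the two cases. If $m<\rho$, put $\rho'=\rho-m\geq 1$; substituting the first formula and collecting by the starting index $j$ gives $G(0,\ldots,k') = \sum_{j}\alpha_j N(j,\ldots,j+\rho')$, a combination of cardinal B-splines of order $\rho'$ with $1\leq\rho'\leq k'$. The smallest index $j=0$ receives a contribution only from $(i,l)=(0,0)$, so $\alpha_0=c_0\neq 0$, and the largest index $j=J+m=k'-\rho'$ only from $(i,l)=(J,m)$, so $\alpha_{k'-\rho'}=(-1)^m c_J\neq 0$; this is form (1). If instead $m\geq\rho$, put $n=m-\rho\geq 0$; the second formula gives $G(0,\ldots,k')=\sum_j h_j D^n\delta_j$ with $0\leq j\leq J+\rho=k'$, and the extreme terms yield $h_0=c_0\neq 0$, $h_{k'}=(-1)^\rho c_J\neq 0$, which is form (2).

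The routine-but-delicate part, and the main thing to get right, is the derivative identity together with its base case $\rho=1$ where B-splines degenerate into Dirac deltas, and the bookkeeping showing that the extreme knot-start indices each receive exactly one contribution, so that the endpoint coefficients cannot vanish; alternatively, the non-vanishing of those endpoint coefficients can be read off directly from Fact~\ref{fact.support} combined with the identities $\min\supp G(0,\ldots,k')=0$ and $\max\supp G(0,\ldots,k')=k'$.
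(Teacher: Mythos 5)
Your proof is correct and follows essentially the same route as the paper, which presents Proposition~\ref{prop:solequi} as a summary of Corollary~\ref{cor:refin}, Fact~\ref{fact.support} and the definition of $G$; you merely make explicit the reduction step (absorbing $D^m$ into the B-spline order via the cardinal difference identity $DN(i,\ldots,i+\rho)=N(i,\ldots,i+\rho-1)-N(i+1,\ldots,i+\rho)$) that the paper leaves implicit. Your bookkeeping of the endpoint coefficients, and the alternative of reading their non-vanishing off from Fact~\ref{fact.support} together with $\min\supp G(0,\ldots,k')=0$ and $\max\supp G(0,\ldots,k')=k'$, are both sound.
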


Next, we collect more information about the coefficients $(h_j)$ appearing in
item (2) of Proposition~\ref{prop:solequi}.

\begin{prop}\label{prop:polyequi}
	Suppose that $F$ satisfies conditions (i)-(iv).

	If $G(0,\ldots,k') = F(-k_\ell,\ldots,k-k_\ell) = \sum_{j=0}^{k'} h_j
	D^n\delta_j$ for some non-negative integer $n$, the
	zeros of the polynomial $p(w) = \sum_{j=0}^{k'} h_j w^j$ are
	contained in the set 
	\[
	 \{ \exp(2\pi i \nu/j) : j=1,\ldots,k'; \nu=0,\ldots, j-1 \}.
	\]

Moreover, if $p\big(\!\exp(2\pi i \nu_0/j)\big) = 0$ for some $j \in \{1,\ldots,k'\}$ and
some $\nu_0\in \{0,\ldots,j-1\}$, we have $p\big(\!\exp(2\pi i \nu /j)\big)=0$ for all
$\nu\in \{0,\ldots,j-1\}$. 
\end{prop}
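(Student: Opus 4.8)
The plan is to move to the Fourier side and turn total refinability of $\phi := G(0,\ldots,k')=\sum_{j=0}^{k'} h_j D^n\delta_j$ into a divisibility property of $p$. Recall from Sections~\ref{equiv.s2}--\ref{sss.v2} that $\phi$ is totally refinable, so for every integer $q\ge 2$ there is a finitely supported mask $(c_\ell)_{\ell\in\bZ}$ with $\phi=\sum_\ell c_\ell\,\phi(q\cdot-\ell)$. Writing $C_q(z)=\sum_\ell c_\ell z^\ell$ (a Laurent polynomial) and using $\widehat{\delta_j}(\xi)=e^{-ij\xi}$ together with $\widehat{D^nT}(\xi)=(i\xi)^n\hat T(\xi)$, one has $\hat\phi(\xi)=(i\xi)^n p(e^{-i\xi})$. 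First I would transform the refinement equation: since $\widehat{\phi(q\cdot-\ell)}(\xi)=q^{-1}e^{-i\ell\xi/q}\hat\phi(\xi/q)$, the equation becomes $\hat\phi(\xi)=q^{-1}C_q(e^{-i\xi/q})\hat\phi(\xi/q)$. Substituting $\hat\phi(\xi)=(i\xi)^n p(e^{-i\xi})$ and cancelling the common factor $(i\xi)^n$ gives, after the substitution $z=e^{-i\xi/q}$, the identity
$$p(z^q)=q^{-(n+1)}C_q(z)\,p(z).$$
Note that the derivative order $n$ contributes only the factor $(i\xi)^n$, which cancels, so $p$ obeys this relation independently of $n$. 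The identity holds for $z$ on the unit circle and, both sides being Laurent polynomials, it then holds for all $z\in\bC\setminus\{0\}$.

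The second step is the structural consequence. Since $p(0)=h_0\neq 0$, no root of $p$ is $0$, and the displayed identity shows that whenever $p(z_0)=0$ we also have $p(z_0^q)=0$. Hence the finite, nonzero set $Z$ of roots of $p$ is invariant under $z\mapsto z^q$ for \emph{every} integer $q\ge 2$. Third, I would deduce that every element of $Z$ is a root of unity: for $z_0\in Z$ all the points $z_0^{q^m}$, $m\ge 0$, lie in the finite set $Z$, so comparing moduli forces $|z_0|=1$ (otherwise $|z_0|^{q^m}$ would take infinitely many values), and writing $z_0=e^{2\pi i\theta}$, finiteness of the orbit of $\theta$ under $\theta\mapsto q\theta\bmod 1$ forces $\theta\in\mathbb{Q}$, i.e. $z_0$ is a root of unity.

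Fourth comes the crux, which yields the ``full orbit'' assertion. Let $z_0\in Z$ have order $d$. Running $q$ through $2,3,\ldots,d+1$ (all $\ge 2$, hence admissible), the exponents $q\bmod d$ exhaust $\bZ/d\bZ$, so the images $z_0^q$ run through \emph{all} $d$-th roots of unity; by invariance of $Z$, every $d$-th root of unity is a zero of $p$. This is precisely the second assertion, read with $j$ equal to the order $d$ of the root in question. Finally, since $Z$ then contains the $d$ distinct $d$-th roots of unity while $|Z|\le\deg p=k'$, we obtain $d\le k'$; thus every root of $p$ is a $d$-th root of unity with $d\le k'$, which is exactly the claimed containment in $\{\exp(2\pi i\nu/j):j=1,\ldots,k';\ \nu=0,\ldots,j-1\}$.

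I expect the main obstacle to be the bookkeeping in establishing the symbol identity: getting the normalizing powers of $q$ correct, justifying the cancellation of $(i\xi)^n$ (as an identity of entire functions), and upgrading an identity valid only on the unit circle to a genuine Laurent-polynomial identity so that one may legitimately evaluate at arbitrary roots of $p$. The number-theoretic part is then routine, the one genuinely essential point being that refinability is available for \emph{all} $q\ge 2$ simultaneously: this is what drives the passage from a single root of unity of order $d$ to the complete set of $d$-th roots, and a single dilation factor would not suffice.
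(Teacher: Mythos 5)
Your proof is correct and follows essentially the same route as the paper: the symbol identity $p(z^q)=q^{-(n+1)}C_q(z)\,p(z)$ that you obtain on the Fourier side is exactly the paper's equation \eqref{eq:pG} (derived there by applying the refinement relation to the test functions $t\mapsto e^{trz}$, which is the same computation), and both arguments then conclude by observing that the finite zero set of $p$ is closed under $z\mapsto z^q$ for every $q\geq 2$. Your endgame (modulus one, rational angle, and the residues $q\bmod d$ exhausting $\bZ/d\bZ$) is a slightly more explicit rendering of the paper's finiteness argument and correctly reads the ``moreover'' clause as a statement about the full set of $d$-th roots of unity for $d$ the order of the given root.
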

\begin{proof}
By the conditions (i), (ii.e), (iii), (iv) on $G$ for equidistant partitions, for all 
positive integers $r$, there exist coefficients $(\mu_m)$ satisfying
\begin{equation}\label{eq:1a}
	G(0,\ldots,k') = \sum_{m=0}^{k'(r-1)} \mu_m G\Big(
	\frac{m}{r},\ldots,\frac{m+k'}{r}\Big).
\end{equation}
By dilation invariance of $G$, for some coefficient $\lambda\neq 0$,
\[
	G(0,\ldots,k'/r;t) = \lambda G(0,\ldots,k';rt) = \lambda
	\sum_{j=0}^{k'} h_j
	(D^n\delta_j) (rt)
	= \frac{\lambda}{r^{n+1}} \sum_{j=0}^{k'} h_j D^n \delta_{j/r}(t).
\]
Therefore, translation invariance implies that equation \eqref{eq:1a} becomes
\begin{equation}\label{eq:1b}
	\sum_{j=0}^{k'} h_j D^n\delta_j = \sum_{m=0}^{k'(r-1)}
	\mu_m \sum_{j=0}^{k'} h_j D^n \delta_{(m+j)/r},
\end{equation}
for possibly different coefficients $(\mu_m)$ than the ones in \eqref{eq:1a}.
Applying to both sides the function $t\mapsto e^{t r z}$ for an arbitrary nonzero 
complex number $z$,
\[
	\sum_{j=0}^{k'} h_{j} e^{j r z } = \sum_{m=0}^{k'(r-1)}
	\mu_m e^{m z} \sum_{j=0}^{k'} h_j e^{j z}
\]
or equivalently
\begin{equation}\label{eq:pG}
	p(e^{rz}) = p(e^z)\sum_{m=0}^{k'(r-1)} \mu_m e^{m z}.
\end{equation}
This has consequences for the zeros of the polynomial $p$. First we observe that
$p(0)\neq 0$ by the fact that $h_0\neq 0$. Then, if $z$ is
such that $p(e^z)=0$, by equation \eqref{eq:pG} also $p(e^{rz})=0$. As the polynomial
$p$ has at most $k'$ zeros (including multiplicities), the zeros of $p$ must be contained in the set
\[
	 \{ \exp(2\pi i \nu/j) : j=1,\ldots,k'; \nu=0,\ldots, j-1 \},
\]
since otherwise $p(e^z)=0\implies p(e^{rz})=0$ would produce more than $k'$ zeros
of $p$.
Moreover, if $p(\exp(2\pi i \nu_0/j)) = 0$ for some $j \in \{1,\ldots,k'\}$ and
some $\nu_0\in \{0,\ldots,j-1\}$, we must have $p(\exp(2\pi i \nu /j))$ for all
$\nu\in \{0,\ldots,j-1\}$. Indeed, choosing $r$ that is relatively prime to
$j$, iterating the implication $p(e^z)=0\implies p(e^{rz})=0$ starting with
$z=2\pi i \nu_0/j$ and using the periodicity of the exponential function, we
obtain $p(\exp(2\pi i \nu/j))=0$ for all $\nu\in \{0,\ldots,j-1\}$.
\end{proof}

\section{General partitions and the proof of Theorem~\ref{thm:main_small}}\label{sec:generalpart}

First, we can use the information gained for equidistant partitions to prove a
result similar to Proposition \ref{prop:solequi}, but for general partitions
$(a_0,\ldots,a_k)\in P_k$.

	\begin{prop}\label{prop:sol}
	Suppose that $F$ satisfies conditions (i)-(iv).

	Then, there exists an integer $\rho$ with $\rho\leq k'$ so that for all 
		$(a_0,\ldots,a_k)\in P_k$, the following two
		statements are true:
		\begin{enumerate}
			\item If $1\leq \rho\leq k'$, $F(a_0,\ldots,a_k) =
				\sum_{j=0}^{k-\rho} \alpha_j
				N(a_j,\ldots,a_{j+\rho})$ for some coefficients
				$(\alpha_j)$ depending on $(a_0,\ldots,a_k)$. 
		\item	If $\rho\leq 0$, $F(a_0,\ldots,a_k) =
			\sum_{j=0}^{k}\alpha_j 
			D^{-\rho}
			\delta_{a_j}$
			for some coefficients $(\alpha_j)$ depending on
			$(a_0,\ldots,a_k)$.
	\end{enumerate}
\end{prop}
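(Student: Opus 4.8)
The plan is to fix once and for all the integer $\rho$ produced by Proposition \ref{prop:solequi}: in its case (1) let $\rho\ge1$ be the B-spline order, and in its case (2) set $\rho:=-n\le0$. Both alternatives of Proposition \ref{prop:sol} then reduce to the single assertion that off the knots $F(a_0,\ldots,a_k)$ agrees with a polynomial of degree $\le\rho-1$; for $\rho\le0$ this simply reads $\supp F(a_0,\ldots,a_k)\subseteq\{a_0,\ldots,a_k\}$. I would treat this discrete statement as the core and reduce the function case to it by differentiation: by Proposition \ref{prop:Derivative} the map $D^\rho F$ again satisfies (i)--(iv), and its equidistant value is obtained by differentiating an order-$\rho$ spline $\rho$ times, hence is a combination of Dirac masses at the integer knots, i.e. the delta form of case (2) with $n=0$.

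The heart of the argument is then the claim that, for any solution of (i)--(iv) whose equidistant value is a combination of (derivatives of) Dirac masses at the integer knots, one has $\supp F(a_0,\ldots,a_k)\subseteq\{a_0,\ldots,a_k\}$ for every $(a_0,\ldots,a_k)\in P_k$. To prove it I would use dilation invariance (iv) to place $(a_0,\ldots,a_k)$ on a grid $\frac1q\bZ$ and induct on the grid-diameter $q(a_k-a_0)$, the base case being the equidistant one, where the conclusion is exactly the form given by Proposition \ref{prop:solequi}. For the inductive step I pick a rational interior grid point $a\in\bigcup_j(a_j,a_{j+1})$; the span condition (ii) gives $F(a_0,\ldots,a_k)\in\lin\{F(\pi_L),F(\pi_R)\}$, where $\pi_L$ and $\pi_R$ arise by inserting $a$ and deleting $a_k$, respectively $a_0$. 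Both tuples have strictly smaller grid-diameter, so by induction their values are supported on their own knots, whence $\supp F(a_0,\ldots,a_k)\subseteq\{a_0,\ldots,a_k\}\cup\{a\}$. Running the same step with a second interior grid point $a'\ne a$ and intersecting the two support bounds deletes the spurious point and leaves $\supp F(a_0,\ldots,a_k)\subseteq\{a_0,\ldots,a_k\}$.

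With the discrete core in hand, case (2) follows at once: expressing $F(a_0,\ldots,a_k)$ through the span condition in terms of its explicit equidistant neighbours exhibits it as a combination of $D^n\delta$'s of the single uniform order $n$, and support on the knots turns this into $\sum_j\alpha_j D^n\delta_{a_j}$. For case (1) I would integrate back: from $D^\rho F(a_0,\ldots,a_k)=\sum_j\beta_j\delta_{a_j}$ one recovers $F(a_0,\ldots,a_k)=p+\sum_j\beta_j(\cdot-a_j)_+^{\rho-1}/(\rho-1)!$ with $\deg p\le\rho-1$; the support condition (i) to the left of $a_0$ forces $p\equiv0$ and to the right of $a_k$ forces the truncated-power sum to vanish there, so that $F(a_0,\ldots,a_k)$ is a spline of order $\rho$ with knots $a_0,\ldots,a_k$ and support in $[a_0,a_k]$, that is, an element of $\lin\{N(a_j,\ldots,a_{j+\rho}):0\le j\le k-\rho\}$.

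The main obstacle is the inductive step in the degenerate configurations having exactly one interior grid point, i.e. grid-diameter one above the equidistant minimum: there no second interior point $a'$ is available, and refining the grid merely recreates the same degeneracy at a finer scale, so the diameter induction does not close on its own. I expect to dispatch these configurations separately, by observing that inserting the unique skipped grid point yields two genuinely equidistant neighbours whose values are completely explicit, and by combining this relation with the consistency forced by a finer insertion --- or with the equidistant refinement identities underlying Proposition \ref{prop:polyequi} --- to force the coefficient at the skipped point to vanish. Establishing this degenerate case, and thereby the well-foundedness of the whole induction, is where I expect the real difficulty to lie; the differentiation and integration reductions and the support-intersection step are otherwise essentially formal.
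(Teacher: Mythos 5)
Your outer reductions (differentiating $\rho$ times to pass to the Dirac case, integrating back and using the support condition to land in $\lin\{N(a_j,\ldots,a_{j+\rho})\}$, and combining the integer-refinement representation with a support bound to fix the uniform order $n$) are sound, and your diameter induction with the two-point intersection trick is a genuinely different organization from the paper's. But the gap you flag at the end is real and load-bearing, not a technicality you can defer. On the grid $\frac1q\bZ$ a tuple of diameter $d=q(a_k-a_0)$ has exactly $d-k$ interior grid points, so the configurations with $d=k+1$ admit only one admissible insertion and the intersection step is unavailable; yet the inductive step at diameter $k+2$ already invokes the conclusion for neighbours of diameter $k+1$, so without this case the entire induction fails to launch. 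Moreover, the degenerate case cannot be settled from the single two-term span relation $F(a_0,\ldots,a_k)=\lambda F(0,\ldots,k)+\mu F(1,\ldots,k+1)$ alone: at this stage of the argument the coefficients $h_s$ of the equidistant solution are constrained only by Proposition~\ref{prop:polyequi}, and nothing in that relation by itself forces the coefficient $\lambda h_{i+1-k_\ell}+\mu h_{i-k_\ell}$ at the skipped point to vanish. Your suggested fixes (``a finer insertion'' or ``the identities underlying Proposition~\ref{prop:polyequi}'') point in the right direction but also pull against your induction: refining the grid doubles the diameter, so the refined configuration is not covered by the inductive hypothesis.

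The paper closes exactly this hole --- and in fact dispenses with the induction altogether --- by a prime-step refinement localized to each gap. Having established from the integer refinement that $F(n_0,\ldots,n_k)$ is a polynomial of order $\rho$ (or vanishes) on every unit interval $(j,j+1)$, one fixes a gap $(n_i,n_{i+1})$, chooses a prime $p>\max(k,n_{i+1}-n_i)$, and inserts the equidistant points $n_i+j(n_{i+1}-n_i)/p$; the span and support conditions then show that on $(n_i,n_{i+1})$ the distribution $F(n_0,\ldots,n_k)$ is also piecewise polynomial of order $\rho$ (or piecewise zero) with breakpoints only at multiples of $(n_{i+1}-n_i)/p$. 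Since $p$ is prime and exceeds $n_{i+1}-n_i$, the two families of breakpoints share no interior point of $(n_i,n_{i+1})$, so the two piecewise structures patch to a single polynomial (respectively, to zero) on all of $(n_i,n_{i+1})$, eliminating every spurious interior knot at once. If you want to keep your induction, you would need to import precisely this incommensurability argument to handle the diameter-$(k+1)$ configurations; as written, the proposal does not prove the proposition.
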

\begin{proof}
	By translation and dilation invariance of $F$ and the definition of the
	set $P_k$, we can assume that $(a_0,\ldots,a_k)=(n_0,n_1,\ldots,n_k)$ consists of an increasing
	sequence of integers with $n_0=0$ and by the span condition of $F$,
	\[
		F(n_0,n_1,\ldots,n_k) = \sum_{j=0}^{n_k - k} \mu_j F(j,\ldots,j+k) =
		\sum_{j=0}^{n_k - k} \mu_j G(j+k_\ell,\ldots,j+k_r)
	\]
for some coefficients $(\mu_j)$. The form of $G(j+k_\ell,\ldots,j+k_r)$ given by
Proposition \ref{prop:solequi} yields
that
$F(n_0,n_1,\ldots,n_k)=\sum_{j=n_0}^{n_k-\rho} \alpha_j N(j,\ldots,j+\rho)$ if $1\leq \rho\leq k'$ and
of the form $F(n_0,\ldots,n_k) = \sum_{j=n_0}^{n_k} \alpha_j D^{-\rho}\delta_j$ 
if $\rho\leq 0$. 
This implies that on each interval $(j,j+1)$, $F(n_0,\ldots,n_k)$ is a polynomial of order $\rho$ 
if $\rho\geq 1$ and vanishes identically if $\rho \leq 0$.
We need to check that the same statement is true for the intervals
$(n_i,n_{i+1})$.

Let  $i\in \{0,\ldots,k-1\}$
and let $p>\max(k,n_{i+1}-n_i)$ be a prime number so that 
$n_i-k(n_{i+1}-n_i)/p >
n_{i-1}$ if $i\geq 1$ and $n_{i+1} + k(n_{i+1}-n_i)/p < n_{i+2}$ if $i\leq
k-2$. We refine the partition $n_0 < n_1 < \cdots < n_k$ equidistantly in the vicinity of
$(n_i,n_{i+1})$ by inserting the points $n_i + j(n_{i+1} - n_i)/p$ for $j =  -k
, \ldots, p+k$.
By the span and support condition on $F$, the following identity holds on the interval
$(n_i,n_{i+1})$:
\begin{align*}
	F(n_0,\ldots,n_k) &= \sum_{j=-k+1}^{p-1} \lambda_j F\Big( n_i +
	\frac{j(n_{i+1}-n_i)}{p},\ldots,n_i +
	\frac{(j+k)(n_{i+1}-n_i)}{p}\Big) \\
	&=\sum_{j=-k+1}^{p-1} \lambda_j G\Big( n_i +
	\frac{(j+k_\ell)(n_{i+1}-n_i)}{p},\ldots,n_i +
	\frac{(j+k_r)(n_{i+1}-n_i)}{p}\Big).
\end{align*}
As above, by Proposition~\ref{prop:solequi},
$F(n_0,\ldots,n_k)$ is a polynomial of order  $\rho$ if $1\leq \rho\leq k'$ and
equals zero if $\rho\leq 0$ on the intervals $I_j=\big( n_i + j(n_{i+1} - n_i)/p, n_i +
(j+1)(n_{i+1}-n_i)/p\big)$. Since $p$ is prime, the
intervals $I_j$ for $j=0,\ldots,p-1$ and $(j,j+1)$ for $j =
n_i,\ldots,n_{i+1}-1$ cover the interval $(n_i,n_{i+1})$ and combining the above
assertions for $F$ yields that
$F(n_0,n_1,\ldots,n_k)=\sum_{j=0}^{k-\rho} \alpha_j N(n_j,\ldots,n_{j+\rho})$ if $1\leq \rho\leq k'$ and
of the form $F(n_0,\ldots,n_k) = \sum_{j=0}^{k} \alpha_j D^{-\rho}\delta_{n_j}$ 
if $\rho\leq 0$. 
\end{proof}

In order to gain even more information about the coefficients $(\alpha_j)$ above, we look at
the following analogue of the numbers $k_\ell,k_r$ for non-equidistant points.

\begin{defin}
	Let $F$ be a mapping satisfying (i)--(iv) and $(a_0,\ldots,a_k)\in P_k$. 

	If $F(a_0,\ldots,a_k)$ does not vanish identically,
choose $i,j$ so that $a_i = \min\supp F(a_0,\ldots,a_k)$ and $a_j = \max\supp
F(a_0,\ldots,a_k)$ and set 
\[
	\ell(a_0,\ldots,a_k) = i \qquad \text{and} \qquad  r(a_0,\ldots,a_k) =
	j.
\]
If $F(a_0,\ldots,a_k)$ vanishes identically, 
we set $\ell(a_0,\ldots,a_k)=\infty$ and $r(a_0,\ldots,a_k)=-\infty$.
\end{defin}

In view of Proposition~\ref{prop:sol} and Fact~\ref{fact.support}, the mappings 
$\ell$ and $r$ are well defined on $P_k$.

The subsequent results including Corollary~\ref{cor:Pe_ks} give information about the values of
$\ell(a_0,\ldots,a_k)$ and $r(a_0,\ldots,a_k)$ for certain points
$(a_0,\ldots,a_k)\in P_k$.

\begin{lem}\label{lem:contraction}
	Suppose that $F$ satisfies conditions (i)-(iv).
	
	For an increasing sequence of integers $a_0 <
	\cdots < a_k$  satisfying $a_i - a_{i-1} = 1$
	for $i\leq i_0$ we have
	\[
		\ell(a_0,\ldots,a_k) \geq \min\{i_0+1,k_\ell\}.
	\]

	Similarly, for an increasing sequence of integers $a_0 <
	\cdots < a_k$ satisfying $a_{i+1} - a_{i} = 1$
	for $i\geq i_0$, we have
	\[
		r(a_0,\ldots,a_k) \leq \max\{i_0-1,k_r\}.
	\]
\end{lem}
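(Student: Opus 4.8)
The statement is about how the left and right endpoints of the support of $F(a_0,\ldots,a_k)$ behave when the partition $(a_0,\ldots,a_k)$ has a block of equidistant (spacing $1$) points at the beginning (or the end). I'll focus on the first inequality, $\ell(a_0,\ldots,a_k)\geq\min\{i_0+1,k_\ell\}$; the second follows by the obvious left-right symmetry.

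Let me unpack what needs to be shown. The claim $\ell(a_0,\ldots,a_k)\geq m$ (for $m=\min\{i_0+1,k_\ell\}$) means that $\min\operatorname{supp}F(a_0,\ldots,a_k)\geq a_m$, i.e. $F(a_0,\ldots,a_k)$ vanishes on $(-\infty,a_m)$, or equivalently its support does not reach the points $a_0,\ldots,a_{m-1}$.

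Let me think about the mechanism.

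The key structural input is the span condition (ii), which lets me express $F$ on a coarse partition in terms of $F$ on a refined one, and the known form from Proposition~\ref{prop:sol}, together with the definition of $k_\ell$ as the left endpoint of the support of the equidistant representative $F(0,\ldots,k)=G(k_\ell,\ldots,k_r)$. Since the first $i_0$ gaps of $(a_0,\ldots,a_k)$ are of size $1$, the initial segment $(a_0,\ldots,a_{i_0})$ looks locally like an equidistant partition. The strategy is to relate $F(a_0,\ldots,a_k)$ near its left end to an equidistant translate, where I already control the support through $k_\ell$.

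**Approach.**

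First I would reduce to understanding the behavior of $F$ on the leftmost interval $(a_0,a_1)$ and the subsequent unit intervals up to $a_{i_0}$. The plan is to use the span condition repeatedly to peel off the leftmost point: by inserting equidistant refinement points to the left of or near $a_0,a_1,\ldots$, I can write $F(a_0,\ldots,a_k)$ as a linear combination of shifted copies of $F$ on partitions that, near the left end, are genuinely equidistant with spacing $1$. On such equidistant pieces, the support begins at the $k_\ell$-th node by the very definition of $k_\ell$ and by translation invariance (iii). This should force $F(a_0,\ldots,a_k)$ to vanish on $(-\infty,a_m)$ for $m$ as small as $k_\ell$, giving the bound $\ell\geq k_\ell$ whenever $i_0$ is large enough, and more generally $\ell\geq\min\{i_0+1,k_\ell\}$ when the equidistant block is shorter.

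More concretely, I would proceed as follows. Pick a point $a\in(a_0,a_1)$ and apply (ii): $F(a_0,\ldots,a_k)$ lies in the span of $F(a,a_1,\ldots,a_k)$ and $F(a_0,a,a_1,\ldots,a_{k-1})$ (after increasing rearrangement). Iterating this kind of insertion on the initial unit-spaced block, I can arrange that the relevant refined partitions agree with a global equidistant integer partition on an initial segment of length at least $i_0+1$. For each such piece I invoke the equidistant analysis of Section~\ref{s.equi.new}: the representative $G$, hence $F$, vanishes to the left of its $k_\ell$-th node. Taking the common lower bound over the finitely many pieces appearing in the linear combination, and tracking where the node index $k_\ell$ lands relative to the length $i_0$ of the equidistant block, yields the bound $\min\{i_0+1,k_\ell\}$. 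The appearance of $i_0+1$ rather than $k_\ell$ in the regime $i_0+1<k_\ell$ reflects that once the equidistant block is exhausted at $a_{i_0}$, I can no longer certify vanishing past the node $a_{i_0+1}$, so the support could in principle begin as early as $a_{i_0+1}$.

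**Main obstacle.**

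The delicate point, which I expect to be the crux, is the bookkeeping that matches the index $k_\ell$ (defined for the fully equidistant partition $(0,\ldots,k)$) with the endpoint index $\ell$ for the merely partially equidistant partition $(a_0,\ldots,a_k)$, while correctly taking the minimum with $i_0+1$. One must ensure that the refinement and span manipulations do not inadvertently push support mass further left than the equidistant analysis permits, and that the finitely many partitions entering the linear combination all respect the same left-vanishing on the initial block. In particular I must verify that a point $a_j$ with $j<\min\{i_0+1,k_\ell\}$ is never in the support: for such $j$ the local equidistant structure guarantees vanishing, and the argument must show this survives the passage from the refined combination back to $F(a_0,\ldots,a_k)$ via the support condition (i). Once this alignment is made precise, the inequality follows; the symmetric argument, inserting equidistant points at the right end and using the definition of $k_r$, gives the bound on $r(a_0,\ldots,a_k)$.
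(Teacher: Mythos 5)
Your overall strategy --- refine via the span condition and import the support information from the equidistant case --- is the same as the paper's, but the execution you describe has a genuine gap. You propose to reduce to partitions that are ``near the left end genuinely equidistant'' and then claim that on such pieces ``the support begins at the $k_\ell$-th node by the very definition of $k_\ell$.'' The definition of $k_\ell$ only controls the support of $F$ on the \emph{fully} equidistant partition $(0,\ldots,k)$ and its translates and dilates. The assertion that a partition which is equidistant merely on an initial block already has its support starting at the $k_\ell$-th node is precisely the much harder Lemma~\ref{lem:lreq}, which is proved \emph{after} the present lemma and whose proof uses it (via \eqref{eq:facts1}); invoking that fact here would be circular. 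Two smaller problems: your concrete first step inserts a point $a\in(a_0,a_1)$, but when $i_0\geq 1$ that interval already has length $1$, so splitting it destroys the unit spacing you are trying to exploit; and the span condition only permits inserting points strictly inside $(a_0,a_k)$, so nothing can be inserted to the left of $a_0$.

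The fix is simpler than what you outline, and it is what the paper does: normalize $a_0=0$ (so $a_i=i$ for $i\leq i_0$) and refine by inserting \emph{all} integers of $[0,a_k]$. Iterating the span condition gives $F(a_0,\ldots,a_k)=\sum_{j\geq 0}\mu_j F(j,\ldots,j+k)$, where now every window $(j,\ldots,j+k)$ is fully equidistant; hence by translation invariance each $F(j,\ldots,j+k)$ vanishes on $(-\infty,j+k_\ell)\supseteq(-\infty,k_\ell)$, and so does $F(a_0,\ldots,a_k)$. Since $\min\supp F(a_0,\ldots,a_k)$ is one of the knots $a_j$ and $a_i=i$ for $i\leq i_0$, this forces $\ell(a_0,\ldots,a_k)\geq\min\{i_0+1,k_\ell\}$ --- your closing discussion of where the minimum comes from is correct once this vanishing is actually in hand.
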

\begin{proof}
	We only prove the assertion for $\ell$ and without loss of generality, assume that $a_{0}=0$ and refine
	by inserting all integers as points.
	We obtain by the span condition of $F$ that
	\begin{equation}\label{eq:span10}
		F(a_0,\ldots,a_k) = \sum_{j\geq 0} \mu_j F(j,\ldots,j+k)
	\end{equation}
	for some coefficients $(\mu_j)$. The right hand side vanishes
	identically on
	$(-\infty,k_\ell)$
	and therefore the left hand side vanishes identically on the same
	interval which implies $\ell(a_0,\ldots,a_k) \geq \min\{ i_0 +1,
	k_\ell\}$.
\end{proof}

\begin{lem}\label{lem:lreq}
	Suppose that $F$ satisfies conditions (i)-(iv).

	Let $a_{k_\ell}
	< \cdots < a_{k}$ be an increasing sequence of integers and  
	set $a_0 < \cdots < a_{k_\ell-1} < a_{k_\ell}$
	so that this sequence is equidistant with $a_j - a_{j-1}
	= 1$ for every $j=1,\ldots,k_\ell$.
	Then,
	\begin{equation}\label{eq:l}
		\ell(a_0,\ldots,a_k) = k_\ell.
	\end{equation}
	
	Similarly, let $a_0' < \cdots < a_{k_r}'$ be an increasing sequence of
	integers and set $a_{k_r}' <a_{k_r+1}' < \cdots <
	a_k'$ so that this sequence  is equidistant with $a_j' - a_{j-1}'
	= 1$ for every $j=k_r+1,\ldots,k$. Then
	\begin{equation}\label{eq:r}
		r(a_0',\ldots,a_k') = k_r.
	\end{equation}
\end{lem}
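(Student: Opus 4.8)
The plan is to establish the two assertions symmetrically, so I would only treat \eqref{eq:l}; then \eqref{eq:r} follows by the mirror argument (reflecting $t\mapsto -t$, which interchanges the roles of $\ell$ and $r$ and of $k_\ell$ and $k_r$). Write $C=(a_0,\ldots,a_k)$ for the configuration in question. Since $a_i-a_{i-1}=1$ for $i\le k_\ell$, Lemma~\ref{lem:contraction} applied with $i_0=k_\ell$ gives the lower bound $\ell(C)\ge\min\{k_\ell+1,k_\ell\}=k_\ell$ for free. Everything therefore reduces to the reverse inequality $\ell(C)\le k_\ell$, i.e.\ to showing that $\supp F(C)$ reaches down to $a_{k_\ell}$. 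Equivalently, in the representation of $F(C)$ furnished by Proposition~\ref{prop:sol}, the leftmost admissible coefficient --- that of $N(a_{k_\ell},\ldots,a_{k_\ell+\rho})$ when $1\le\rho\le k'$, respectively of $D^{-\rho}\delta_{a_{k_\ell}}$ when $\rho\le 0$ --- must be nonzero.

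For the reverse inequality I would argue by induction on the \emph{right span} $w(C):=a_k-a_{k_\ell}$. The base case $w=k-k_\ell$ is the unit grid, where $F(C)$ is a translate of $F(0,\ldots,k)$ and $\ell(C)=k_\ell$ by definition of $k_\ell$. For the inductive step, let $j_0\ge k_\ell$ be the position of the first non-unit gap and insert an integer node into $(a_{j_0},a_{j_0+1})$; the span condition yields $F(C)=c_1F(L)+c_2F(R)$, where $L$ drops $a_k$ and $R$ drops $a_0$. Both $L$ and $R$ still have their first $k_\ell$ gaps equal to $1$, and --- this is the point --- both have strictly smaller right span: writing $R=(R_0,\ldots,R_k)$ one has $R_{k_\ell}>a_{k_\ell}$, so $w(R)=a_k-R_{k_\ell}<w(C)$, while $w(L)=a_{k-1}-a_{k_\ell}<w(C)$. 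The induction hypothesis therefore applies to \emph{both} windows and gives $\ell(L)=\ell(R)=k_\ell$. In particular $\min\supp F(R)=R_{k_\ell}>a_{k_\ell}$, so $F(R)$ vanishes on a right neighborhood of $a_{k_\ell}$ while $F(L)$ does not; hence near $a_{k_\ell}$ we have $F(C)=c_1F(L)$, and the leftmost coefficient of $F(C)$ equals $c_1$ times the (nonzero) leftmost coefficient of $F(L)$.

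The crux, and the step I expect to be the main obstacle, is thus to exclude $c_1=0$. If $c_1=0$, then $F(C)=c_2F(R)$ with $c_2\neq0$ (recall that, $F$ being not identically zero, no value $F(a_0,\ldots,a_k)$ vanishes identically), whence $\min\supp F(C)=\min\supp F(R)=R_{k_\ell}$. When the first non-unit gap sits exactly at position $k_\ell$ (that is, $a_{k_\ell+1}-a_{k_\ell}\ge2$), I would insert the integer $a_{k_\ell}+1$; then $R_{k_\ell}=a_{k_\ell}+1$ lies strictly between the knots $a_{k_\ell}$ and $a_{k_\ell+1}$ of $C$, so $\min\supp F(C)$ would fail to be one of the points $a_0,\ldots,a_k$. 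This contradicts the form of $F(C)$ given by Proposition~\ref{prop:sol} together with Fact~\ref{fact.support}, which force the endpoints of $\supp F(C)$ to be knots of $C$. Hence $c_1\neq0$ and $\ell(C)=k_\ell$.

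The genuinely delicate configuration is the remaining one, in which there are \emph{additional} unit gaps beyond the first $k_\ell$ (so $a_{k_\ell+1}=a_{k_\ell}+1$ and $j_0>k_\ell$): here $R_{k_\ell}=a_{k_\ell+1}$ is a bona fide knot of $C$, and the non-knot contradiction above does not apply. To force a would-be support endpoint off the integer lattice I would instead subdivide at position $k_\ell$ by a \emph{rational} node $a\in(a_{k_\ell},a_{k_\ell}+1)$, so that now $R_{k_\ell}=a\notin\mathbb Z$ and the same dichotomy contradicts the integrality of the knots of $C$. The price is that this $R$ retains only $k_\ell-1$ unit gaps and no longer falls under the induction hypothesis as stated. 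Closing this gap --- for instance by running the induction over the enlarged class of configurations whose first $k_\ell$ gaps are $1$ up to a single rational subdivision in $(a_{k_\ell},a_{k_\ell}+1)$, still with the right span measured from the point in position $k_\ell$ --- is where I expect the real work to lie.
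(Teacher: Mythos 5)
Your reduction to the inequality $\ell(a_0,\ldots,a_k)\le k_\ell$ via Lemma~\ref{lem:contraction} is fine, and the skeleton of the induction on the right span is sensible, but the proof has a genuine gap --- one you partly acknowledge yourself --- at precisely the step you identify as the crux, namely excluding $c_1=0$. Your non-knot-endpoint argument only covers the configuration in which the first non-unit gap is $(a_{k_\ell},a_{k_\ell+1})$ with $a_{k_\ell+1}-a_{k_\ell}\ge 2$. Whenever there are further unit gaps beyond position $k_\ell$ (so that $R_{k_\ell}=a_{k_\ell+1}=a_{k_\ell}+1$ is itself a knot of $C$), the conclusion $\min\supp F(C)=R_{k_\ell}$ is perfectly compatible with Proposition~\ref{prop:sol} and Fact~\ref{fact.support}, so no contradiction arises; and your proposed repair by a rational subdivision produces a window $R$ that no longer lies in the class covered by your induction hypothesis, as you note. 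A second, unacknowledged gap is the parenthetical claim that no value $F(a_0,\ldots,a_k)$ vanishes identically: this is not available at this stage of the paper (the definition of $\ell$ explicitly allows $\ell=\infty$), it is essentially part of what Lemma~\ref{lem:lreq} asserts, and without it you can neither conclude $c_2\neq 0$ nor rule out $F(C)\equiv 0$ outright in the case $c_1=0$.

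The paper's proof resolves both difficulties with a device your argument lacks: an anchor at the one configuration where non-vanishing near the relevant point is known a priori, namely $F(0,\ldots,k)$, which by the very definition of $k_\ell$ does not vanish identically on any neighbourhood of $k_\ell$. A rescaled copy of $(a_0,\ldots,a_k)$ is embedded into a partition $(c_j)$ of $[0,k]$, padded equidistantly on both sides and positioned so that the $k_\ell$-th knot of the window of interest lands exactly at the point $k_\ell$; an auxiliary induction over the shifted windows $E^m(a_0,\ldots,a_k)$, starting from the equidistant $E^{-k}(a_0,\ldots,a_k)$, shows that the coefficients of all windows to the left of the one of interest vanish, so that on $(-\infty,c_{m+k_\ell+1})$ the span identity collapses to $F(0,\ldots,k)=\mu_m F(c_m,\ldots,c_{m+k})$; the non-vanishing of the left-hand side near $k_\ell$ then forces $\mu_m\neq 0$ and $\ell(c_m,\ldots,c_{m+k})\le k_\ell$ simultaneously, which is exactly the pair of facts your induction step cannot produce. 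You would need some analogue of this anchoring (or a separate argument for the case $k_\ell=0$, which the paper also treats by a direct rescaling into the first cell of $(0,\ldots,k)$) to complete your approach.
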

\begin{proof}
	We only prove the assertion for $\ell$, the proof for $r$ is similar.
	Let $a_0 < \cdots < a_k$ be a sequence of integers with the above
	properties. 

	First, we treat the case $k_\ell=0$.
	Here, we let $0 =\alpha_0 < \cdots <\alpha_k=1$ be a rescaling of
	$(a_0,\ldots,a_k)$ and by the span and the support condition of $F$,
	\[
		F(0,\ldots,k) = \mu F(\alpha_0,\ldots,\alpha_k)\qquad\text{on
		$(-\infty,\alpha_1)$}.
	\]
	Since $F(0,\ldots,k)$ does not vanish identically on any neighborhood of
	zero,
	the same is true for the right
	hand side and in particular $\ell(a_0,\ldots,a_k) = 0$.

	Now we assume that $k_\ell \geq 1$.
	For $(b_0,\ldots,b_k)\in P_k$ so that $b_i$ is an integer for every 
	$i=0,\ldots,k$, we define the equidistant shifts
	$E$ to the left and $E^{-1}$ to the right:
	\[
		E(b_0,\ldots,b_k) := (b_1,\ldots,b_k,b_k+1),\qquad
		E^{-1}(b_0,\ldots,b_k):=(b_0 - 1,b_0,\ldots,b_{k-1}).
	\]
	Since $E^{-k}(b_0,\ldots,b_k)$ is equidistant by definition we know that
	$\ell(E^{-k}(b_0,\ldots,b_k))=k_\ell$. We will now show by induction on $i$
	that $\ell(E^i(a_0,\ldots,a_k)) = k_\ell$ for the given sequence of
	points $(a_0,\ldots,a_k)$ and all integers $i$ in the range $-k\leq i\leq  0$. Therefore,
	assume that $\ell(E^i(a_0,\ldots,a_k)) = k_\ell$ for $i < m\leq 0$ and
	we will now show this for $i=m$.

	To this end, 
	generate a special partition on the interval $[0,k]$. Let
	$(c_j)_{j=-M}^N$ be such that, with a sufficiently large integer $q$,
	\begin{enumerate}
		\item $0=c_{-M} < \cdots < c_{-1} < c_0$ is equidistant so that
			the subsequent points have distance $1/q$ to each other,
		\item $c_0 < \cdots < c_k$ is a translated rescaling of the points
			$(a_0,\ldots,a_k)$ with factor $1/q$,
		\item $c_k < \cdots < c_N=k$ is equidistant so that the
			subsequent points have distance $1/q$ to each other,
		\item \label{it:eq} $k_\ell = c_{m+k_\ell}$.
	\end{enumerate}
	Note that, for all integers $s$ in the range $-M
	\leq s \leq N-k$,  
	$(c_s,\ldots,c_{s+k})$ is a rescaling of
	$E^s(a_0,\ldots,a_k)$. Thus, by dilation and translation invariance of
	$F$ we get 
	$\ell(E^s(a_0,\ldots,a_k)) = \ell(c_s,\ldots,c_{s+k})$ for all such
	integers $s$.
	Therefore, by Lemma \ref{lem:contraction} and the induction hypothesis we know that
	\begin{equation}\label{eq:facts1}\left\{
		\begin{aligned}
			\ell(c_i,\ldots,c_{i+k}) & = k_\ell\qquad &\text{if 
			$i<m$}, \\
			\ell(c_i,\ldots,c_{i+k}) &\geq k_\ell\qquad&\text{ if
			$m\leq i\leq 1$}, \\
			\ell(c_i,\ldots,c_{i+k}) &\geq k_\ell-(i-1) \qquad&\text{
			if $i>1$.}
		\end{aligned}
		\right.
	\end{equation}
	
	By the span condition of $F$,
	\begin{equation}\label{eq:spancontract}
		F(0,\ldots,k) = \sum_{j=-M}^{N-k} \mu_j F(c_j,\ldots,c_{j+k})
	\end{equation}
	for some coefficients $(\mu_j)$.
	We now show by induction that the coefficient $\mu_j$
	 vanishes for $j< m$.
	 First we prove that $\mu_{-M}=0$ provided that $-M < m$.
	The left hand side of equation \eqref{eq:spancontract} is
	identically zero on $I=(-\infty,c_{-M+k_\ell+1})\subseteq (-\infty,
	c_{m+k_\ell}) = (-\infty, k_\ell)$ and also by
	\eqref{eq:facts1}, 
	$F(c_j,\ldots,c_{j+k})$ is identically zero on $I$ for
	$j>-M$. But
	$F(c_{-M},\ldots,c_{-M+k})$ is not identically zero on $I$  since
	$\ell(c_{-M},\ldots,c_{-M+k})=k_\ell$.
	This implies $\mu_{-M}=0$.

	As long as
	$c_{j+k_\ell+1} \leq c_{m+k_\ell}$, we can continue the same argument by
	induction on $j$ to deduce $\mu_j=0$ for $j<m$ and therefore, equation
	\eqref{eq:spancontract} becomes
	\[
		F(0,\ldots,k) = \sum_{j=m}^{N-k} \mu_j F(c_j,\ldots,c_{j+k}),
	\]
	which we now evaluate on $(-\infty, c_{m+k_\ell+1})$. 
	The facts \eqref{eq:facts1} and $m\leq 0$ now imply
	that $F(c_{m+s},\ldots,c_{m+s+k})$ vanishes identically on $(-\infty,c_{m+k_\ell+1})$ for
	all positive integers $s$ and therefore,
	\[
		F(0,\ldots,k) = \mu_m F(c_m,\ldots,c_{m+k})\qquad\text{on
		}(-\infty,c_{m+k_\ell+1}).
	\]
	By \eqref{it:eq}, $F(0,\ldots,k)$ does not
	vanish identically on $(-\infty,c_{m+k_\ell+1})$. This means in
	particular that $F(c_m,\ldots,c_{m+k})$ does not vanish identically on
	$(-\infty,c_{m+k_\ell+1})$, which implies that $\ell\big( E^m (
	a_0,\ldots,a_k ) \big) =\ell( c_m,\ldots,c_{m+k})  \leq k_\ell$.
	In combination with the inequality $\ell\big(
	E^{m}(a_0,\ldots,a_k)\big) = \ell(c_m,\ldots,c_{m+k}) \geq k_\ell$ from \eqref{eq:facts1}, 
	this shows the asserted
	equation $\ell(E^{m}(a_0,\ldots,a_k))  = k_\ell$, which completes the
	induction on $m$ and therefore, the proof of the lemma is finished as
	well.
\end{proof}

Recall the definition of $P_{k,e}$ in equation \eqref{eq:defPe} which consists of all rescalings
of integer tuples $(a_0,\ldots,a_k)$ so that $a_{j}-a_{j-1} = 1$ for every 
$j=1,\ldots,k_\ell$ and $j=k_{r}+1,\ldots,k$. Thus, in particular, the above
lemma implies the following corollary.
\begin{cor}\label{cor:Pe_ks}
	Suppose that $F$ satisfies conditions (i)--(iv).

	If $(a_0,\ldots,a_k)\in P_{k,e}$, we have $\ell(a_0,\ldots,a_k) = k_\ell$
	and $r(a_0,\ldots,a_k) = k_r$.
\end{cor}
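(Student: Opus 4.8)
The plan is to derive the corollary directly from Lemma~\ref{lem:lreq}, using condition (iv) to transport the conclusion from integer tuples to their rational rescalings. The decisive observation is that the functionals $\ell$ and $r$ are invariant under dilation by positive rationals. Indeed, fix $(a_0,\ldots,a_k)\in P_k$ with $F(a_0,\ldots,a_k)\not\equiv 0$ and a positive $\delta\in\mathbb Q$. Condition (iv) gives a scalar $c$ with $F(\delta a_0,\ldots,\delta a_k) = c\,F(a_0,\ldots,a_k;\cdot/\delta)$. Applying (iv) once more, now with dilation factor $1/\delta$ to the tuple $(\delta a_0,\ldots,\delta a_k)$, yields $F(a_0,\ldots,a_k) = c'\,F(\delta a_0,\ldots,\delta a_k;\delta\cdot)$ for some scalar $c'$; were $F(\delta a_0,\ldots,\delta a_k)\equiv 0$, this would force $F(a_0,\ldots,a_k)\equiv 0$, a contradiction. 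Hence $c\neq 0$, so $\supp F(\delta a_0,\ldots,\delta a_k) = \delta\,\supp F(a_0,\ldots,a_k)$. Since the $i$th entry of the dilated tuple is $\delta a_i$, we conclude $\ell(\delta a_0,\ldots,\delta a_k) = \ell(a_0,\ldots,a_k)$ and $r(\delta a_0,\ldots,\delta a_k) = r(a_0,\ldots,a_k)$.

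With this invariance in hand, the proof reduces to the case of integer tuples. Given $(a_0,\ldots,a_k)\in P_{k,e}$, the definition \eqref{eq:defPe} provides $\delta\in\mathbb Q$ (necessarily $\delta>0$, since the result must lie in $P_k$) and $(b_0,\ldots,b_k)\in Z$ with $(a_0,\ldots,a_k) = \delta(b_0,\ldots,b_k)$. By the invariance just established, it suffices to show $\ell(b_0,\ldots,b_k)=k_\ell$ and $r(b_0,\ldots,b_k)=k_r$. Now $(b_0,\ldots,b_k)\in Z$ is an increasing integer tuple with $b_j-b_{j-1}=1$ for $j=1,\ldots,k_\ell$ and $b_j-b_{j-1}=1$ for $j=k_r+1,\ldots,k$, so it simultaneously meets the hypotheses of both halves of Lemma~\ref{lem:lreq}: for the first half take $a_{k_\ell},\ldots,a_k := b_{k_\ell},\ldots,b_k$, whereupon the prepended equidistant points are forced to be $b_0,\ldots,b_{k_\ell-1}$, giving $\ell(b_0,\ldots,b_k)=k_\ell$; symmetrically, the second half applies with $a_0',\ldots,a_{k_r}' := b_0,\ldots,b_{k_r}$, giving $r(b_0,\ldots,b_k)=k_r$.

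There is no substantive obstacle here: the corollary is essentially the statement that the two separate conclusions of Lemma~\ref{lem:lreq} hold together on the common subclass $Z$, extended to $P_{k,e}$ by scaling. The only point demanding a line of care is the non-vanishing $c\neq 0$ in the dilation step, which I handle above by running condition (iv) in both directions; the finiteness $\ell(b_0,\ldots,b_k)=k_\ell<\infty$ coming from Lemma~\ref{lem:lreq} guarantees $F(b_0,\ldots,b_k)\not\equiv 0$ and thereby also $F(a_0,\ldots,a_k)\not\equiv 0$, so that $\ell$ and $r$ are genuinely defined at $(a_0,\ldots,a_k)$ as required.
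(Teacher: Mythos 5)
Your proof is correct and follows the same route as the paper, which presents the corollary as an immediate consequence of Lemma~\ref{lem:lreq} applied to the integer tuples in $Z$ (which satisfy the hypotheses of both halves of that lemma simultaneously), extended to all of $P_{k,e}$ by dilation. Your explicit verification that the dilation scalar is non-zero, and hence that $\ell$ and $r$ are dilation-invariant, is a detail the paper leaves implicit but uses throughout.
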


\begin{lem}\label{lem:0}
	Suppose that $F$ satisfies conditions (i)-(iv) and suppose that for some
	non-negative integer $n$ and all 
	$(a_0,\ldots,a_k)\in P_k$, we have $F(a_0,\ldots,a_k) = \sum_{i=0}^k
	f_i D^n \delta_{a_i}$ for some coefficients $(f_i)$ depending
	on $(a_0,\ldots,a_k)$.

	Then, for all $(a_0,\ldots,a_k)\in P_{k,e}$ those coefficients $(f_i)$
	satisfy 
	\begin{align*}
		f_i \neq 0 \qquad \text{if $i\in\{k_\ell,\ldots,k_r\}$},
		\\
		f_i = 0 \qquad \text{if $i\notin\{k_\ell,\ldots,k_r\}$}.
	\end{align*}
\end{lem}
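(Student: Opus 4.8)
The plan is to split the statement into an immediate consequence of Corollary~\ref{cor:Pe_ks} and a genuinely harder interior assertion, and to attack the latter with the span condition applied to a carefully chosen refinement.

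\emph{The outer coefficients and the two extreme ones.} Because the knots $a_0<\cdots<a_k$ are distinct and $\supp D^n\delta_{a_i}=\{a_i\}$, the point supports do not interact, so $\supp F(a_0,\ldots,a_k)=\{a_i:f_i\neq0\}$. For $(a_0,\ldots,a_k)\in P_{k,e}$ Corollary~\ref{cor:Pe_ks} gives $\min\supp F(a_0,\ldots,a_k)=a_{k_\ell}$ and $\max\supp F(a_0,\ldots,a_k)=a_{k_r}$. Since the $a_i$ are increasing, reading off these two equalities yields at once $f_i=0$ for every $i\notin\{k_\ell,\ldots,k_r\}$, together with $f_{k_\ell}\neq0$ and $f_{k_r}\neq0$. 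Thus everything is proved except $f_i\neq0$ for $k_\ell<i<k_r$.

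\emph{The interior coefficients (the crux).} Equivalently I must show that the set $\{i:f_i\neq0\}$ has no gaps, i.e. equals $\{k_\ell,\ldots,k_r\}$. It is essential to use the mapping on all of $P_k$ and not just its equidistant values: Proposition~\ref{prop:polyequi} alone does not forbid a gap, since the symbol $p(w)=h_0(1-w^2)$ (for which $h_1=0$) satisfies $p(w)\mid p(w^r)$ for all $r$; the rigidity must come from the full structure. It is convenient to reduce first, by passing to the core tuple $(a_{k_\ell},\ldots,a_{k_r})$ and unit-extending the head and tail, to the case $k_\ell=0$, $k_r=k$ (so that $P_{k,e}=P_k$); I expect this core map to inherit (i)--(iv), although verifying the span condition for it is itself a point that must be checked. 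The mechanism to rule out gaps is then as follows. Assuming $F(a_0,\ldots,a_k)$ has an interior gap, insert a rational point $a$ in an interval adjacent to the gap and use the span condition to write $F(a_0,\ldots,a_k)=c_1F(W_1)+c_2F(W_2)$, where $W_1$ drops the largest knot $a_k$ and $W_2$ drops the smallest knot $a_0$. Both $F(W_j)$ are again of Dirac form, and the extreme knots of their supports are located by Lemmas~\ref{lem:contraction} and~\ref{lem:lreq}. In the reduced case the mass $D^n\delta_{a_k}$ is lost from $W_1$ and can only come from $W_2$, forcing $c_2\neq0$, and symmetrically $c_1\neq0$. The decisive equation is the coefficient of $D^n\delta_a$: it is $0$ on the left because $a$ is not an original knot, while on the right $a$ is the extreme, hence support-carrying, knot of exactly one window. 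If the other window contributes no $D^n\delta_a$-mass, this equation forces one $c_j$ to vanish, whence that window can no longer reproduce the extreme mass it alone carries, contradicting $f_{k_\ell}\neq0$ or $f_{k_r}\neq0$. The clean model is $k'=2$, $k_\ell=0$, $k_r=2$: from the hypothetical $F(0,1,2)=D^n\delta_0-D^n\delta_2$ one inserts $\tfrac12$; since $(0,\tfrac12,1)$ is a dilate of $(0,1,2)$ it again has no middle mass, so the $D^n\delta_{1/2}$-equation gives $c_2=0$, after which $D^n\delta_2$ cannot be produced.

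\emph{The general case and the main obstacle.} To run this for an arbitrary gap I would induct, for instance selecting $i_0$ as the leftmost index in $\{k_\ell+1,\ldots,k_r-1\}$ with $f_{i_0}=0$, so that $f_{k_\ell},\ldots,f_{i_0-1}$ are already known nonzero, and inserting just to the left of $a_{i_0}$. The genuine difficulty, which is what makes this the hard step, is twofold. First, after a single insertion the windows $W_1,W_2$ typically leave $P_{k,e}$ — their heads or tails are no longer unit-spaced across the core boundary — so Corollary~\ref{cor:Pe_ks} is unavailable and one must fall back on the one-sided estimates of Lemmas~\ref{lem:contraction} and~\ref{lem:lreq} to decide which knot carries the extreme support of each window. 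Second, and more seriously, the self-similarity exploited in the model (that a window of $(0,\tfrac12,1)$ is a dilate of $(0,1,2)$) is a low-dimensional accident: for $k\geq3$ the requirement that a $k$-point window be an affine image of the original $(k{+}1)$-point tuple is overdetermined, so one cannot in general arrange a window to inherit the gap. The realistic route is therefore to replace the single insertion by a dense, non-equidistant refinement, as in the proof of Proposition~\ref{prop:sol}, writing $F(a_0,\ldots,a_k)=\sum_s\mu_sF(W_s)$, and then to run a left-to-right sweep in the spirit of Lemma~\ref{lem:lreq}: the refinement is designed so that each core knot $a_i$ is the extreme support knot of a uniquely far-reaching active window, which precludes cancellation at $a_i$ and forces $f_i\neq0$ one knot at a time. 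Controlling these cancellations — guaranteeing that no two active windows contribute competing $D^n\delta_{a_i}$-masses — is the technical heart of the argument.
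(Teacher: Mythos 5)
Your first step is correct and coincides with the paper's: since $F(a_0,\ldots,a_k)=\sum_i f_i D^n\delta_{a_i}$ with distinct knots, the support is exactly $\{a_i: f_i\neq 0\}$, and Corollary~\ref{cor:Pe_ks} immediately gives $f_i=0$ for $i\notin\{k_\ell,\ldots,k_r\}$ together with $f_{k_\ell}\neq 0$ and $f_{k_r}\neq 0$. The problem is everything after that. For the interior coefficients you do not actually give a proof: your single-insertion argument is only carried out in the model case $k'=2$, where it relies on the accident that a window of $(0,\tfrac12,1)$ is a dilate of $(0,1,2)$, and you yourself observe that this self-similarity fails for $k\geq 3$. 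The fallback you then sketch (a dense non-equidistant refinement plus a left-to-right sweep in which each $a_i$ is the extreme support knot of a ``uniquely far-reaching active window'') is precisely the part you label ``the technical heart'' and leave unexecuted; as stated it is not clear it can be made to work, because nothing in Lemmas~\ref{lem:contraction} and~\ref{lem:lreq} prevents two active windows from both depositing $D^n\delta_{a_i}$-mass with cancelling coefficients. The preliminary reduction to $k_\ell=0$, $k_r=k$ via a ``core map'' is also unjustified: the value $F(a_0,\ldots,a_k)$ is not known at this stage to depend only on $(a_{k_\ell},\ldots,a_{k_r})$, and the span condition for such a core map does not obviously follow from (ii).

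For comparison, the paper's mechanism is different and avoids these issues. Assuming $f_j=0$ for some interior $j$, it applies the span condition \emph{backwards}: it views $a_j$ as the point inserted into the $(k+1)$-tuple $(a_0,\ldots,a_{j-1},a_{j+1},\ldots,a_k,a_k+1)$, so that this tuple lies in $\lin\{F(a_0,\ldots,a_k),\,F(a_1,\ldots,a_k,a_k+1)\}$. Evaluating near $a_j$ (where both the left-hand side and $F(a_0,\ldots,a_k)$ vanish) and ruling out the degenerate case via Lemma~\ref{lem:lreq} forces the shifted tuple $E(a_0,\ldots,a_k)$ to vanish near $a_j$ as well; iterating in both directions shows $F(E^i(a_0,\ldots,a_k))$ vanishes near $a_j$ for every integer $i$. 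A refinement of $(0,\ldots,k)$ built from rescaled copies of the $E^i(a_0,\ldots,a_k)$, arranged so that the image of $a_j$ lands at $k_\ell$, then contradicts the definition of $k_\ell$. If you want to salvage your write-up, this propagation-of-the-gap argument is the missing idea; your current text identifies the obstacle but does not overcome it.
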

\begin{proof}
	Let $(a_0,\ldots,a_k)\in P_{k,e}$ be a sequence of integers.
	By Corollary~\ref{cor:Pe_ks}, we know that $f_i=0$ if $i\notin
	\{k_\ell,\ldots,k_r\}$ and $f_{k_\ell}\neq 0$ and $f_{k_r}\neq 0$.
	We argue by contradiction and assume that for some sequence of
	coefficients $(a_0,\ldots,a_k)\in P_{k,e}$
with $F(a_0,\ldots,a_k) = \sum_{i=0}^k f_i D^n\delta_{a_i}$,
we have $f_j = 0$ for some $j\in \{k_\ell+1,\ldots,k_r-1\}$.
	We now show that for all integers $i$, $F(E^i (a_0,\ldots,a_k))$ vanishes
	identically
	on a neighborhood of $a_j$, where we again use the equidistant shifts
	\[
		E(a_0,\ldots,a_k) := (a_1,\ldots,a_k,a_k+1),\qquad
		E^{-1}(a_0,\ldots,a_k):=(a_0 - 1,a_0,\ldots,a_{k-1}).
	\]
	Choose $a_{k+1} = a_k + 1$; by the span condition, there
	exist coefficients $\mu_1,\mu_2$ with
	\[
		F(a_0,\ldots, a_{j-1},a_{j+1},\ldots, a_{k+1}) = \mu_1
		F(a_0,\ldots,a_{k}) + \mu_2 F(a_1,a_2,\ldots,a_{k+1}).
	\]
	Evaluating this equation on a sufficiently small neighbourhood $U$ of
	$a_j$, by the form of $F$, the left hand
	side is identically zero on $U$ and by assumption
	$F(a_0,a_1,\ldots,a_k)$ is as well. This
	implies either $\mu_2=0$ or $F(a_1,\ldots,a_{k+1})\equiv 0$ on $U$.
	The case $\mu_2=0$ means
	\[
		F(a_0,\ldots, a_{j-1},a_{j+1},\ldots, a_{k+1}) = \mu_1
		F(a_0,a_1,\ldots,a_k),
	\]
	which is not possible since by Lemma \ref{lem:lreq}, the left hand side
	is not identically zero on each
	neighborhood of $a_{k_r+1}$
	but the right hand side is, provided this neighborhood is sufficiently
	small.
	Therefore, we are in the case that $F(a_{1},\ldots,a_{k+1}) =
	F(E(a_0,\ldots,a_k))=0$
	on $U$.
	Applying this argument by induction yields that
	$F(E^i(a_0,\ldots,a_k))=0$ on a neighborhood of $a_j$ for all positive
	integers $i$. 
	A symmetric argument yields the same assertion for negative integers $i$
	as well and in summary, for each integer $i$, $F(E^i(a_0,\ldots,a_k))=0$
	on a neighborhood of $a_j$.

	This yields a contradiction as follows. Consider for instance the
	following
	partition on the interval $[0,k]$. Let
	$(c_i)_{i=-M}^N$ be such that, with a sufficiently large integer $q$,
	\begin{enumerate}
		\item $0=c_{-M} < \cdots < c_{-1} < c_0$ is equidistant so that
			the subsequent points have distance $1/q$ to each other,
		\item $c_0 < \cdots < c_k$ is a translated rescaling of the points
			$(a_0,\ldots,a_k)$ with factor $1/q$,
		\item $c_k < \cdots < c_N=k$ is equidistant so that the
			subsequent points have distance $1/q$ to each other,
		\item $k_\ell = c_{j}$.
	\end{enumerate}
	The span condition of $F$ implies
	\[
		F(0,\ldots,k) = \sum_{i=-M}^{N-k} \mu_i F(c_i,\ldots,c_{i+k}).
	\]
	We evaluate this at $k_\ell=c_j$ and observe that by definition of
	$k_\ell$ and the
	form of $F$, $F(0,\ldots,k)$ is not identically zero on any neighborhood of
	$k_\ell$, whereas $(c_i,\ldots,c_{i+k})$ is a rescaling of
	$E^i(a_0,\ldots,a_k)$ and by dilation and translation invariance of $F$
	we therefore know that $F(c_i,\ldots,c_{i+k})$ vanishes identically in a
	neighborhood of $c_j=k_\ell$ for all integers~$i$. This is the desired
	contradiction.
\end{proof}

With the help of the results in this section, we are now able to
prove the exact form of the coefficients $(h_i)$ appearing in 
Proposition~\ref{prop:solequi} if $F$, for equidistant partitions, is of the form 
$ F(-k_\ell,\ldots,k-k_\ell) = \sum_{i=0}^{k'} h_i D^n \delta_i$.
\begin{thm}\label{thm:endform}
	Suppose that $F$ satisfies conditions (i)-(iv) and suppose that for some
	non-negative integer $n$ and all 
	$(a_0,\ldots,a_k)\in P_k$, we have $F(a_0,\ldots,a_k) = \sum_{i=0}^k
	f_{a_i} D^n \delta_{a_i}$ for some coefficients $(f_{a_i})$.

	Then,
	\[
F(-k_\ell,\ldots, k-k_\ell)=
\lambda\sum_{i=0}^{k'} (-1)^i {k'\choose i} D^n\delta_i
	\]
	for some nonzero coefficient $\lambda$.
\end{thm}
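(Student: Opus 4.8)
By Lemma~\ref{lem:0} applied to the equidistant tuple $(-k_\ell,\ldots,k-k_\ell)\in P_{k,e}$, every coefficient $h_i$ of $\Psi:=F(-k_\ell,\ldots,k-k_\ell)=\sum_{i=0}^{k'}h_iD^n\delta_i$ is nonzero, so $p(w)=\sum_{i=0}^{k'}h_iw^i$ has degree exactly $k'$. Since $h_i=\lambda(-1)^i\binom{k'}{i}$ for all $i$ is equivalent to $p(w)=\lambda(1-w)^{k'}$, and $\deg p=k'$, the claim is equivalent to showing that every root of $p$ equals $1$, i.e.\ that the multiplicity $\sigma$ of $w=1$ is $k'$. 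By Proposition~\ref{prop:polyequi} all roots of $p$ are roots of unity occurring in complete sets of $j$-th roots of unity; as $k'\ge1$ this already forces $1$ to be a root, so $1\le\sigma\le k'$, and it remains to prove $\sigma=k'$. I stress at the outset that Proposition~\ref{prop:polyequi} together with Lemma~\ref{lem:0} does \emph{not} suffice: for $k'=3$ the symbol $p(w)=(1-w)^2(1+w)$ has all coefficients nonzero and obeys the conclusion of Proposition~\ref{prop:polyequi}, yet is not of the required form. Thus the non-equidistant span condition must genuinely be used.

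The plan is to strip the full power of $(1-w)$ from $p$ by passing to an antiderivative and landing in the regular (B-spline) case. Let $m_0=n+\sigma$ be the exact order of the moment conditions satisfied by $\Psi$ (here $\sigma$ is the measure-moment order of $(h_i)$, which equals the multiplicity of $w=1$ in $p$), and set $\psi:=D^{-m_0}\Psi$ via Fact~\ref{fact.2}, so that $\hat\psi(0)\neq0$. I would first verify that $\tilde F:=D^{-m_0}F$ is again a solution of (i)--(iv): the order of the satisfied moment conditions is invariant under translation and dilation and, by the span/refinement argument of Proposition~\ref{prop:sol}, propagates from the equidistant tuple to all of $P_k$, so $D^{-m_0}F(a_0,\ldots,a_k)$ is defined everywhere; linearity of $D^{-m_0}$ yields (ii), the identity $D^{-m}(\phi(\lambda\cdot-\eta))=\lambda^{-m}(D^{-m}\phi)(\lambda\cdot-\eta)$ from the proof of Proposition~\ref{prop:deriv} yields (iii),(iv), and the vanishing moments yield the support condition (i). Since $\hat\psi(0)\neq0$, Corollary~\ref{cor:refin} (via Theorem~\ref{th.rep.1}) together with a degree count gives $\psi=\sum_{j=0}^{k'-\sigma}\beta_jN(j,\ldots,j+\sigma)$ with $\beta_0,\beta_{k'-\sigma}\neq0$, and Fact~\ref{fact.support} pins its support to $[0,k']$. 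Applying $D^\sigma$ and using the cardinal B-spline identity $D^\sigma N(j,\ldots,j+\sigma)=c\sum_{l=0}^\sigma(-1)^l\binom{\sigma}{l}\delta_{j+l}$ (on the Fourier side $(i\xi)^\sigma\widehat N=(1-e^{-i\xi})^\sigma$) recovers the factorisation $p(w)=c\,b(w)(1-w)^\sigma$ with $b(w)=\sum_j\beta_jw^j$, $b(0)\neq0$ and $b(1)\neq0$ because $\hat\psi(0)\neq0$.

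It now suffices to prove $\deg b=0$, i.e.\ $\sigma=k'$, and this is the main obstacle. As $b(1)\neq0$, every root of $b$ is a root of unity of order $\ge2$, and all of these must be excluded. This cannot be achieved with the equidistant symbol alone: inserting the factorisation into $p(w^r)=p(w)Q_r(w)$ from \eqref{eq:pG} only yields $b(w)\mid b(w^r)(1+w+\cdots+w^{r-1})^\sigma$, a relation that a nonconstant $b$ of the above type can satisfy. The decisive input is the non-equidistant span condition for the \emph{regular} solution $\tilde F$, for which Proposition~\ref{prop:sol}(1) gives $\tilde F(a_0,\ldots,a_k)=\sum_j\alpha_jN(a_j,\ldots,a_{j+\sigma})$ on all of $P_k$, while Lemma~\ref{lem:lreq} and Corollary~\ref{cor:Pe_ks} fix $\ell=k_\ell$ and $r=k_r$ on $P_{k,e}$. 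The finish I envisage is to insert a single non-lattice knot and exploit that condition (ii) confines $\tilde F(a_0,\ldots,a_k)$ to the two-dimensional span of the two refined B-spline combinations: a genuinely multi-term combination ($\deg b\ge1$) refines into more than two independent cardinal B-splines and cannot lie in such a two-dimensional space while respecting the exact positions of $\min\supp$ and $\max\supp$ from Lemma~\ref{lem:lreq}. This forces $b$ to be a monomial, whence $\sigma=k'$ and $p(w)=\lambda(1-w)^{k'}$, i.e.\ $h_i=\lambda(-1)^i\binom{k'}{i}$. I expect this last collapse---the B-spline analogue of the present statement---to be the hard part, as it is precisely where the full non-equidistant refinement structure is indispensable.
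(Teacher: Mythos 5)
Your reduction is sensible and your diagnosis is exactly right: Proposition~\ref{prop:polyequi} and Lemma~\ref{lem:0} alone cannot finish the job (your example $p(w)=(1-w)^2(1+w)$ makes the point well), so the non-equidistant span condition must carry the weight. The passage to $\psi=D^{-m_0}\Psi$ and the identification $\psi=\sum_j\beta_jN(j,\ldots,j+\sigma)$ are also consistent with the paper's machinery. But the proof has a genuine gap precisely where you say you expect the hard part to be: the claim that a non-monomial $b$ is incompatible with condition (ii) is only ``envisaged'', and the sketch you give for it does not work as stated. Inserting one knot confines $\tilde F(a_0,\ldots,a_k)$ to the span of $\tilde F(\alpha_0,\ldots,\alpha_k)$ and $\tilde F(\alpha_1,\ldots,\alpha_{k+1})$, but these two distributions are themselves unknown multi-term B-spline combinations (Proposition~\ref{prop:sol}(1) permits arbitrary coefficients depending on the tuple), so there is no two-dimensional space of single B-splines to compare against and no immediate dimension count. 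Moreover, Example~\ref{ex:counter} shows that for general tuples non-B-spline solutions genuinely exist, so any such argument must exploit the particular knot sequences chosen --- which is exactly the content you have not supplied. In effect you have reduced the theorem to its own B-spline analogue and stopped there.

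For comparison, the paper attacks the delta case directly and resolves the core difficulty with two concrete devices. First (Part~II of its proof) it shows by induction that every matrix $(h_{s_i-j})_{i,j=1}^m$ with $1\le s_1<\cdots<s_m\le k'+m-1$ is invertible, using Lemma~\ref{lem:0} (all $f_{n_i}\neq0$) applied to a knot sequence built from the $s_i$; this pins down the refinement coefficients $\lambda_j$ in \eqref{eq:delta2} uniquely. Second (Part~III), letting $r$ be the largest integer with $(1-w^r)\mid p(w)$, it takes the special sequence $n_j=j\cdot r!$ (padded arbitrarily at the right end) and shows that the uniquely determined $\lambda_j$ must coincide with the coefficients of the polynomial $(1-w^{r!})^{\nu}/p(w)$; if $r\ge2$ this forces some $f_{n_j}=0$, contradicting Lemma~\ref{lem:0}, whence $r=1$ and Proposition~\ref{prop:polyequi} yields $p(w)=h_0(1-w)^{k'}$. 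Some such quantitative use of specially constructed non-equidistant tuples is what your argument is missing; without it the proposal is an outline of a strategy rather than a proof.
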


\begin{proof}
	\textsc{Part I: Preparation.}
	Let $0=n_0 < \cdots < n_{k'}$ be an arbitrary increasing sequence of
	integers and let $ n_{j} = j$ for $j<0$ and $n_j = n_{k'} + (j-k')$ for
	$j>k'$ be an equidistant extension.
	Then, we abbreviate, as in the equidistant case,
\begin{equation}\label{eq:delta1}
	G(n_0,\ldots,n_{k'}) :=
	F(n_{-k_\ell},\ldots,n_{k-k_\ell}) = \sum_{i=0}^{k'} f_{n_i} D^n
	\delta_{n_i},
\end{equation}
where the latter form follows from Lemma \ref{lem:0}. 
Lemma \ref{lem:lreq} implies that $G$ satisfies the span condition in the form
\begin{equation}\label{eq:delta2}
	G(n_0,\ldots,n_{k'}) = \sum_{j=0}^{n_{k'} - k'} \lambda_j G(j,\ldots,j+k')
\end{equation}
for some coefficients $(\lambda_j)$. 
Recall that the definition of $k_\ell$ and $k_r$ was such that $h_{0}\neq
0$ and $h_{k'}\neq 0$ and also, by Lemma \ref{lem:0}, $f_{n_0}\neq 0$ and
$f_{n_{k'}}\neq 0$. Without loss of generality, we assume the normalization $h_0
= f_{n_0} = 1$, which yields from \eqref{eq:delta2} that $\lambda_0 = 1$.

Equating \eqref{eq:delta1} and \eqref{eq:delta2} and applying the function
$t\mapsto e^{tz}$ with
$z\neq 0$ yields
 with the setting $w=e^z$ and $h_j = 0$
for $j\notin \{ 0,\ldots, k'\}$
\begin{equation}\label{eq:conv}
	\sum_{i=0}^{k'} f_{n_i} w^{n_i} = \sum_{j=0}^{n_{k'} - k'}
	\lambda_j w^j \sum_{i=0}^{k'} h_i w^i = \sum_{s=0}^{n_{k'}}
	w^s\sum_{j=0}^{n_{k'} - k'} \lambda_j h_{s-j}.
\end{equation}

\textsc{Part II.} Next, we show that for all choices of positive integers $m$
and all choices of
integers $s_1 < \cdots < s_m$ in the range $\{ 1,\ldots, k' + m-1\}$, the matrix
$(h_{s_i - j})_{i,j=1}^m$ is invertible.
Assume that this is not the case and let $m\geq 1$ be the smallest integer such
that there exist integers $s_1 < \cdots < s_m$ in the range $\{ 1,\ldots,k'+m-1
\}$ so that the matrix $(h_{s_i - j})_{i,j=1}^m$ is not invertible.
We already know that $m\geq 2$  since  by Lemma \ref{lem:0} we have $h_i\neq
0$ for all $i=0,\ldots,k'$.
Based on the sequence of points $s_1 < \cdots < s_m$, we define the following
special sequence $n_0 < \cdots < n_{k'}$:
Let $n_0=0$, $n_{k'} = k'+m-1$ and 
choose the sequence $n_1 <
\cdots < n_{k'-1}$ so that 
\begin{equation}\label{eq:def_s}
	\{0,1,\ldots, n_{k'}-1, n_{k'}\} \setminus \{ s_1,\ldots, s_{m-1} \} = \{
		n_0,n_1,\ldots,n_{k'-1},n_{k'}\}.
\end{equation}
By the minimality of $m$, the matrix $(h_{{s_i}-j})_{i,j=1}^{m-1}$ is
invertible. Therefore, by \eqref{eq:conv}, the coefficients
$\lambda_1,\ldots,\lambda_{m-1}$ are given uniquely by the equations (recall
that due to normalization, $\lambda_0 = 1$)
\[
	0 = \sum_{j=0}^{m-1} \lambda_j h_{s_i-j}= h_{s_i} +
	\sum_{j=1}^{m-1} \lambda_j h_{s_i-j},\qquad i=1,\ldots,m-1.
\]
Moreover, by assumption, the rows $v_1,\ldots,v_m$ of $(h_{s_i -
j})_{i,j=1}^{m}$ are linearily
dependent, i.e. there exist
coefficients $\mu_1,\ldots,\mu_m$, not all vanishing, with
$\sum_{i=1}^m \mu_i v_i = 0$. We know that $\mu_m\neq 0$ since otherwise
$\sum_{i=1}^{m-1} \mu_i v_i=0$ and thus in particular,  denoting $v_i'$ to be
the vector $v_i$ with the last entry deleted, $\sum_{i=1}^{m-1} \mu_i v_{i}' =
0$ which is not possible by the invertibility of the matrix $(v_i')_{i=1}^{m-1}
= (h_{{s_i}-j})_{i,j=1}^{m-1}$. Therefore, we have $v_m = \sum_{i=1}^{m-1}
\tau_i v_i$ for some coefficients $(\tau_i)$. But, using also \eqref{eq:conv},
this implies
\[
	f_{s_m} = \sum_{j=0}^{m-1} \lambda_{j} h_{s_m -j} = \sum_{i=1}^{m-1}
	\tau_i\sum_{j=0}^{m-1} \lambda_j h_{s_i - j} = 0 
\]
and since by \eqref{eq:def_s} 
we have $s_m= n_i$
for some $i = 0,\ldots,k'$, this implies $f_{n_i}= 0$.
But this is not possible by Lemma \ref{lem:0}. 
Therefore, the proof of the invertibility of the matrix $(h_{s_i -
j})_{i,j=1}^m$ for all positive integers $m$ and all choices of integers $s_1 < \cdots < s_m$ in the range
$\{1,\ldots, k' + m -1\}$ is finished.

\textsc{Part III. } Set $p(w) = \sum_{i=0}^{k'} h_i w^i$.
Let $r$ be the largest positive integer so that $p(w) = (1-w^r) v(w)$ for some
polynomial $v$. Since the degree of $p$ is $k'$, the number $r$ must be $\leq k'$ 
and by Proposition~\ref{prop:polyequi} we have $p(w) = (1-w) v(w)$ for some polynomial
$v$, which means that $r$ is well defined.
Next, define $n=r!$ and $\nu=k'-r+1$. 
We choose the special point sequence $0=n_0 < n_1 < \cdots < n_{k'-1}
< n_{k'} = n\nu$ to be $n_j = jn$ for $1\leq j\leq k'-r$ and $n_j$ arbitrary for
$j= k'-r+1,\ldots, k'-1$. Define the indices $s_1 < \cdots < s_{n\nu-k'} $
by the equation
\[
	\{s_1,\ldots, s_{n\nu-k'} \} = \{0,1,\ldots,n\nu-1,n\nu\} \setminus
	\{n_0,n_1,\ldots,n_{k'-1}, n_{k'}\}.
\]
By Part II of the proof, the matrix $\big( h_{s_i - j} \big)_{i,j=1}^{n\nu-k'}$ is
invertible, and thus
the coefficients 
$\lambda_1,\ldots,\lambda_{n\nu-k'}$ in \eqref{eq:delta2} and \eqref{eq:conv}
are given uniquely by the equations (recall
that $\lambda_0 = 1$)
\begin{equation}\label{eq:lambda}
	0 = \sum_{j=0}^{n\nu-k'} \lambda_j h_{s_i-j}=  h_{s_i} +
	\sum_{j=1}^{n\nu-k'} \lambda_j h_{s_i-j},\qquad i=1,\ldots,n\nu-k'.
\end{equation}
Define
\[
	q(w) := \frac{(1 - w^n )^\nu}{p(w)}.
\]
Due to Proposition \ref{prop:polyequi} about the structure of the roots of $p$ and the
definition of $r$ and $n=r!$, the function $q$ 
is a polynomial with degree $ n\nu - k'$. 
Thus, for some coefficients $(\lambda_j')_{j=0}^{n\nu - k'}$ with $\lambda_0'=1$ we have $q(w) =
\sum_{j=0}^{n\nu - k'} \lambda_j' w^j$.
The definition of the numbers $s_1 < \cdots < s_{n\nu-k'}$ implies that the
coefficient of $w^{s_i}$ of the polynomial $(p\cdot q)(w) = (1-w^n)^\nu$
vanishes for all $i=1,\ldots,n\nu-k'$. Thus the coefficients
$\lambda_1',\ldots,\lambda_{n\nu-k'}'$ are given by the same conditions
\eqref{eq:lambda} as the coefficients $\lambda_1,\ldots,\lambda_{n\nu - k'}$.
Therefore, we must have that $\lambda_j = \lambda_j'$ for every $j=0,\ldots,n\nu -
k'$, which also implies (cf. \eqref{eq:conv})
\[
	(1-w^n)^\nu = \sum_{i=0}^{k'} f_{n_i} w^{n_i}.
\]
Now, if $r\geq 2$, there exists an index $n_j$ that is not an integer multiple
of $n$ and $f_{n_j}$ is given by the coefficient of $w^{n_j}$ of the polynomial
$(p\cdot q)(w)=(1-w^n)^\nu$, which is zero. But this is not possible by Lemma
\ref{lem:0}. Therefore, $r=1$, and Proposition~\ref{prop:polyequi} about the structure
of the roots of $p$ implies
that 
\[
	p(w) = \sum_{i=0}^{k'} h_i w^i= (1-w)^{k'} = \sum_{i=0}^{k'} (-1)^i
	{k'\choose i}
	w^i,
\]
which completes the proof.
\end{proof}

Now we are in the position to prove one of our main results, viz.
Theorem~\ref{thm:main_small}.

\begin{proof}[Proof of Theorem \ref{thm:main_small}]
Let $F: P_k \to \mathscr E'(\mathbb R)$ be a mapping satisfying (i)--(iv). By
Proposition \ref{prop:Derivative} and Proposition \ref{prop:sol}, some
derivative $H_1$ of $F$ also satisfies (i)--(iv) and, for some non-negative
integer $n$ and all 
$(b_0,\ldots,b_k)\in P_k$ is of the form
\[
	H_1(b_0,\ldots,b_k) = \sum_{i=0}^{k} f_i D^n \delta_{b_i}
\]
for some coefficients $(f_i)$ depending on $(b_0,\ldots,b_k)$.
Then, we obtain from Theorem \ref{thm:endform}
that 
\[
	H_2(0,\ldots,k') := H_1(-k_\ell,\ldots,k-k_\ell)=
	\lambda\sum_{i=0}^{k'} (-1)^i {k'\choose i} D^n\delta_i
\]
for some non-zero coefficient $\lambda$. Therefore, $H_2(0,\ldots,k')$ is a multiple of
the $(n+k')$th
derivative of the cardinal B-spline function of order $k'$.
Moreover, by partial integration and the fact that B-spline functions have  
localized support, 
\begin{equation}\label{eq:equidistant}
	(H_2(0,\ldots,k';\cdot),u_j) = 0
\end{equation}
for all non-negative integer $j<n+k'$ and the function $u_j(x) = x^j$.
	Let $0=n_0 < \cdots < n_{k'}$ be an arbitrary increasing sequence of
	integers and let $ n_{j} = j$ for $j<0$ and $n_j = n_{k'} + (j-k')$ for
	$j>n_{k'}$ be an equidistant extension.
	As in equation \eqref{eq:delta2},
\[
	H_2(n_0,\ldots,n_{k'}) := H_1(n_{-k_\ell},\ldots,
	n_{k-k_\ell})=\sum_{j=0}^{n_{k'} - k'} \lambda_j H_2(j,\ldots,j+k')
\]
for some coefficients $(\lambda_j)$ and by \eqref{eq:equidistant},
\[
	(H_2(n_0,\ldots,n_{k'};\cdot), u_j)  = 0,\qquad j< n+k'.
\]
Those moment conditions determine the coefficients in the representation
$H_2(n_0,\ldots,n_{k'}) = \sum_{i=0}^{k'} f_i D^n \delta_{n_i}$ up to a
multiplicative constant. Indeed, the moment conditions are then equivalent to
the equations
\[
	\sum_{i=0}^{k'} f_i n_i^j=0,\qquad j=0,\ldots,k'-1.
\]
If we impose the additional normalization equation that $\sum_{i=0}^{k'} f_i
n_i^{k'} = 1$, the coefficients $f_0,\ldots,f_{k'}$ are given as the solution of a
linear system with a Vandermonde matrix that is invertible.
Therefore, also $H_1(n_{-k_\ell},\ldots,n_{k-k_\ell})$  is uniquely determined up to a
multiplicative constant and is thus a constant multiple of the $(n+k')$th
derivative of the B-spline function associated to the points
$(n_{0},\ldots,n_{k'})$. Hence, $F(n_{-k_\ell},\ldots,n_{k-k_\ell})$ differs from
some derivative of a B-spline function (corresponding to the points $n_0,\ldots
n_{k'}$) at most by a polynomial function $p$,
which, due to the involved support conditions must satisfy $p\equiv 0$. 
\end{proof}

Under the assumption that the functions $\ell$ and $r$ are constant on $P_k$,
we can improve the assertion of Theorem~\ref{thm:main_small} by a simple
argument. This will be discussed in the following corollary.

\begin{cor}\label{cor:lconstant}
Suppose that $F$ satisfies conditions (i)-(iv). 

If the functions $\ell$ and $r$ are constant on $P_k$, 
then, for some non-negative integer $n$ and all $(a_0,\ldots,a_k)\in
P_k$, $F(a_0,\ldots,a_k)$ is a
	constant, non-zero multiple of the $n$th distributional derivative of the B-spline 
	function of order $k_r - k_\ell$ with
	respect to the points $a_{k_\ell}, a_{k_\ell + 1},\ldots,a_{k_r}$.
\end{cor}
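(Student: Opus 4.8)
The plan is to reduce $F$ to the delta-function representation used in the proof of Theorem~\ref{thm:main_small}, to use the hypothesis on $\ell$ and $r$ in order to control the support of that representation at \emph{every} point of $P_k$ (and not only on $P_{k,e}$), and then to recover a single B-spline by pairing a Vandermonde argument with moment conditions that I propagate from equidistant tuples to all of $P_k$ by refinement. First I would pin down the constants: since $P_{k,e}\subseteq P_k$ and Corollary~\ref{cor:Pe_ks} gives $\ell\equiv k_\ell$, $r\equiv k_r$ on $P_{k,e}$, constancy of $\ell,r$ on $P_k$ forces $\ell(a_0,\ldots,a_k)=k_\ell$ and $r(a_0,\ldots,a_k)=k_r$ for \emph{all} $(a_0,\ldots,a_k)\in P_k$; equivalently $\supp F(a_0,\ldots,a_k)=[a_{k_\ell},a_{k_r}]$ everywhere. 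Next, exactly as at the beginning of the proof of Theorem~\ref{thm:main_small} (via Proposition~\ref{prop:Derivative} and Proposition~\ref{prop:sol}), I would pass to a derivative $H=D^sF$ which again satisfies (i)--(iv) and has the form $H(a_0,\ldots,a_k)=\sum_{i=0}^k f_i\,D^n\delta_{a_i}$ for all $(a_0,\ldots,a_k)\in P_k$, with $H\not\equiv0$; concretely $s=\max(\rho,0)$ and $n=\max(-\rho,0)$ in terms of the integer $\rho$ of Proposition~\ref{prop:sol}, so that $n+k'-s=k'-\rho\ge0$, where $k'=k_r-k_\ell$. Since $\supp H\subseteq\supp F=[a_{k_\ell},a_{k_r}]$, the coefficients satisfy $f_i=0$ whenever $i<k_\ell$ or $i>k_r$, so in fact $H(a_0,\ldots,a_k)=\sum_{i=k_\ell}^{k_r} f_i\,D^n\delta_{a_i}$.

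The core of the argument is to show that $H(a_0,\ldots,a_k)$ satisfies moment conditions of order $n+k'$ for \emph{every} $(a_0,\ldots,a_k)\in P_k$. As established in the proof of Theorem~\ref{thm:main_small} (via Theorem~\ref{thm:endform}, see \eqref{eq:equidistant}), this holds for the reference equidistant tuple $(-k_\ell,\ldots,k-k_\ell)$; and since vanishing of the moments up to a fixed order is preserved under the affine change of variables relating $H$ on any equidistant tuple to $H$ on this reference tuple through (iii) and (iv), every tuple in $E_k$ inherits these moment conditions. For an arbitrary $(a_0,\ldots,a_k)\in P_k$ I would then put the rational $a_i$ over a common denominator, refine to the corresponding fine equidistant grid, and iterate the span condition (as in the proof of Proposition~\ref{prop:sol}) to write $H(a_0,\ldots,a_k)$ as a finite linear combination of the values of $H$ on the consecutive, equidistant $(k+1)$-windows of that grid; each summand satisfies the moment conditions, hence so does $H(a_0,\ldots,a_k)$. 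Evaluating $(D^n\delta_{a_i},u_j)$ turns these conditions into the homogeneous system $\sum_{i=k_\ell}^{k_r} f_i a_i^{\,s}=0$ for $s=0,\ldots,k'-1$ (the conditions with $j<n$ being automatic), whose coefficient matrix is Vandermonde in the distinct nodes $a_{k_\ell},\ldots,a_{k_r}$ and therefore has a one-dimensional solution space. As $D^{\,n+k'}N(a_{k_\ell},\ldots,a_{k_r})$ is a nonzero element of that space with exactly this delta-derivative shape, and $H(a_0,\ldots,a_k)\neq0$, I conclude $H(a_0,\ldots,a_k)=c\,D^{\,n+k'}N(a_{k_\ell},\ldots,a_{k_r})$ for some $c\neq0$. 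Finally, as at the end of the proof of Theorem~\ref{thm:main_small}, from $H=D^sF$ I obtain $D^s\big(F-c\,D^{\,n+k'-s}N(a_{k_\ell},\ldots,a_{k_r})\big)=0$, so this difference is a polynomial of degree less than $s$; having compact support it vanishes identically, giving $F(a_0,\ldots,a_k)=c\,D^{\,n+k'-s}N(a_{k_\ell},\ldots,a_{k_r})$ with $n+k'-s\ge0$, which is the assertion.

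I expect the main obstacle to be the step that genuinely uses the constancy of $\ell$ and $r$. Refinement and the span condition alone only yield (as in the Remark following Theorem~\ref{thm:main_small}) that $F(a_0,\ldots,a_k)$ is a derivative of \emph{some} spline of order $k'$, not of a single B-spline: the linear combination over the grid windows is not a priori a single B-spline. It is precisely the conjunction of the global support information $\supp H=[a_{k_\ell},a_{k_r}]$ (which eliminates all delta coefficients outside $\{k_\ell,\ldots,k_r\}$) with the moment conditions transported by refinement that, through the Vandermonde system, upgrades ``spline'' to ``B-spline''; recognizing that these are exactly the two ingredients needed, and that neither suffices alone, is the heart of the matter. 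A smaller technical point to verify carefully is that the differentiation $H=D^sF$ does not shrink the support away from the endpoints $a_{k_\ell},a_{k_r}$, so that the global indices of $H$ coincide with those of $F$ and the reference identity \eqref{eq:equidistant} applies with the correct value of $k'$; this follows from the nonvanishing of the extreme delta coefficients in $D^\rho N$ and is implicit in the proof of Theorem~\ref{thm:main_small}.
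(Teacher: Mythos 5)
Your argument is correct, but it takes a genuinely different route from the paper's. The paper's proof is a short reduction to Theorem~\ref{thm:main_small}: given an arbitrary $(a_0,\ldots,a_k)\in P_k$, it inserts $k_\ell$ points between $a_{k_\ell-1}$ and $a_{k_\ell}$ and $k-k_r$ points between $a_{k_r}$ and $a_{k_r+1}$ so that the middle window $(b_0,\ldots,b_k)$ of the refinement lies in $P_{k,e}$ while still having $b_j=a_j$ for $j\in\{k_\ell,\ldots,k_r\}$; the constancy of $\ell$ and $r$ then kills, one by one (by evaluating near $b_0,b_1,\ldots$ and symmetrically from the right), every coefficient in the span-condition expansion except the one attached to that middle window, so $F(a_0,\ldots,a_k)=\lambda_0F(b_0,\ldots,b_k)$ and Theorem~\ref{thm:main_small} applies as a black box. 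You instead re-run the machinery \emph{inside} the proof of Theorem~\ref{thm:main_small}: you pass to the delta representation via Proposition~\ref{prop:sol}, use constancy of $\ell$ and $r$ only to confine the delta coefficients to the indices $k_\ell,\ldots,k_r$, propagate the moment conditions \eqref{eq:equidistant} from equidistant tuples to all of $P_k$ by refining to a common-denominator grid, and finish with the Vandermonde uniqueness. Both are sound; the paper's version is more economical and keeps Theorem~\ref{thm:main_small} as the single analytic input, while yours makes visible the cleaner division of labour (moment conditions hold on all of $P_k$ unconditionally by linearity; it is only the support localization that requires constancy of $\ell$ and $r$) and correctly flags the one technical point your route additionally needs, namely that differentiation does not move the extreme points of the support, so that $k_\ell,k_r,k'$ computed for $H=D^sF$ agree with those of $F$. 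Two cosmetic remarks: the constancy hypothesis gives $\min\supp F=a_{k_\ell}$ and $\max\supp F=a_{k_r}$, not literally $\supp F=[a_{k_\ell},a_{k_r}]$ (for the delta case the support is finite), but you only use the inclusion, so nothing breaks; and when refining to the grid one should take the common denominator $q$ large enough that $q(a_k-a_0)\ge k$, which is automatic since the $a_i$ are $k+1$ distinct multiples of $1/q$.
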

\begin{proof}
	Let $(a_0,\ldots,a_k)\in
	P_k$ be arbitrary. Then, there exists a refinement
	$(b_{-k_\ell},\ldots,b_{2k-k_r})$ of $(a_0,\ldots,a_k)$ with
	$a_j = b_j$ for $j\in \{k_\ell,\ldots,k_r\}$,
	$a_j = b_{j-k_\ell}$ for $j\in \{0,\ldots,k_\ell-1\}$ and 
	$a_j = b_{j+(k-k_r)}$ for $j\in \{k_r + 1,\ldots,k\}$
	so that $(b_{0},\ldots,b_{k})\in P_{k,e}$.
	By the span condition, we get
	\[
		F(a_0,\ldots,a_k) = \sum_{j=-k_\ell}^{k-k_r} \lambda_j
		F(b_j,\ldots,b_{j+k})
	\]
	for some coefficients $(\lambda_j)$. Since $\ell(a_0,\ldots,a_k) =
	\ell(b_j,\ldots,b_{j+k})=k_\ell$ for all $j$ by assumption we get that
	in a neighborhood of $b_{0}$, this equation becomes
	$	0 = \lambda_{-k_\ell} F(b_{-k_\ell},\ldots,b_{k-k_\ell})$,
	which implies $\lambda_{-k_\ell} = 0$. 
	Then, inductively, we deduce that $\lambda_j = 0$ for all
	$j\in \{-k_\ell,\ldots,-1\}$.
	Similarly, using $r$ instead of $\ell$, we obtain $\lambda_j = 0$ for
	$j\in \{1,\ldots,k-k_r\}$.
	Therefore, 
	\begin{equation}
		\label{eq:refine}
		F(a_0,\ldots,a_k) = \lambda_{0} F(b_{0},\ldots,
		b_{ k}).
	\end{equation}
	The left hand side does not vanish identically, which implies
	$\lambda_{0} \neq 0$. Moreover, by Theorem \ref{thm:main_small},
	since $(b_0,\ldots,b_k)\in P_{k,e}$, we obtain that 
	$F(b_{0},\ldots,b_{k})$ is a constant, non-zero multiple of
	the $n$th distributional derivative of the B-spline function $k_r-
	k_\ell$ with respect to the points $(b_{k_\ell},\ldots,b_{k_r}) =
	(a_{k_\ell},\ldots,a_{k_r})$, and thus, by \eqref{eq:refine}, so is $F(a_0,\ldots,a_k)$, which was to
	prove.
\end{proof}

\section{Small values of $k_\ell$ and large values of $k_r$ --- The proof of
	Theorem \ref{thm:main_large}}

In this section, we identify cases for $k_\ell$ and $k_r$ in which the
functions $\ell$ and $r$ are constant in order to deduce Theorem
\ref{thm:main_large}.
\begin{lem}\label{lem:ellzero}
	Suppose that $F$ satisfies conditions (i)--(iv).

	If $\ell(a_0,\ldots,a_k)=0$ for some choice of points $(a_0 , \ldots ,
	a_k)\in P_k$, then $\ell \equiv 0$ on $P_k$. 
	
	If $r(a_0,\ldots,a_k)=k$ for some choice of points $(a_0,
	\ldots,a_k)\in P_k$, then $r\equiv k$ on $P_k$.
\end{lem}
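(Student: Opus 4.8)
The plan is to reduce the global conclusion to the two local lemmas already proved, Lemma~\ref{lem:contraction} and Lemma~\ref{lem:lreq}, by using the hypothesis to pin down the value of the global invariant $k_\ell$ and then feeding that value back into Lemma~\ref{lem:lreq}. I will only treat the statement for $\ell$; the statement for $r$ follows from the mirror-image argument built on the second halves of both lemmas.

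First I would observe that the index $\ell(a_0,\ldots,a_k)$ is invariant under translation and dilation. Indeed, by (iii) and (iv) the distribution $F(\delta a_0+\tau,\ldots,\delta a_k+\tau)$ equals a scalar multiple of $F(a_0,\ldots,a_k;(\cdot-\tau)/\delta)$, and this scalar is nonzero whenever $F(a_0,\ldots,a_k)\not\equiv 0$, since translating and dilating back recovers the original nonzero distribution. As this operation sends $\min\supp$ to $\delta\cdot\min\supp+\tau$ while leaving unchanged which node attains the minimum, the index $\ell$ is preserved. After clearing denominators I may therefore assume that $(a_0,\ldots,a_k)$ is an increasing sequence of integers.

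The decisive step is to show that the hypothesis $\ell(a_0,\ldots,a_k)=0$ forces $k_\ell=0$. For this I apply Lemma~\ref{lem:contraction} with $i_0=0$: the equidistance condition $a_i-a_{i-1}=1$ for $i\le i_0$ is then vacuous, so the lemma gives $\ell(a_0,\ldots,a_k)\ge\min\{1,k_\ell\}$ for every increasing integer tuple. Combined with $\ell(a_0,\ldots,a_k)=0$ this yields $\min\{1,k_\ell\}=0$, and since $k_\ell\ge 0$ always holds, we conclude $k_\ell=0$.

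Finally I would invoke Lemma~\ref{lem:lreq} in the degenerate case $k_\ell=0$. There the auxiliary left block $a_0<\cdots<a_{k_\ell-1}$ is empty and its equidistance requirement is vacuous, so the lemma asserts $\ell(a_0,\ldots,a_k)=k_\ell=0$ for \emph{every} increasing integer tuple $a_0<\cdots<a_k$. Undoing the reduction to integers via the translation/dilation invariance of $\ell$ then gives $\ell\equiv 0$ on all of $P_k$, as desired. I do not expect a genuine obstacle: the substantive work is already encapsulated in Lemmas~\ref{lem:contraction} and~\ref{lem:lreq}, and the only point deserving care is verifying that these lemmas really do apply in the boundary cases $i_0=0$ and $k_\ell=0$ (respectively $i_0=k$ and $k_r=k$ for the statement about $r$), precisely where the equidistance hypotheses degenerate to empty conditions.
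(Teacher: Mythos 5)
Your proof is correct, but it reaches the conclusion by a different route than the paper. The paper's own argument is a direct, self-contained squeeze: given the tuple with $\ell(a_0,\ldots,a_k)=0$, it rescales an \emph{arbitrary} tuple $(b_0,\ldots,b_k)$ into the first gap so that $b_0'=a_0<\cdots<b_k'=a_1$, and then the span and support conditions give $F(a_0,\ldots,a_k)=\lambda F(b_0',\ldots,b_k')$ on a neighbourhood of $a_0$; since the left side does not vanish there, neither does the right, so $\ell(b_0,\ldots,b_k)=0$ — two lines, no appeal to $k_\ell$. You instead treat the lemma as a formal consequence of the two earlier results: Lemma~\ref{lem:contraction} in the degenerate case $i_0=0$ forces $k_\ell=0$ from the existence of a single tuple with $\ell=0$, and Lemma~\ref{lem:lreq} in the degenerate case $k_\ell=0$ (whose proof in the paper is, in effect, the same squeeze performed for the reference tuple $(0,\ldots,k)$) then yields $\ell\equiv 0$ on all integer tuples, hence on $P_k$ by the translation/dilation invariance of $\ell$, which you correctly justify. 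I checked the two boundary cases you flag: with $i_0=0$ the hypothesis of Lemma~\ref{lem:contraction} is vacuous and its proof still delivers $\min\supp F(a_0,\ldots,a_k)\geq k_\ell$, hence $\ell\geq\min\{1,k_\ell\}$; and with $k_\ell=0$ the left block in Lemma~\ref{lem:lreq} is empty, so it applies to every increasing integer tuple. The mirror argument for $r$ with $i_0=k$ and $k_r=k$ goes through identically. What your approach buys is economy (everything is delegated to already-proved lemmas) and the explicit intermediate fact $k_\ell=0$; what the paper's approach buys is independence from Lemmas~\ref{lem:contraction} and~\ref{lem:lreq} and a shorter, more transparent argument. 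Both are valid.
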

\begin{proof}
	We only prove the part about $\ell$, the proof for $r$ is similar.

	Fix $(a_0,\ldots,a_k)\in P_k$ with $\ell(a_0,\ldots,a_k)=0$.
	Let $b_0'=a_0 < b_1' < \cdots < b_k' = a_1$ be a rescaling of the arbitrary
	points $(b_0, \ldots, b_k)\in P_k$. Then, by the span condition, in a
	neighborhood $U$ of $a_0 = b_0'$,
	\[	
		F(a_0,\ldots,a_k) = \lambda F(b_0',\ldots, b_k')
	\]
	for some coefficient $\lambda$. Since by assumption,
	the left hand side
	does not vanish identically on $U$, also the right hand side does not
	vanish identically on $U$
	which exactly means $\ell(b_0',\ldots ,b_k') = \ell(b_0,\ldots,b_k) =
	0$.
\end{proof}

\begin{lem}\label{lem:ellone}
	Suppose that $F$ satisfies conditions (i)--(iv).

	If $\ell(a_0,\ldots,a_k)=1$ for some choice of points $(a_0 , \ldots ,
	a_k)\in P_k$, then $\ell \equiv 1$ on $P_k$.

	If $r(a_0,\ldots,a_k)=k-1$ for some choice of points $(a_0,
	\ldots,a_k)\in P_k$, then $r\equiv k-1$ on $P_k$.
\end{lem}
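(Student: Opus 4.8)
The statement is symmetric under the reflection $x\mapsto -x$: the mapping $\widetilde F(a_0,\ldots,a_k)(t)=F(-a_k,\ldots,-a_0)(-t)$ again satisfies (i)--(iv), and one checks $\ell_{\widetilde F}(a_0,\ldots,a_k)=k-r_F(-a_k,\ldots,-a_0)$, so the assertion about $r$ for $F$ is exactly the assertion about $\ell$ for $\widetilde F$. Hence the plan is to prove only the claim for $\ell$, and to split it into the two inequalities $\ell\ge 1$ and $\ell\le 1$ on $P_k$. The lower bound is immediate from what is already available: since $\ell(a_0,\ldots,a_k)=1\ne 0$, the function $\ell$ is not identically $0$, so by the contrapositive of Lemma~\ref{lem:ellzero} it cannot take the value $0$ anywhere; thus $\ell\ge 1$ everywhere and in particular $k_\ell=\ell(0,\ldots,k)\ge 1$.

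The heart of the matter is the upper bound, and the first thing I would isolate is a single-knot-insertion move that propagates the value $\ell=1$. Suppose $\ell(c_0,\ldots,c_k)=1$, fix an index $j\in\{1,\ldots,k-1\}$ and a rational $\tilde c\in(c_j,c_{j+1})$. Inserting $\tilde c$ and applying the span condition (ii) writes $F(c_0,\ldots,c_k)$ as a linear combination of the two windows $F(c_0,\ldots,c_j,\tilde c,c_{j+1},\ldots,c_{k-1})$ and $F(c_1,\ldots,c_j,\tilde c,c_{j+1},\ldots,c_k)$. The second window has first knot $c_1$ and \emph{second} knot strictly larger than $c_1$; since $\ell\ge 1$ holds everywhere, its support starts at or after that second knot, so it vanishes on a neighbourhood $U=(c_1-\varepsilon,c_1+\varepsilon)$ of $c_1$. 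As $\min\supp F(c_0,\ldots,c_k)=c_1$, the left-hand side does not vanish on $U$, which forces the coefficient of the first window to be nonzero and that window not to vanish on $U$; combined with the lower bound $\ell\ge 1$ (its second knot being $c_1$) this pins $\min\supp$ of the first window to $c_1$, i.e. $\ell(c_0,\ldots,c_j,\tilde c,c_{j+1},\ldots,c_{k-1})=1$. It is worth noting that this is precisely the mechanism that breaks down one level higher: for the value $2$ the corresponding neighbouring window would have to be killed using $\ell\ge 2$, which is \emph{not} available for free, and this is exactly the loophole exploited by Example~\ref{ex:counter}.

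Assuming for the moment that $k_\ell=1$, I would then finish as follows. Lemma~\ref{lem:lreq} supplies, for free, the value $\ell=1$ at every integer partition whose first gap equals $1$, and such partitions have arbitrarily large spread. Starting from a base partition $(0,1,P_2,\ldots,P_k)$ with integers $P_2>t_k$ and $P_{i+1}=P_i+1$, I would reach an arbitrary rational shape $(0,1,t_2,\ldots,t_k)$ by inserting $t_2,\ldots,t_k$ from left to right: at each step the next target $t_{i+1}$ lies in the gap $(t_i,P_{\bullet})$, so the move of the previous paragraph (with $j=i$) keeps the already-placed knots $0,1,t_2,\ldots,t_i$, installs $t_{i+1}$, and discards one of the large padding knots, preserving $\ell=1$ throughout. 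After $k-1$ steps all padding is gone and the target is reached, whence $\ell\equiv 1$ on all of $P_k$.

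The hard part will therefore be the input $k_\ell=1$, and here lies the main obstacle: the clean single-insertion move can only \emph{decrease} the spread of a partition (it always drops the rightmost knot), so it can never reduce the first gap of the given partition, and thus cannot by itself produce a configuration to which Lemma~\ref{lem:contraction} applies. To get $k_\ell=1$ I would instead install a block of equal initial gaps by inserting several knots at once into a refinement of the given $\ell=1$ partition; then $1=\ell\ge\min(i_0+1,k_\ell)\ge\min(2,k_\ell)$ forces $k_\ell\le 1$ and hence $k_\ell=1$. The difficulty is that after a multi-knot insertion the relevant window no longer sits alone over the turn-on point: several windows straddling it compete there. I expect the essential technical work to be the neutralisation of these competing windows—arranging the inserted knots into equal initial gaps and invoking Lemma~\ref{lem:contraction} to push their supports strictly to the right of the turn-on point, in the same inductive, shift-based spirit as the proof of Lemma~\ref{lem:lreq}. (If one prefers, one can first reduce to the two forms of $F$ given by Proposition~\ref{prop:sol}; in the pure B-spline case $\rho\ge 1$ the relation $\alpha_0\equiv 0$ already does much of the work, while the delta case is constrained by Theorem~\ref{thm:endform}.) Once $k_\ell=1$ is secured, the previous paragraph completes the proof.
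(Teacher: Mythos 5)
Your reflection reduction of $r$ to $\ell$, the lower bound $\ell\ge 1$ via Lemma~\ref{lem:ellzero}, the single-knot-insertion move, and the passage from $k_\ell=1$ to $\ell\equiv 1$ are all sound; indeed your last step (padding an integer partition with unit gaps on the right, which has $\ell=1$ by Lemma~\ref{lem:lreq}, and then installing the target knots $t_2,\ldots,t_k$ one at a time from the left) is a somewhat more direct route than the paper's, which instead coarsens the target partition to one with all gaps at least $a_1-a_0$ and then compares supports of a refinement identity on $(-\infty,a_2)$. The genuine gap is the step you yourself flag as the hard one: deducing $k_\ell=1$ from a single partition with $\ell=1$. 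There your text stops at a sketch (``install a block of equal initial gaps \ldots I expect the essential technical work to be the neutralisation of these competing windows''), and that sketch does not close. Whichever gap you insert the new knots into, the window whose second knot is the turn-on point $c_1$ of the left-hand side retains the original first gap $c_1-c_0$, while any window carrying a block of equal initial gaps has a second knot at which the left-hand side is already known to vanish; moreover Lemma~\ref{lem:contraction} only yields \emph{lower} bounds on $\ell$ (and only for integer sequences, so the remaining knots must also sit on the lattice of the equal gaps), so it can silence competitors but cannot certify that the equal-gap window turns on at its second knot. The parenthetical alternative via Proposition~\ref{prop:sol} or Theorem~\ref{thm:endform} does not repair this: knowing $\alpha_0=0$ for one partition says nothing about $F(0,\ldots,k)$, and Theorem~\ref{thm:endform} sits logically downstream of the present lemma.

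The idea you are missing is the paper's self-similar refinement. Iterating your own insertion move (always into the second gap) produces, for every $j$, a partition with geometrically decreasing initial gaps $n^{j-1},n^{j-2},\ldots,n,1,\ldots,1$ and $\ell=1$. One then takes the partition $0,\,x,\,x+x/n,\ldots$ with $x=\sum_{i=0}^{d}n^i$ and refines its first gap by the partial sums $b_j=\sum_{i=d-j+1}^{d}n^i$: every window $(b_j,\ldots,b_{j+k})$ with $j<d$ is a rescaling of one of the geometric partitions, hence turns on exactly at $b_{j+1}<x$, and evaluating near $b_1,\ldots,b_d$ kills their coefficients one by one; the sole surviving window $(b_d,\ldots,b_{d+k})$ is \emph{equidistant} and is forced to turn on at $b_{d+1}=x$, i.e.\ at its second knot, which is exactly $k_\ell=1$. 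Without this (or an equivalent) construction your argument for $k_\ell=1$, and hence the whole proof, is incomplete.
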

\begin{proof}
	We only prove the part about $\ell$, the proof for $r$ is similar.
	
	We begin by proving $k_\ell=1$. 
Let $a_0 < \cdots < a_k$ be a sequence of integers with
$\ell(a_0,\ldots,a_k)=1$ and $a_1=0$.
Inserting a point $\alpha$ to the right of $a_1$ yields two coefficients
$\lambda,\mu$ with
\[
	F(a_0,\ldots,a_k) = \lambda F(b_0,\ldots,b_k) + \mu
	F(b_1,\ldots,b_{k+1}),
\]
where $b_0 < \cdots < b_{k+1}$ is the increasing rearrangement of the points
$(a_0,\ldots,a_k,\alpha)$. Since by Lemma \ref{lem:ellzero},
$\ell(b_1,\ldots,b_{k+1})\geq 1$, we evaluate the above equation in a
sufficiently small neighborhood of $a_1$ which
yields $\lambda\neq 0$ and $\ell(b_0,\ldots, b_k) = 1$.
Performing this procedure iteratively and using dilation invariance of $F$, we
are able to choose an integer $n$ and construct point
sequences $a_0^{(j)} < \cdots < a_k^{(j)}$ for $j=1,\ldots $ satisfying
\begin{align*}
	a_i^{(j)} - a_{i-1}^{(j)} &= n^{j-i}\qquad \text{if }i \leq j, \\ 
	a_i^{(j)} - a_{i-1}^{(j)} &= 1\qquad \text{if }i>j
\end{align*}
for all $i=1,\ldots,k$ so that $\ell(a_0^{(j)},\ldots,a_k^{(j)}) = 1$ for all
$j$. Let $d$ be such that with $x:=\sum_{j=0}^d n^j$ we have $x/n \geq
k$. Next we choose $a_0 = 0, a_1 = x, a_{i+1} = a_i+ x/n$ for $i=1,\ldots,k-1$.
We refine by inserting the points 
\begin{align*}
	b_j &= \sum_{i= d-j+1}^d n^{i}, \qquad \text{for }j = 0,\ldots,d+1, \\
	b_j &= b_{j-1} + 1\qquad \text{for } j = d+2,\ldots.
\end{align*}
Observe that $b_{d+1} = a_1$ and $b_{d+2} < \cdots < b_{d+1+k} \leq a_2$ since
$x/n \geq k$.
Therefore, by the span condition and Lemma \ref{lem:ellzero} again, on $(-\infty,b_{d+2})$ we have
\begin{equation}\label{eq:geom}
	F(a_0,\ldots,a_k) = \sum_{j=0}^{d}  \lambda_j F(b_j,\ldots, b_{j+k}).
\end{equation}
Due to the above remarks and dilation invariance, $\ell(a_0,\ldots,a_k) =
\ell(b_j,\ldots,b_{j+k}) = 1$ for all $j=0,\ldots,d-1$ and
$\ell(b_j,\ldots,b_{j+k}) \geq 1$ for $j=d$.
Therefore, evaluating equation \eqref{eq:geom} successively on sufficiently
small neighborhoods of 
$b_1<\cdots<b_d$ respectively and using $b_d < a_1$,
we obtain $\lambda_0 = \cdots = \lambda_{d-1} = 0$ and we get
\begin{equation}\label{eq:geom1}
	F(a_0,\ldots,a_k) =  \lambda_d F(b_d,\ldots, b_{d+k}).
\end{equation}
Evaluating this equation on a sufficiently small neighborhood of $b_{d+1}=a_1$
we obtain $\ell(b_{d},\ldots,b_{d+k})
= 1$ but $b_{d} < \cdots < b_{d+k}$ is an equidistant partition, which
implies $k_\ell=1$.
	
Next, we show that for arbitrary $(a_0,\ldots,a_k)\in P_k$, we have
$\ell(a_0,\ldots,a_k)=1$. Without loss of generality, let $a_0< \cdots < a_k$ be integers. By
	Lemma \ref{lem:lreq}, since $k_\ell=1$, we know that $\ell(a_0,\ldots,a_k)=k_\ell=1$
	provided that $a_1 - a_0 = 1$. Let's now treat the case $a_1 - a_0 > 1$.
	
	Inductively we define a coarser set of points than $a_0 < \cdots  < a_k$
	that falls in the category where Lemma \ref{lem:lreq} can be applied and is suitable for our purpose.
	Set $b_0^{(0)} = a_0,\ldots ,b_k^{(0)} = a_k$.

	Assume that $b_0^{(j)} < \cdots < b_k^{(j)}$ is defined, then let $m\leq
	k$ be the smallest index with 
	\[
		b_m^{(j)} - b_{m-1}^{(j)} < a_1 - a_0.
	\]
	If no such index exists, we stop the induction and set $b_0 = b_0^{(j)},
	\ldots, b_k = b_k^{(j)}$. 
	If such an index exists, we define
	\begin{align*}
		b_i^{(j+1)} &= b_i^{(j)}&\text{if }i< m, \\
		b_i^{(j+1)} &= b_{i+1}^{(j)}&\text{if }m\leq i < k, \\
		b_k^{(j+1)} &= b_{k-1}^{(j+1)} + (a_1 - a_0).
	\end{align*}

	We remark that the procedure stops at some point and by Lemma
	\ref{lem:lreq} and dilation invariance, $\ell(b_0,\ldots,b_k)=1$. Now,
	let $a_{k+1} < \cdots < a_n$ be the points in $(b_0,\ldots,b_k)$ that
	are larger than $a_k$. Then the span condition implies
	\[
		F(b_0,\ldots,b_k) = \sum_{j=0}^{n-k} \lambda_j
		F(a_j,\ldots,a_{j+k})
	\]
	for some coefficients $(\lambda_j)$.

	If we now assume that $\ell(a_0,\ldots,a_k) > 1$ then the right hand
	side vanishes identically on $(-\infty,a_2)$ since $\ell(a_j, \ldots ,
	a_{j+k})\geq 1$ for all $j$ by Lemma \ref{lem:ellzero}. On the other
hand, the left hand side does not vanish identically on $(-\infty,a_2)$
since
	$a_2 > a_1 = b_1$ and $\ell(b_0,\ldots,b_k)=1$. Therefore, we conclude $\ell(a_0,\ldots,a_k)
	\leq 1$ and also, by Lemma \ref{lem:ellzero} again,
	$\ell(a_0,\ldots,a_k)= 1$.
\end{proof}

\begin{lem}\label{lem:elltwo}
	Suppose that $F$ satisfies conditions (i)--(iv).

	If $k_\ell = 2$ then $\ell\equiv 2$ on $P_k$.

	If $k_r = k-2$ then $r\equiv k-2$ on $P_k$.
\end{lem}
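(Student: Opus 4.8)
The plan is to prove only the statement about $\ell$; the statement about $r$ then follows by the mirror-image argument, inserting points near the right end and invoking the $r$-parts of Lemmas~\ref{lem:ellzero}, \ref{lem:ellone} and~\ref{lem:lreq}. By translation and dilation invariance (conditions (iii),(iv)), every tuple in $P_k$ is a rescaled and translated integer tuple, so it suffices to prove $\ell(a_0,\ldots,a_k)=2$ for every increasing sequence of integers $a_0<\cdots<a_k$. I first record that $F(a_0,\ldots,a_k)$ never vanishes identically: this holds for \emph{every} tuple once $F(0,\ldots,k)\not\equiv0$, which is built into the definition of $k_\ell$, by the span condition together with (iii),(iv) (as noted in the introduction). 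The first real step is the lower bound. Since $k_\ell=2$ means $\ell(0,\ldots,k)=2$, the map $\ell$ is neither identically $0$ nor identically $1$, so the contrapositives of Lemmas~\ref{lem:ellzero} and~\ref{lem:ellone} give $\ell(a_0,\ldots,a_k)\notin\{0,1\}$, hence $\ell\geq2$, for every tuple; equivalently $\min\supp F(a_0,\ldots,a_k)\geq a_2$ throughout.

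The heart of the proof is the matching upper bound, which I would obtain by strong induction on $a_2-a_0\in\{2,3,\ldots\}$, the induction being over all integer $(k+1)$-tuples simultaneously. In the base case $a_2-a_0=2$ both initial gaps equal $1$, so Lemma~\ref{lem:lreq} applies directly and yields $\ell(a_0,\ldots,a_k)=k_\ell=2$. For $a_2-a_0\geq3$ at least one of $a_1-a_0$ and $a_2-a_1$ exceeds $1$. If $a_1-a_0\geq2$, I insert the rational point $a_0+1\in(a_0,a_1)$, and the span condition (ii) writes $F(a_0,\ldots,a_k)$ as $\lambda\,F(a_0,a_0+1,a_1,\ldots,a_{k-1})+\mu\,F(a_0+1,a_1,\ldots,a_k)$. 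For both windows the third-point-minus-first-point quantity is strictly smaller than $a_2-a_0$ (namely $a_1-a_0$ and $a_2-a_0-1$), so the induction hypothesis gives that their minimal supports equal their respective third points, i.e.\ $a_1$ and $a_2$. Evaluating the identity on a small neighbourhood of $a_1$---where the second window vanishes and, by the lower bound ($\min\supp\geq a_2>a_1$), so does the left-hand side, while the first window does not---forces $\lambda=0$. Hence $F(a_0,\ldots,a_k)=\mu\,F(a_0+1,a_1,\ldots,a_k)$ with $\mu\neq0$ (the left-hand side is not identically zero), whence $\min\supp F(a_0,\ldots,a_k)=a_2$, i.e.\ $\ell=2$. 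If instead $a_2-a_1\geq2$, I insert $a_1+1\in(a_1,a_2)$ and run the identical computation, the decisive evaluation now taking place on a neighbourhood of $a_1+1<a_2$; again both windows have strictly smaller $a_2-a_0$, so the induction closes.

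The step I expect to be the main obstacle is exactly this upper bound, and specifically the choice of reduction. The naive route---mimicking Lemma~\ref{lem:ellone} by coarsening $(a_0,\ldots,a_k)$ to a partition on which Lemma~\ref{lem:lreq} applies---fails for $k_\ell=2$, because that lemma now demands that the first \emph{two} gaps be equal, and no coarsening can arrange this while keeping $(a_0,\ldots,a_k)$ as the leftmost window; the leftmost window always reaches the same minimal support as the coarsened distribution, so no contradiction can be extracted. The resolution is not to coarsen but to shave a single unit off whichever of the two initial gaps is larger by one point insertion, which is precisely what lets the induction on $a_2-a_0$ descend to the equidistant base case covered by Lemma~\ref{lem:lreq}. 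The only remaining points needing care are bookkeeping: that each application of the span condition produces the two candidate windows with the stated endpoints, that the quantity $a_2-a_0$ strictly decreases for both of them, and that the surviving coefficient $\mu$ is nonzero, the last following from the non-vanishing of $F(a_0,\ldots,a_k)$.
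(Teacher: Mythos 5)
Your overall architecture differs from the paper's: the paper re-runs the shift-induction from the proof of Lemma~\ref{lem:lreq} with the improved input $\ell\geq 2$, i.e.\ it works ``top down'' by anchoring every tuple inside a partition of $[0,k]$ and extracting information from the known behaviour of $F(0,\ldots,k)$ near $k_\ell$; you instead work ``bottom up'' by induction on $a_2-a_0$, shaving one unit off an initial gap by a single point insertion. Your lower bound via the contrapositives of Lemmas~\ref{lem:ellzero} and~\ref{lem:ellone} is exactly the paper's first step, and the bookkeeping in your inductive step (which window has which third point, why $\lambda=0$ after evaluating near $a_1$ resp.\ $a_1+1$) is correct.

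The gap is your opening claim that $F(a_0,\ldots,a_k)$ never vanishes identically, justified ``as noted in the introduction.'' The introduction proves the \emph{converse} implication: if $F(0,\ldots,k)\equiv 0$ then $F\equiv 0$ on all of $P_k$, via $F(n_0,\ldots,n_k)=\sum_j\lambda_j F(j,\ldots,j+k)$. That identity does not show that $F(n_0,\ldots,n_k)\not\equiv 0$ when $F(0,\ldots,k)\not\equiv 0$, since the $\lambda_j$ may vanish or cancel; nothing established in the paper prior to this lemma gives non-vanishing off the set covered by Lemma~\ref{lem:lreq} and Corollary~\ref{cor:Pe_ks}. This matters because your induction is structurally unable to supply it: after you deduce $F(a_0,\ldots,a_k)=\mu F(W_2)$, the span condition is perfectly consistent with $\mu=0$, so without the non-vanishing input your argument only yields the dichotomy ``$\min\supp F(a_0,\ldots,a_k)=a_2$ or $F(a_0,\ldots,a_k)\equiv 0$,'' i.e.\ $\ell\in\{2,\infty\}$, which is weaker than the lemma. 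Expanding a tuple into finer windows can never certify that the tuple's own distribution is nonzero; that is precisely what the paper's anchored argument delivers, by showing that in $F(0,\ldots,k)=\sum_j\mu_jF(c_j,\ldots,c_{j+k})$ all terms except the one corresponding to the target tuple vanish near $k_\ell$, so the target cannot vanish there either. To repair your proof you would need to graft such an anchoring step onto your induction (or carry non-vanishing as part of the inductive statement in the top-down direction), at which point you are essentially back to the paper's proof of Lemma~\ref{lem:lreq}.
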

\begin{proof}
If $k_\ell=2$ then we also know that $\ell(a_0,\ldots,a_k) \geq 2$ for
all $(a_0,\ldots,a_k)\in P_k$
since otherwise Lemmas~\ref{lem:ellzero} and \ref{lem:ellone} would imply
$k_\ell < 2$.
Therefore, formula \eqref{eq:facts1} in the proof of Lemma~\ref{lem:lreq} can be
improved upon to
\[
	\left\{
		\begin{aligned}
			\ell(c_i,\ldots,c_{i+k}) & = 2\qquad &\text{if 
			$i<m$}, \\
			\ell(c_i,\ldots,c_{i+k}) &\geq 2\qquad&\text{ if
			$m\leq i$}, 
		\end{aligned}
		\right.
\]
Going then through the proof of Lemma \ref{lem:lreq} and using those improved
facts in the induction on $m$, we obtain that for any
$(a_0,\ldots,a_k) \in P_k$, we have $\ell(a_0,\ldots,a_k) = 2$.
Similar arguments work in the case $k-k_r =2$ yielding $r \equiv k-2$ on $P_k$
if $k_r = k-2$.
\end{proof}

Once we know that the functions $\ell$ and $r$ are constant, it is an easy
matter to deduce Theorem~\ref{thm:main_large} from
Theorem~\ref{thm:main_small} using Corollary~\ref{cor:lconstant}.
In fact, we can even show the following refinement of Theorem~\ref{thm:main_large}:
\begin{thm}\label{thm:main_large_refined}
	Suppose that $F$ satisfies conditions (i)-(iv) and that	one of
	the following two conditions is true:
	\begin{enumerate}
		\item there exists $(b_0,\ldots,b_k)\in P_k$ so that one of the four pairs 
	$\{b_0,b_k\},  \{b_0,b_{k-1}\}$, $ \{b_1,b_{k-1}\}, \{b_1,b_k\}$ is
	contained in the support of $F(b_0,\ldots,b_k)$.
\item $\max(k_\ell, k-k_r) \leq 2$.
	\end{enumerate}

	Then, for some non-negative integer $n$ and all $(a_0,\ldots,a_k) \in
	P_k$,
	$F(a_0,\ldots,a_k)$ is a constant, non-zero multiple of the $n$th distributional
	derivative of the B-spline function of order $k_r - k_\ell$ with respect
	to the points $a_{k_\ell}, a_{k_\ell+1} ,\ldots ,a_{k_r}$.
\end{thm}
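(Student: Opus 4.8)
The plan is to reduce the theorem to Corollary~\ref{cor:lconstant} by showing that each of the two hypotheses forces the functions $\ell$ and $r$ to be constant on $P_k$. Once $\ell$ and $r$ are constant, Corollary~\ref{cor:lconstant} yields directly that $F(a_0,\ldots,a_k)$ is a non-zero constant multiple of the $n$th distributional derivative of the B-spline on the points $a_{k_\ell},\ldots,a_{k_r}$, which is precisely the asserted conclusion. Thus the entire argument comes down to establishing the constancy of $\ell$ and $r$ under either hypothesis.

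First I would treat hypothesis (2), namely $\max(k_\ell,k-k_r)\leq 2$. Since $k_\ell=\ell(0,\ldots,k)$ and $k_r=r(0,\ldots,k)$, this forces $k_\ell\in\{0,1,2\}$ and $k-k_r\in\{0,1,2\}$. For the left index I would split into three cases: if $k_\ell=0$ then $\ell$ vanishes at the equidistant tuple, so $\ell\equiv0$ by Lemma~\ref{lem:ellzero}; if $k_\ell=1$ then $\ell\equiv1$ by Lemma~\ref{lem:ellone}; and if $k_\ell=2$ then $\ell\equiv2$ by Lemma~\ref{lem:elltwo}. The symmetric halves of Lemmas~\ref{lem:ellzero}--\ref{lem:elltwo} dispose of $r$ in the three cases $k_r=k,\,k-1,\,k-2$, so both $\ell$ and $r$ are constant.

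Next I would treat hypothesis (1). The key translation is that for the given tuple one has $\min\supp F(b_0,\ldots,b_k)=b_{\ell(b_0,\ldots,b_k)}$ and $\max\supp F(b_0,\ldots,b_k)=b_{r(b_0,\ldots,b_k)}$; abbreviating $\ell=\ell(b_0,\ldots,b_k)$ and $r=r(b_0,\ldots,b_k)$, this means that $b_0\in\supp F(b_0,\ldots,b_k)$ forces $\ell=0$, the membership $b_1\in\supp F(b_0,\ldots,b_k)$ forces $\ell\in\{0,1\}$, and symmetrically $b_k\in\supp F(b_0,\ldots,b_k)$ forces $r=k$ while $b_{k-1}\in\supp F(b_0,\ldots,b_k)$ forces $r\in\{k-1,k\}$. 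Each of the four admissible pairs consists of one left point from $\{b_0,b_1\}$ and one right point from $\{b_{k-1},b_k\}$, so whichever pair lies in the support simultaneously pins $\ell$ into $\{0,1\}$ and $r$ into $\{k-1,k\}$. Applying Lemma~\ref{lem:ellzero} when the relevant value is $0$ (respectively $k$) and Lemma~\ref{lem:ellone} when it is $1$ (respectively $k-1$), I upgrade this single-tuple information to the statement that $\ell$ and $r$ are constant on all of $P_k$.

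With $\ell$ and $r$ constant under either hypothesis, the theorem follows from Corollary~\ref{cor:lconstant}. I expect no serious obstacle, since the hard analytic content is already packaged into Theorem~\ref{thm:main_small} (through Corollary~\ref{cor:lconstant}) and into the constancy Lemmas~\ref{lem:ellzero}--\ref{lem:elltwo}; the only real care is the bookkeeping in hypothesis (1), namely correctly reading off the bound on $\ell$ or $r$ from each membership statement and checking that each of the four pairs controls both endpoints simultaneously. It is worth noting that, via the constancy lemmas, hypothesis (1) in fact forces $k_\ell\in\{0,1\}$ and $k_r\in\{k-1,k\}$, and is therefore a special case of hypothesis (2); stating it separately simply records that this purely local support condition at a single tuple already suffices.
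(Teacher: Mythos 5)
Your proposal is correct and follows essentially the same route as the paper: both hypotheses are reduced, via Lemmas~\ref{lem:ellzero}, \ref{lem:ellone} and \ref{lem:elltwo}, to the constancy of $\ell$ and $r$ on $P_k$, after which Corollary~\ref{cor:lconstant} gives the conclusion. Your spelled-out bookkeeping for hypothesis (1) (reading $\ell\in\{0,1\}$ and $r\in\{k-1,k\}$ off the support membership) is exactly what the paper's terser proof leaves implicit.
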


\begin{proof}
	The first assumption (1) implies by
	Lemmas~\ref{lem:ellzero} and \ref{lem:ellone} that the functions $\ell$
	and $r$ are constant on $P_k$.
	The second assumption (2) implies by
	Lemmas~\ref{lem:elltwo}, \ref{lem:ellone}, \ref{lem:ellzero} that the 
	functions $\ell$ and $r$ are constant on
	$P_k$.
	
	In both cases, Corollary~\ref{cor:lconstant} then implies the assertion
	of Theorem \ref{thm:main_large}.
\end{proof}

The following theorem summarizes a few important consequences
of our results and it follows directly from Theorems~\ref{thm:main_small} and
\ref{thm:main_large_refined}.
\begin{thm}[B-Spline characterization]
	\label{thm:char}
	Let $F : P_k \to \mathscr E'(\mathbb R)$ be a mapping satisfying conditions (i)--(iv). Then,
	the following statements are true.
	\begin{enumerate}[(a)]
		\item If there exists $(a_0,\ldots,a_k)\in P_k$ so that
			$F(a_0,\ldots,a_k)$ is non-negative and if there exists
			$(b_0,\ldots,b_k)\in P_k$ so that $\{b_0,b_k\}$ is
			contained in the support of $F(b_0,\ldots,b_k)$ then,
			for all $(c_0,\ldots,c_k)\in P_k$, $F(c_0,\ldots,c_k)$ is
			a constant non-zero multiple of the B-spline
			corresponding to the points $(c_0,\ldots,c_k)$.
		\item If there exists $(a_0,\ldots,a_k)\in P_k$ so that
			$0\not\equiv F(a_0,\ldots,a_k)\in C^{k-2}$, then, for all
			$(b_0,\ldots,b_k)\in P_k$, $F(b_0,\ldots,b_k)$ is a
			constant, non-zero multiple of the B-spline
			corresponding to $(b_0,\ldots,b_k)$.
		\item There does not exist $(a_0,\ldots,a_k)\in P_k$ so that $0\not\equiv
			F(a_0,\ldots,a_k) \in C^{k-1}$.
	\end{enumerate}
\end{thm}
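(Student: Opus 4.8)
The plan is to derive all three statements from the structural results already established, namely Proposition~\ref{prop:sol}, which gives the form of $F$ at every point with a single global order parameter $\rho$, and Theorem~\ref{thm:main_large_refined}, supplemented by one elementary fact about the smoothness of compactly supported splines.

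For part (a), I would first note that the inclusion $\{b_0,b_k\}\subseteq\supp F(b_0,\ldots,b_k)$ together with the support condition (i) forces $b_0=\min\supp F(b_0,\ldots,b_k)$ and $b_k=\max\supp F(b_0,\ldots,b_k)$, i.e. $\ell(b_0,\ldots,b_k)=0$ and $r(b_0,\ldots,b_k)=k$. By Lemma~\ref{lem:ellzero} this gives $k_\ell=0$ and $k_r=k$, and condition (1) of Theorem~\ref{thm:main_large_refined} holds; hence there is a fixed $n\ge 0$ with $F(c_0,\ldots,c_k)=\lambda_c\,D^nN(c_0,\ldots,c_k)$, a B-spline of order $k_r-k_\ell=k$, with $\lambda_c\neq 0$, for all $(c_0,\ldots,c_k)\in P_k$. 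It remains to show $n=0$, and this is where the non-negativity hypothesis enters: testing $F(a_0,\ldots,a_k)=\lambda\,D^nN(a_0,\ldots,a_k)$ against a smooth function equal to $1$ on a neighbourhood of its compact support gives total mass zero whenever $n\ge 1$, since the $n$th derivative of that test function vanishes there. As a non-negative distribution is a positive measure, vanishing total mass forces it to be the zero distribution, contradicting $\lambda\neq 0$ and $D^nN\not\equiv 0$. Thus $n=0$ and $F(c_0,\ldots,c_k)$ is a non-zero multiple of the B-spline for every $c$.

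For parts (b) and (c) the key observation is that a nonzero, compactly supported spline $\sum_j\alpha_jN(a_j,\ldots,a_{j+\rho})$ of order $\rho$ lies in $C^{\rho-2}$ but never in $C^{\rho-1}$: being piecewise polynomial of degree $\rho-1$, a $C^{\rho-1}$ representative would be a global polynomial of degree at most $\rho-1$, hence identically zero by compact support. Thus its exact smoothness index is $\rho-2$. By Proposition~\ref{prop:sol} the map $F$ has, for all points at once, either this spline form with $1\le\rho\le k'$, or the form $\sum_i\alpha_iD^{-\rho}\delta_{a_i}$ with $\rho\le 0$; in the second case $F(a_0,\ldots,a_k)$ is a nonzero combination of (derivatives of) Dirac masses, hence not a continuous function. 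For (c), if $0\not\equiv F(a_0,\ldots,a_k)\in C^{k-1}\subseteq C^0$ the delta case is excluded and we obtain a spline of order $\rho\le k'\le k$ whose smoothness index satisfies $k-1\le\rho-2$, i.e. $\rho\ge k+1$, contradicting $\rho\le k$; so no such point exists. For (b), if $0\not\equiv F(a_0,\ldots,a_k)\in C^{k-2}$ the same dichotomy gives a spline of order $\rho$ with $k-2\le\rho-2$, forcing $\rho=k'=k$ and therefore $k_\ell=0$, $k_r=k$; since $k-\rho=0$, Proposition~\ref{prop:sol}(1) then reads $F(c_0,\ldots,c_k)=\alpha_cN(c_0,\ldots,c_k)$ at every point, with $\alpha_c\neq 0$ because $F$ is nowhere identically zero. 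Equivalently one may invoke Theorem~\ref{thm:main_large_refined}(2) since $\max(k_\ell,k-k_r)=0$ and pin down $n=0$ from the smoothness.

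The only genuinely non-formal steps are the smoothness dichotomy for splines and, in part (a), the passage from the distributional identity $F=\lambda D^nN$ to $n=0$. The main obstacle I expect is careful bookkeeping rather than a conceptual difficulty: one must interpret \emph{non-negative distribution} as a positive Radon measure so that the vanishing-total-mass argument is legitimate, verify that $D^nN\not\equiv 0$ (true since a nonzero compactly supported B-spline is not a polynomial), and make sure the small values of $k$, where $C^{k-2}$ or $C^{k-1}$ degenerate below $C^0$, are read correctly in the smoothness comparisons.
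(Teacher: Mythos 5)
Your proposal is correct and follows essentially the route the paper intends: the paper offers no written proof beyond the remark that Theorem~\ref{thm:char} ``follows directly from Theorems~\ref{thm:main_small} and \ref{thm:main_large_refined}'', and your argument supplies exactly the missing details --- Lemma~\ref{lem:ellzero} plus Theorem~\ref{thm:main_large_refined}(1) to get $k_\ell=0$, $k_r=k$ and the form $\lambda_c D^nN(c_0,\ldots,c_k)$, the positive-measure/zero-total-mass argument to force $n=0$ in (a), and the exact-smoothness dichotomy for compactly supported splines of order $\rho$ (via Proposition~\ref{prop:sol}) for (b) and (c). The only points deserving a sentence in a final write-up are the justification that $\alpha_c\neq 0$ in (b) (which follows from $\ell\equiv 0$ via Lemma~\ref{lem:ellzero}, or by quoting Theorem~\ref{thm:main_large_refined}(2) as you note) and the degenerate reading of $C^{k-2}$, $C^{k-1}$ for $k=1$, which you already flag.
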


\appendix

\section{Proof of Proposition \ref{prop.non.uniq}}\label{sec:appendix}
\subsection{Some operations on $H$.}\label{ss.preparation}
For our construction of examples, we need two transformations of mappings $H$
satisfying (i)-(iv), or (i), (ii.e), (iii), (iv): increasing of dimension and
reflection (recall condition (ii.e) defined in Section~\ref{sss.v2}).
To describe them, fix $\kappa \geq 0$, and let $H$ be one of the following:
\begin{itemize}
\item[(a)] $H:E_\kappa \to \ccE'(\bR)$, satisfying (i), (ii.e), (iii), (iv).
\item[(b)] $H:P_\kappa \to \ccE'(\bR)$, satisfying (i)- (iv).
\end{itemize}
In particular, we are interested in the effect of these transformations to the
parameters
$$
k_{\ell}(H) = \min \supp H(0, \ldots, \kappa)
\quad \hbox{ and } \quad
k_r (H) = \max \supp H(0, \ldots, \kappa).
$$
\subsubsection{Reflection.}\label{sss.reflection}
Define
$$
J(a_0, \ldots, a_\kappa)(\cdot) = H(-a_\kappa, \ldots, -a_0)( - \cdot).
$$
Then:
\begin{itemize}
\item[(a.1)] In case $H$ is as in (a), then  $J:E_\kappa \to \ccE'(\bR)$ and satisfies (i), (ii.e), (iii), (iv).
\item[(b.1)] In case $H$ is as in (b), then  $J:P_\kappa \to \ccE'(\bR)$ and  satisfies (i)- (iv).
\end{itemize}
Moreover, in both cases
$$
k_{\ell}(J) = \kappa - k_r(H),
 \quad \quad
k_r(J) = \kappa - k_{\ell}(H).
$$
\subsubsection{Increasing of dimension.}\label{sss.dim}
Fix $k \geq \kappa$ and $\nu \in \bZ$ such that
$$
0 \leq \nu \leq \nu + \kappa \leq k.
$$
Define
$$
I(a_0, \ldots, a_k)(\cdot) = H(a_\nu, \ldots, a_{\nu +\kappa}).
$$
\begin{itemize}
\item[(a.2)] In case $H$ is as in (a), then  $I:E_k \to \ccE'(\bR)$ and satisfies (i), (ii.e), (iii), (iv).
\item[(b.2)] In case $H$ is as in (b), then  $I:P_k \to \ccE'(\bR)$ and  satisfies (i)- (iv).
\end{itemize}
Moreover, in both cases
$$
k_{\ell}(I) = \nu +  k_{\ell}(H),
 \quad \quad
k_r(I) = \nu +  k_{r}(H).
$$

\subsection{Extension of Example \ref{ex:counter}.}\label{ss.ext.counter}

Fix a subset $A \subset P_3$ satisfying the following conditions:
\begin{itemize}
\item[(c.1)] $A$ is translation and dilation invariant. 
\item[(c.2)]  Let $b_0<b_1 < b_2 < b_3  < b_4$. Then at most one of $(b_0, b_1, b_2, b_3)$, $(b_1, b_2, b_3, b_4)$ belongs to $A$.
\item[(c.3)] Splitting pattern for elements of $A$.

Let $(a_0, a_1,a_2,a_3) \in A$ and $a_0 < a < a_3$, $a \not \in \{a_1, a_2\}$. Let $\tilde{a}_0 < \tilde{a}_1 < \tilde{a}_2 < \tilde{a}_3 < \tilde{a}_4$
be an increasing rearrangement of $\{a_0, a_1,a_2,a_3, a\}$. Then:
\begin{itemize}
\item[(c.3.1)] If $a_0 < a < a_1$, then $(\tilde{a}_0 , \tilde{a}_1 , \tilde{a}_2 , \tilde{a}_3)  \not \in A$ and $( \tilde{a}_1 , \tilde{a}_2 , \tilde{a}_3, \tilde{a}_4)  \not \in A$. 
\item[(c.3.2)] If $a_1 < a < a_2$, then $(\tilde{a}_0 , \tilde{a}_1 , \tilde{a}_2 , \tilde{a}_3)  \not \in A$.
\item[(c.3.3)] If $a_2 < a < a_3$, then $(\tilde{a}_0 , \tilde{a}_1 , \tilde{a}_2 , \tilde{a}_3)   \in A$.

\end{itemize} 

\end{itemize}

We refer to Example \ref{ex:counter} for an explicit example of $A\subset P_3$
having those properties (note that in Example~\ref{ex:counter}, $A$ is defined
as a subset of $P_4$ but the conditions only involve the points
$(a_0,\ldots,a_3)$).

For $k \geq 4$ define 
$$
P_k (A) = \{ (a_0, \ldots, a_k) \in P_k: (a_0, a_1,a_2,a_3) \in A\}.
$$
Definition of  $P_k(A)$ and  conditions (c.1)-(c.3) for $A$ imply the following for $P_k(A)$:
\begin{itemize}
\item[(C.1)] $P_k(A)$ is translation and dilation invariant.
\item[(C.2)]  Let $b_0<\ldots < b_k < b_{k+1}$. Then at most one of $(b_0, \ldots, b_k)$, $(b_1, \ldots, b_{k+1})$ belongs to $P_k(A)$.
\item[(C.3)] Splitting pattern for elements of $P_k(A)$.

Let $(a_0,\ldots ,a_k) \in P_k( A)$ and $a_0 < a < a_k$, $a \neq a_i$, $i =1, \ldots, k-1$. Let $\tilde{a}_0 < \ldots < \tilde{a}_k < \tilde{a}_{k+1}$
be the increasing rearrangement of $\{a_0, \ldots ,a_k\} \cup \{ a\}$. Then:
\begin{itemize}
\item[(C.3.1)] If $a_0 < a < a_1$, then $(\tilde{a}_0 , \ldots , \tilde{a}_k)  \not \in P_k(A)$ and $( \tilde{a}_1 ,\ldots, \tilde{a}_{k+1})  \not \in P_k(A)$. 
\item[(C.3.2)] If $a_1 < a < a_2$, then $(\tilde{a}_0 , \ldots, \tilde{a}_k)  \not \in P_k(A)$.
\item[(C.3.3)] If $a_2 < a < a_k$, then $(\tilde{a}_0 , \ldots  , \tilde{a}_k)   \in P_k( A)$.

\end{itemize} 
\item[(C.4)]  Splitting pattern for elements of $P_k \setminus P_k(A)$. 

If $(a_0,\ldots ,a_k) \not\in P_k( A)$ and $a_3 < a < a_k$, then $(\tilde{a}_0 , \ldots  , \tilde{a}_k)  \not \in P_k( A)$.
\end{itemize}

Consider $F: P_k \to \ccE'(\bR)$ given by formula
\begin{align*}
	F(a_0, \ldots, a_k) &= N(a_2, a_4, \ldots, a_k)  \quad \text{if }  (a_0,
\ldots, a_k)  \in P_k(A), \\
F(a_0, \ldots, a_k) &= N(a_3, a_4, \ldots, a_k)  \quad \text{if }  (a_0,
\ldots, a_k)  \in P_k \setminus P_k(A).
\end{align*}
This family $F$ satisfies conditions (i)-(iv). Indeed, (i) is clear. Since
$P_k(A)$ is translation and dilation invariant,
the same is true for  $P_k\setminus P_k(A)$, hence (iii)-(iv) follow. 

It remains  to check the span condition (ii). 
For this, let $(a_0, \ldots, a_k) \in P_k$ and $a_0 < a < a_k$, $a \neq a_j$, $j =1, \ldots, k-1$, and let $\tilde{a}_0 < \tilde{a}_1 < \ldots < \tilde{a}_k< \tilde{a}_{k+1}$
be the increasing rearrangement of $\{a_0, a_1, \ldots, ,a_k, a\}$.
We need to check several cases.

 {\textsc{Case I}}: $(a_0, \ldots, a_k) \in P_k(A)$, so $F(a_0, \ldots, a_k) = N(a_2, a_4, \ldots, a_k)$.
We consider several sub-cases, according to the position of $a$ in the sequence $\tilde{a}_0 < \tilde{a}_1 < \ldots < \tilde{a}_k< \tilde{a}_{k+1}$.

\begin{itemize}
\item[(I.1)] $a_0<a<a_1$. Then $(\tilde{a}_0, \tilde{a}_1 ,\ldots , \tilde{a}_k) \not \in P_k(A)$ and 
$(\tilde{a}_1, \tilde{a}_2 ,\ldots , \tilde{a}_{k+1}) \not \in P_k(A)$, by (C.3.1). Therefore
\begin{eqnarray*}
F(\tilde{a}_0, \tilde{a}_1 ,\ldots , \tilde{a}_k) & = & N(\tilde{a}_3, \tilde{a}_4 ,\ldots , \tilde{a}_k) = N(a_2,  a_3, \ldots, a_{k-1}),
\\
F(\tilde{a}_1, \tilde{a}_2 ,\ldots , \tilde{a}_{k+1}) & = & N(\tilde{a}_4, \tilde{a}_5 ,\ldots , \tilde{a}_{k+1}) = N(a_3, a_4 , \ldots, a_{k}).
\end{eqnarray*}
Clearly, $N(a_2, a_4, \ldots, a_k) \in \lin\{ N(a_2,  a_3, \ldots, a_{k-1}), N(a_3, a_4,\ldots, a_{k})\}$.

\item[(I.2)] $a_1 < a < a_2$. Now, we have two possibilities.

If $(\tilde{a}_1, \tilde{a}_2 ,\ldots , \tilde{a}_{k+1}) \in P_k(A)$, then
$$
F(\tilde{a}_1, \tilde{a}_2 ,\ldots , \tilde{a}_{k+1}) = N(\tilde{a}_3, \tilde{a}_5, \ldots, \tilde{a}_{k+1}) = N(a_2, a_4, \ldots, a_k)
= F(a_0, \ldots, a_k) .
$$
Otherwise, we have $(\tilde{a}_1, \tilde{a}_2 ,\ldots , \tilde{a}_{k+1}) \not \in P_k(A)$, and $(\tilde{a}_0, \tilde{a}_1 ,\ldots , \tilde{a}_{k})  \not\in P_k(A)$
by (C.3.2). Therefore
\begin{eqnarray*}
F(\tilde{a}_0, \tilde{a}_1 ,\ldots , \tilde{a}_k) & = & N(\tilde{a}_3, \tilde{a}_4 ,\ldots , \tilde{a}_k) = N(a_2,  a_3, \ldots, a_{k-1}),
\\
F(\tilde{a}_1, \tilde{a}_2 ,\ldots , \tilde{a}_{k+1}) & = & N(\tilde{a}_4, \tilde{a}_5 ,\ldots , \tilde{a}_{k+1}) = N(a_3, a_4 , \ldots, a_{k}),
\end{eqnarray*}
and $N(a_2, a_4, \ldots, a_k) \in \lin\{ N(a_2,  a_3, \ldots, a_{k-1}), N(a_3, a_4,\ldots, a_{k})\}$.

\item[(I.3)] $a_2 < a < a_3$. Then $(\tilde{a}_0, \tilde{a}_1 ,\ldots , \tilde{a}_k) \in P_k(A)$, because of (C.3.3), and $(\tilde{a}_1, \tilde{a}_2 ,\ldots , \tilde{a}_{k+1}) \not \in P_k(A)$ because of (C.2). Consequently
\begin{eqnarray*}
F(\tilde{a}_0, \tilde{a}_1 ,\ldots , \tilde{a}_k) & = & N(\tilde{a}_2, \tilde{a}_4 ,\ldots , \tilde{a}_k) = N(a_2,  a_3, \ldots, a_{k-1}),
\\
F(\tilde{a}_1, \tilde{a}_2 ,\ldots , \tilde{a}_{k+1}) & = & N(\tilde{a}_4, \tilde{a}_5 ,\ldots , \tilde{a}_{k+1}) = N(a_3, a_4 , \ldots, a_{k}),
\end{eqnarray*}
and $N(a_2, a_4, \ldots, a_k) \in \lin\{ N(a_2,  a_3, \ldots, a_{k-1}), N(a_3, a_4,\ldots, a_{k})\}$. 

\item[(I.4)] $a_3 < a < a_k$.  Similarly as in case (I.3), $(\tilde{a}_0, \tilde{a}_1 ,\ldots , \tilde{a}_k) \in P_k(A)$, because of (C.3.3), and $(\tilde{a}_1, \tilde{a}_2 ,\ldots , \tilde{a}_{k+1}) \not \in P_k(A)$ because of (C.2). Therefore
\begin{eqnarray*}
F(\tilde{a}_0, \tilde{a}_1 ,\ldots , \tilde{a}_k) & = & N(\tilde{a}_2, \tilde{a}_4 ,\ldots , \tilde{a}_k) ,
\\
F(\tilde{a}_1, \tilde{a}_2 ,\ldots , \tilde{a}_{k+1}) & = & N(\tilde{a}_4, \tilde{a}_5 ,\ldots , \tilde{a}_{k+1}) .
\end{eqnarray*}
Note that  $\{  \tilde{a}_4, \ldots, \tilde{a}_k, \tilde{a}_{k+1}\} = \{a_4,
\ldots, a_k\} \cup \{a\}$, and consequently also we have
 $\{ \tilde{a}_2, \tilde{a}_4, \ldots, \tilde{a}_k, \tilde{a}_{k+1}\} = \{a_2, a_4, \ldots, a_k\} \cup \{a\}$. 
 Therefore  we have
$N(a_2, a_4, \ldots, a_k) \in \lin\{ N(\tilde{a}_2, \tilde{a}_4 ,\ldots , \tilde{a}_k), N(\tilde{a}_4, \tilde{a}_5 ,\ldots , \tilde{a}_{k+1}) \}$.
\end{itemize}

 {\textsc{Case II}: $(a_0, \ldots, a_k) \not\in P_k(A)$, so $F(a_0, \ldots, a_k) = N(a_3, a_4, \ldots, a_k)$.
Similarly as in case I, we consider some sub-cases, according to the position of $a$ in the sequence $\tilde{a}_0 < \tilde{a}_1 < \ldots < \tilde{a}_k< \tilde{a}_{k+1}$.
\begin{itemize}
\item[(II.1)] $a_0 < a < a_3$. 

If $ (\tilde{a}_1, \tilde{a}_2 ,\ldots , \tilde{a}_{k+1}) \not \in P_k(A)$, then
$$
F(\tilde{a}_1, \tilde{a}_2 ,\ldots , \tilde{a}_{k+1}) = N(\tilde{a}_4, \tilde{a}_5 ,\ldots , \tilde{a}_{k+1})
= N(a_3, a_4, \ldots, a_k) = F(a_0, \ldots, a_k) .$$

If $ (\tilde{a}_1, \tilde{a}_2 ,\ldots , \tilde{a}_{k+1})  \in P_k(A)$, then -- because of (C.2) --  $ (\tilde{a}_0, \tilde{a}_1 ,\ldots , \tilde{a}_{k}) \not \in P_k(A)$. Then
\begin{eqnarray*}
F(\tilde{a}_0, \tilde{a}_1 ,\ldots , \tilde{a}_k) & = & N(\tilde{a}_3, \tilde{a}_4 ,\ldots , \tilde{a}_{k}) ,
\\
F(\tilde{a}_1, \tilde{a}_2 ,\ldots , \tilde{a}_{k+1}) & = & N(\tilde{a}_3, \tilde{a}_5 ,\ldots , \tilde{a}_{k+1}) .
\end{eqnarray*}
Observe that $$\lin\{ N(\tilde{a}_3, \tilde{a}_4 ,\ldots , \tilde{a}_{k}) , N(\tilde{a}_3, \tilde{a}_5 ,\ldots , \tilde{a}_{k+1})  \} =
\lin\{ N(\tilde{a}_3, \tilde{a}_4 ,\ldots , \tilde{a}_{k}) , N(\tilde{a}_4, \tilde{a}_5 ,\ldots , \tilde{a}_{k+1}) \},$$
and in  case under consideration
$N(a_3, a_4, \ldots, a_k) =  N(\tilde{a}_4, \tilde{a}_5 ,\ldots ,
\tilde{a}_{k+1})$. So the span condition follows.

\item[(II.2)] $a_3 < a  <  a_k$. Then $(\tilde{a}_0, \tilde{a}_1 ,\ldots , \tilde{a}_k) \not\in P_k(A)$ by (C.4),
so 
$$
F(\tilde{a}_0, \tilde{a}_1 ,\ldots , \tilde{a}_k)  =  N(\tilde{a}_3, \tilde{a}_4 ,\ldots , \tilde{a}_{k}) .
$$
If $(\tilde{a}_1, \tilde{a}_2 ,\ldots , \tilde{a}_{k+1}) \not\in P_k(A)$, then
$$
F(\tilde{a}_1, \tilde{a}_2 ,\ldots , \tilde{a}_{k+1})  =  N(\tilde{a}_4, \tilde{a}_5 ,\ldots , \tilde{a}_{k+1}) .
$$
If $(\tilde{a}_1, \tilde{a}_2 ,\ldots , \tilde{a}_{k+1}) \in P_k(A)$, then
$$
F(\tilde{a}_1, \tilde{a}_2 ,\ldots , \tilde{a}_{k+1})  =  N(\tilde{a}_3, \tilde{a}_5 ,\ldots , \tilde{a}_{k+1}) .
$$
In both cases
$$
\lin\{  F(\tilde{a}_0, \tilde{a}_1 ,\ldots , \tilde{a}_k) , F(\tilde{a}_1, \tilde{a}_2 ,\ldots , \tilde{a}_{k+1})  \}
= \lin\{ N(\tilde{a}_3, \tilde{a}_4 ,\ldots , \tilde{a}_{k}), N(\tilde{a}_4, \tilde{a}_5 ,\ldots , \tilde{a}_{k+1}) \}.
$$
Note that $\{\tilde{a}_3, \tilde{a}_4 ,\ldots , \tilde{a}_{k}, \tilde{a}_{k+1}  \} = \{a_3, \ldots, a_k\} \cup \{a\}$. It follows that
\begin{eqnarray*}
F(a_0, \ldots, a_k)  & = & N(a_3, a_4, \ldots, a_k) 
\\
& \in &  \lin\{ N(\tilde{a}_3, \tilde{a}_4 ,\ldots , \tilde{a}_{k}), N(\tilde{a}_4, \tilde{a}_5 ,\ldots , \tilde{a}_{k+1}) \}
\\ & = & \lin\{  F(\tilde{a}_0, \tilde{a}_1 ,\ldots , \tilde{a}_k) , F(\tilde{a}_1, \tilde{a}_2 ,\ldots , \tilde{a}_{k+1})  \}.
\end{eqnarray*}
\end{itemize}

\subsection{The proof of Proposition \ref{prop.non.uniq}.} \label{ss.proof}
The example discussed in section \ref{ss.ext.counter} implies that $F$ has more that one extension to $P_k$ in case of $m=0$, $k \geq 4$, 
$k_\ell =3$ and $k_r = k$. It can be extended to $3 \leq k_\ell < k_r \leq k$ by increasing of dimension procedure, described in section \ref{sss.dim}.
Differentiating this example $m$ times we get more than one extension to $P_k$ in case of $m \geq 0$, $k \geq 4$ and $3 \leq k_\ell < k_r \leq k$.
Finally, the reflection procedure, described in section \ref{sss.reflection} allows the change of roles of $k_\ell$ and $k_r$.

\subsection*{Acknowledgements } 
M. Passenbrunner is  supported by the Austrian Science Fund FWF, project P32342.

\bibliographystyle{plain}
\bibliography{char}

\begin{thebibliography}{1}

\bibitem{refinable_anal}
Xin-Rong Dai.
\newblock Compactly supported multi-refinable distributions and {B}-splines.
\newblock {\em J. Math. Anal. Appl.}, 323(1):379--386, 2006.

\bibitem{refinable_east}
Xinrong Dai, Qiyu Sun, and Zeyin Zhang.
\newblock Compactly supported both {$m$} and {$n$} refinable distributions.
\newblock {\em East J. Approx.}, 6(2):201--209, 2000.

\bibitem{uncond_franklin}
Gegham~G. Gevorkyan and Anna Kamont.
\newblock Unconditionality of general {F}ranklin systems in {$L^p[0,1],
  1<p<\infty$}.
\newblock {\em Studia Math.}, 164(2):161--204, 2004.

\bibitem{uncond}
M.~Passenbrunner.
\newblock Unconditionality of orthogonal spline systems in {$L^p$}.
\newblock {\em Studia Math.}, 222(1):51--86, 2014.

\bibitem{ae}
M.~Passenbrunner and A.~Shadrin.
\newblock On almost everywhere convergence of orthogonal spline projections
  with arbitrary knots.
\newblock {\em J. Approx. Theory}, 180:77--89, 2014.

\bibitem{Schumaker2007}
L.~L. Schumaker.
\newblock {\em Spline functions: basic theory}.
\newblock Cambridge Mathematical Library. Cambridge University Press,
  Cambridge, {T}hird edition, 2007.

\bibitem{shadrin}
A.~Yu. Shadrin.
\newblock The {$L_\infty$}-norm of the {$L_2$}-spline projector is bounded
  independently of the knot sequence: a proof of de {B}oor's conjecture.
\newblock {\em Acta Math.}, 187(1):59--137, 2001.

\bibitem{refinable_approx}
Qiyu Sun and Zeyin Zhang.
\newblock A characterization of compactly supported both {$m$} and {$n$}
  refinable distributions.
\newblock {\em J. Approx. Theory}, 99(1):198--216, 1999.

\end{thebibliography}

\end{document}